\theoremstyle{definition}
\newtheorem{definition}{Definition}[section]
\newtheorem{lemdef}[definition]{Lemma \& Definition}
\newtheorem{construction}[definition]{Construction}
\newtheorem{subconstruction}{Subconstruction}[definition]
\theoremstyle{theorem}
\newtheorem{theorem}[definition]{Theorem}
\newtheorem{corollary}[definition]{Corollary}
\newtheorem{lemma}[definition]{Lemma}
\newtheorem{proposition}[definition]{Proposition}
\newtheorem{example}[definition]{Example}
\newtheorem{Remark}[definition]{Remark}
\newtheorem{claim}[definition]{Claim}
\newtheorem*{remark}{Remark}
\newtheorem*{ex}{Example}
\theoremstyle{theorem}
\DeclareMathOperator{\supp}{supp}
\newcommand{\bs}[1]{\supp\left({#1}\right)}
\newcommand{\sijto}{\mathrel{\circ \hspace{-1.8mm} = \hspace{-2.2mm} > \hspace{-2.2mm} = \hspace{-1.8mm} \circ}}
\newcommand{\nxarrow}{{\nwarrow \hspace{-1em} \nearrow}}
\newcommand{\expnxarrow}{{\nwarrow \hspace{-0.8em} \nearrow}}
\newcommand{\sxarrow}{{\searrow \hspace{-1em} \swarrow}}
\newcommand{\expsxarrow}{{\searrow \hspace{-0.8em} \swarrow}}
\newcommand{\sint}[2]{\underline{\left[{#1},{#2}\right)}}
\newcommand{\insupp}{\overset{\circ}{\in}}
\renewcommand{\qed}{\unskip\nobreak\quad\qedsymbol}
\begin{document}

\title{
New bijective proofs pertaining to alternating sign matrices
}
\author{
Takuya Inoue
\footnote{
Graduate School of Mathematical Sciences, the University of Tokyo,
3-8-1 Komaba, Meguro-ku, 153-8914 Tokyo, Japan.
Email: inoue@ms.u-tokyo.ac.jp
}
}
\date{}
\maketitle

\begin{abstract}
The alternating sign matrices-descending plane partitions (ASM-DPP) bijection problem is one of the most intriguing open problems in bijective combinatorics, which is also relevant to integrable combinatorics.
The notion of a signed set and a signed bijection is used in [Fischer, I. \& Konvalinka, M., Electron. J. Comb., 27 (2020) 3-35.] to construct a bijection between $\text{ASM}_n \times \text{DPP}_{n-1}$ and $\text{DPP}_n \times \text{ASM}_{n-1}$.
Here, we shall construct a more natural alternative to a signed bijection between alternating sign matrices and shifted Gelfand-Tsetlin patterns which is presented in that paper, based on the notion of compatibility which we introduce to measure the naturalness of a signed bijection.
In addition, we give a bijective proof for the refined enumeration of an extension of alternating sign matrices with $n+3$ statistics, first proved in [Fischer, I. \& Schreier-Aigner, F., Advances in Mathematics 413 (2023)  108831.].
\end{abstract}

\tableofcontents

\section{Introduction}
An alternating sign matrix with rank $n$ is defined as an $n \times n$ matrix that satisfies the conditions:
\begin{itemize}
\item each entry is $-1$, $0$ or $1$,
\item for each row and column the sum is $1$,
\item for each row and column the nonzero entries alternate in sign.
\end{itemize}
Below is an example of a rank $4$ alternating sign matrix:
\begin{equation} \label{intro::ex}
	\begin{pmatrix}
	0 & 0 & 1 & 0 \\
	1 & 0 & -1 & 1 \\
	0 & 1 & 0 & 0 \\
	0 & 0 & 1 & 0
	\end{pmatrix}.
\end{equation}
Combinatorics concerning alternating sign matrices provides one of the most intriguing open problems in bijective combinatorics, which is called the ASM-DPP bijection problem \cite{Francesco1,Stan}.
Here, DPP stands for descending plane partitions, which are plane partitions subject to some conditions and equipped with the notion of rank.
It has been known for some time that for any $n \in \mathbb{Z}_{>0}$ the number of alternating sign matrices with rank $n$ is equal to that of descending plane partitions with rank $n$
(conjectured in 1983 by W. H. Mills, David P. Robbins and Howard Rumsey, Jr. \cite{MRR}, and proved in 1996 first by Doron Zeilberger \cite{Zeil} and then by Greg Kuperberg \cite{Kupe} independently).
However, no explicit bijections between them have been found so far.

Ilse Fischer and Matja\v{z} Konvalinka use the notions of a signed set and a signed bijection (which we shall call a \textit{sijection} in the style of Fischer and Konvalinka) to tackle the problem in \cite{FK1,FK2}.
These are defined as follows:
\begin{itemize}
\item A signed set is a pair of disjoint finite sets,
\item A sijection from a signed set $S=(S^+,S^-)$ to a signed set $T=(T^+,T^-)$ is a bijection between $S^{+} \sqcup T^{-}$ and $S^{-} \sqcup T^{+}$.
\end{itemize}
For more details, see Section \ref{sec::prel}.
Fischer and Konvalinka did not construct the desired bijection, but they did construct a bijection between $\text{ASM}_n \times \text{DPP}_{n-1}$ and $\text{DPP}_n \times \text{ASM}_{n-1}$, where $\text{ASM}_n$ and $\text{DPP}_n$ are the set of alternating sign matrices and descending plane partitions with rank $n$, respectively.
In Section \ref{secofSGT}, we will investigate the sijections constructed in \cite{FK1}, between alternating sign matrices and so-called shifted Gelfand-Tsetlin patterns, and obtain some new combinatorial results.
A shifted Gelfand-Tsetlin pattern is a combinatorial object defined in \cite{FK1} to clarify the combinatorial meaning of the operator formula for the enumeration of alternating sign matrices.
For more details, see Section \ref{secofSGT}.

Before explaining our method of investigation, let us introduce some other previous works on the ASM-DPP bijection problem, done by Roger E. Behrend, Philippe Di Francesco and Paul Zinn-Justin \cite{BFZ1,BFZ2}.
Their research concerns refined enumerations of these two objects, and they prove that the two refined enumerations with respect to certain quadruplets of statistics coincide with each other. Note that the results on three of these statistics were conjectured by W. H. Mills, David P Robbins and Howard Rumsey Jr. \cite{MRR}
The statistics on alternating sign matrices they consider are:
\begin{itemize}
\item the column number of the $1$ in the first row (note that only one $1$ exists in that row),
\item the (generalized) inversion number (for a detailed definition see Section \ref{secofSGT}),
\item the number of $-1$ in the matrix,
\item the number of $0$'s to the right of the $1$ in the last row
\end{itemize}
For example, the statistics are $3$, $2$, $1$ and $1$ of the example (\ref{intro::ex}).

The main idea of the present paper is the notion of compatibility, which is defined as follows:
\begin{quote}
	A sijection $\varphi$ from $S=(S^+,S^-)$ to $T=(T^+,T^-)$ is \textit{compatible} with a statistic function $\eta \colon S^+ \sqcup S^- \sqcup T^+ \sqcup T^- \to \mathbb{Z}^d$
	if it holds that:
	\[
		\forall s \in S^+ \sqcup T^-,\, \eta(\varphi(s)) = \eta(s).
	\]
\end{quote}
This means that the statistic $\eta$ is preserved under the action of $\varphi$.
In Section \ref{secofGT}, we interpret the sijections relevant to Gelfand-Tsetlin patterns that are constructed in \cite{FK1} in the light of the notion of compatibility.
This interpretation gives us some new knowledge about these objects.
With the notion of integrable systems of combinatorial objects, which will also be defined in that section, we will describe the combinatorial structure of Gelfand-Tsetlin patterns.
This result leads us to a new computational proof of the enumeration of Gelfand-Tsetlin patterns with a bottom row that is not strictly increasing.
Furthermore, inspired by this new proof, we obtain a new generalization of Gelfand-Tsetlin patterns and a signed enumeration of the objects.

The notion of compatibility also helps us to import Di Francesco et al.'s results to Fischer and Konvalinka's work.
We know the meaningful statistics according to the refined enumerations, so with these statistics and the notion of compatibility, we are able to measure the naturalness of constructed sijections.
In Section \ref{secofSGT}, we will extend the definition of two of the four statistics to shifted Gelfand-Tsetlin patterns and discuss the compatibility with these two statistics.
In fact, Fischer and Konvalinka's construction is not compatible with one of these statistics.
We will construct a new sijection between the two objects which is compatible with both of the statistics.
Therefore, according to the notion of compatibility, our construction can be considered more natural than Fischer and Konvalinka's.

In fact, in \cite{AF} Fischer and Schreier-Aigner obtain a more elaborate result to do with a refined enumeration of alternating sign matrices and descending plane partitions.
Amongst others, they provide a refined enumeration of some extensions of alternating sign matrices with $n+3$ statistics.
With a slight modification, this result can be recognized as a refined enumeration of generalized monotone triangles, which is defined in Section \ref{secofSGT}.
It is expressed as follows (for detailed definitions, see Section \ref{secofSGT}):
\[
	\prod_{i=1}^n \left( uX_i + vX_i^{-1} + w \right) \prod_{1 \leq p < q \leq n} \left( u E_{k_p} + v E_{k_q}^{-1} + w E_{k_p} E_{k_q}^{-1} \right) \tilde{s}_{(k_n,k_{n-1},\ldots,k_1)}(X_1,X_2,\ldots,X_n).
\]
We recognize this expression as a refined enumeration of the Cartesian product of arrow rows and shifted Gelfand-Tsetlin patterns, where arrow rows will be defined in Section \ref{secofSGT}.
Then, we give a bijective proof for the refined enumeration of generalized monotone triangles by appropriately defining the $n+3$ statistics on the Cartesian product and constructing a sijection between generalized monotone triangles and the Cartesian product that is compatible with these statistics. For more details, see Subsection \ref{ssec::gmt-ar_sgt}.

As a last remark in this section we explain the choice of title captions in this paper, in which we construct many sijections.
On the one hand, such constructions constitute proof of the existence of the sijections, but  on the other hand the constructions themselves also become objects that will be used in later propositions and/or theorems.
Therefore, not only the existence but also the explicit construction of each sijection is important.
Thus, when we describe a construction of a sijection in detail we will use ``Construction'' as a title caption instead of  ``Proposition'', ``Theorem'' or ``Proof''.
This convention is partially derived from Fischer and Konvalinka's papers \cite{FK1,FK2}.

\section{Preliminaries}\label{sec::prel}
\subsection{Signed Sets}
A \textit{signed set} is a pair of disjoint finite sets.
For a signed set $S$, we call its first (resp. second) element the \textit{plus part} (resp. \textit{minus part}) of $S$ and denote it by $S^{+}$ (resp. $S^{-}$).
Namely, $S=(S^{+},S^{-})$.
For a signed set $S$, we call the set $S^{+} \sqcup S^{-}$ the support of $S$ and denote it by $ \bs{S} $.
The \textit{size} of a signed set $S$ is defined by $ \#S= \#S^{+} - \#S^{-}$, where we denote the size of a set $X$ by $\#X$.

Next, we define some basic notions relevant to signed sets. (See also \cite{FK1}.)
\begin{itemize}
\item The \textit{opposite} of a signed set $S$ is $-S := (S^-,S^+)$.
\item The \textit{disjoint union} of two signed sets $S$ and $T$ is $S \sqcup T := (S^{+} \sqcup T^{+},S^{-} \sqcup T^{-})$.
\item The \textit{Cartesian product} of two signed sets $S$ and $T$ is \[S \times T := (S^{+} \times T^{+} \sqcup S^{-} \times T^{-},S^{+} \times T^{-} \sqcup S^{-} \times T^{+}).\]
\end{itemize}
We define the disjoint union and the Cartesian product of a finite number of signed sets in the same manner.
In addition, we can define a \textit{disjoint union with signed index} of a family of signed sets indexed by a signed set (to be rigorous, indexed by the support of the signed set) in the following way.
\begin{definition}
	Let $T$ be a signed set and $\{S_t\}_{t \in \bs{T}}$ a family of signed sets.
	The disjoint union with signed index in $T$ of $\{S_t\}_{t \in \bs{T}}$ is
	\[
		\bigsqcup_{t \in T} S_t := \left( \bigsqcup_{t \in T^{+}}(S_t^+) \sqcup \bigsqcup_{t \in T^{-}}(S_t^-), \bigsqcup_{t \in T^{+}}(S_t^-) \sqcup \bigsqcup_{t \in T^{-}}(S_t^+) \right).
	\]
\end{definition}
We denote an element of $\bigsqcup_{t \in T} S_t$ as $(s_t,t)$ for $s_t \in \bs{S_t}$.
For simplicity, we will sometimes denote an element of $\bigsqcup_{u \in U} \bigsqcup_{t \in T} S_{t,u}$ as $(s_{t,u},t,u)$ instead of $((s_{t,u},t),u)$.

Let us define a \textit{signed interval}, which is the most basic example of a signed set.
For any two integers $a$ and $b$, a signed interval $\sint{a}{b}$ is defined by,
\[
	\sint{a}{b}=\begin{cases}
		([a,b)\cap\mathbb{Z},\emptyset) & (a < b), \\
		(\emptyset,\emptyset) & (a=b), \\
		(\emptyset,[b,a)\cap\mathbb{Z}) & (a>b).
	\end{cases}
\]
It is noteworthy that we use half-open intervals to describe signed intervals instead of closed intervals as in \cite{FK1,FK2}.
Because of this, we have $\sint{b}{a}=-\sint{a}{b}$.
This relation is crucial and many properties of Gelfand-Tsetlin patterns are easier to establish using this notation.
For more information, see Section \ref{secofGT}.

\subsection{Sijections}
A \textit{sijection} $ \varphi $ from a signed set $S$ to a signed set $T$ is an involution on $\bs{S} \sqcup \bs{T}$ such that $ \varphi(S^{+} \sqcup T^{-}) = S^{-} \sqcup T^{+}$,
namely a sijection $ \varphi $ is a bijection between $S^{+} \sqcup T^{-}$ and $S^{-} \sqcup T^{+}$.
We denote it by $S \sijto T$.
If there is a sijection from $S$ to $T$, then $\#S = \#T$ holds.
This relation is an analogy of the relation between ordinary sets and a bijection.
In particular, if $ S^{-}=T^{-}=\emptyset $, then we can interpret $S$ and $T$ as ordinary sets and a sijection between them as an ordinary bijection.

A sijection $ \varphi $ can be also recognized as a triplet of a sign-preserving bijection from a subset of $S$ to a subset of $T$, a sign-reversing involution on the remaining part of $S$, and a sign-reversing involution on the remaining part of $T$.
Figure \ref{illust_sij} below illustrates this interpretation of a sijection.
In the figure, the upper-left square represents the set $S^{+}$, and similarly for the other three squares.
The symbol $\sijto$ was inspired by Figure \ref{illust_sij}.

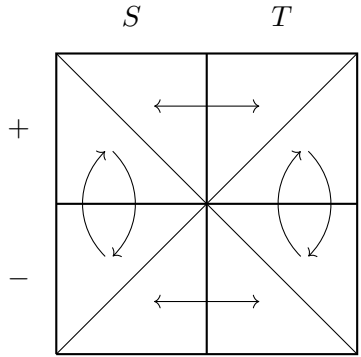
\begin{figure}[h]
\centering
\begin{tikzpicture}
\draw[thick] (0,0) -- (4,0) -- (4,4) -- (0,4) -- (0,0);
\draw[thick] (0,2) -- (4,2);
\draw[thick] (2,0) -- (2,4);
\draw (0,0) -- (4,4);
\draw (4,0) -- (0,4);
\node (S) at (1,4.5) {$S$};
\node (T) at (3,4.5) {$T$};
\node (+) at (-0.5,3) {$+$};
\node (-) at (-0.5,1) {$-$};
\draw[<->] (1.3,3.3) -- (2.7,3.3);
\draw[<->] (1.3,0.7) -- (2.7,0.7);
\draw[->] (0.75,2.7) .. controls (1.15,2.3) and (1.15,1.7) .. (0.75,1.3);
\draw[->] (0.65,1.3) .. controls (0.25,1.7) and (0.25,2.3) .. (0.65,2.7);
\draw[->] (3.35,2.7) .. controls (3.75,2.3) and (3.75,1.7) .. (3.35,1.3);
\draw[->] (3.25,1.3) .. controls (2.85,1.7) and (2.85,2.3) .. (3.25,2.7);
\end{tikzpicture}
\caption{An illustration of a sijection.}
\label{illust_sij}
\end{figure}

Since a sijection $\varphi \colon S \sijto T$ is an involution, we can use the notion ``$\varphi(s)$'' for an element $s$ not only of $S$ but also of $T$.
On the other hand, it is sometimes important to distinguish between the domain and the codomain, especially when we consider a composition of sijections.
Thus, we define an inversion $\varphi^{-1} \colon T \sijto S$ of $\varphi$ by
\[
	\varphi^{-1} = \varphi \quad (\textrm{as an involution}),
\]
and distinguish $\varphi^{-1}$ from $\varphi$ as a sijection.

The simplest example of a sijection is the \textit{identity sijection} $\textrm{id}_S$, defined by
\[\begin{tikzcd}[row sep=tiny]
S \arrow[r,phantom, "\sijto"] 	& S \\
s \arrow[r,leftrightarrow]		& s
\end{tikzcd}\]
for any signed set $S$. Let us introduce a few more examples which we shall use in later sections.
%
\begin{example}\label{ex_sij}
For any integers $a$, $b$ and $c$, there exists a sijection $\varphi \colon \sint{a}{b} \sijto \sint{a}{c} \sqcup \sint{c}{b}$.
In fact, each integer in $[\min \{a,b,c\}, \max \{a,b,c\}) \cup \mathbb{Z}$ appears twice as an element, and when we let one correspond to the other, we obtain the sijection.
\end{example}

\begin{example}
Let $A$, $B$ be ordinary sets and $f \colon A \to B$ a bijection.
In this situation, we can interpret $f$ as a sijection between $S=(A,B)$ and $(\emptyset,\emptyset)$.
\end{example}

\begin{example}\label{sij_oppo}
Let $S$, $T$ be signed sets and $\varphi \colon S \sijto T$ a sijection.
Then, we have a sijection $S \sqcup -T \sijto (\emptyset,\emptyset)$.
In fact, $\varphi$ is also a bijection between $S^+ \sqcup T^-$ and $S^- \sqcup T^+$.
Therefore, this is also a sijection between $(S^+ \sqcup T^-, S^- \sqcup T^+)=S \sqcup -T$ and $(\emptyset,\emptyset)$ by the previous example.
Conversely, a sijection between $S \sqcup -T$ can be interpreted as a sijection between $S$ and $T$.
\end{example}
In particular, we have $$S \sqcup -S \sijto (\emptyset,\emptyset)$$ derived from the identity sijection on $S$ for any signed set $S$.
\subsubsection{Composition of sijections}
A composition of sijections is defined as follows \cite{FK1}.
\begin{lemdef}[{\cite[Proposition 2 (1)]{FK1}}]\label{lem:comp}
Let $S$, $T$, $U$ be signed sets and $ \varphi \colon S \sijto T $, $ \psi \colon T \sijto U $ sijections.
For $ s \in \bs{S} $, we define $ \psi \circ \varphi(s) $ as the last well-defined element in the sequence
\[
	s,\, \varphi(s),\, \psi(\varphi(s)),\, \varphi(\psi(\varphi(s))),\, \ldots.
\]
Similarly, for $ u \in \bs{U} $, we define $ \psi \circ \varphi(u) $ as the last well-defined element in the sequence
\[
	u,\, \psi(u),\, \varphi(\psi(u)),\, \psi(\varphi(\psi(u))),\, \ldots.
\]
Then, $ \psi \circ \varphi $ is a sijection from $S$ to $U$, and we call it the composition of $ \varphi $ and $ \psi $.
\end{lemdef}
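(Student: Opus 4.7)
The plan is to interpret $\varphi$ and $\psi$ jointly as defining a graph $G$ on the vertex set $\bs{S} \sqcup \bs{T} \sqcup \bs{U}$, whose edges are the pairs $\{x,\varphi(x)\}$ for $x \in \bs{S} \sqcup \bs{T}$ together with the pairs $\{y,\psi(y)\}$ for $y \in \bs{T} \sqcup \bs{U}$. Both $\varphi$ and $\psi$ are fixed-point-free, since $S^{+} \sqcup T^{-}$ and $S^{-} \sqcup T^{+}$ are disjoint (and similarly for $\psi$), so these pairs are genuine two-element edges. Under the convention that the three supports are mutually disjoint (taking formal disjoint copies if necessary), each vertex of $\bs{S}$ or $\bs{U}$ lies on exactly one edge (a $\varphi$- or $\psi$-edge respectively), while each vertex of $\bs{T}$ lies on exactly two. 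Consequently $G$ decomposes as a disjoint union of simple paths whose two endpoints lie in $\bs{S} \sqcup \bs{U}$ and whose interior vertices all lie in $\bs{T}$, together with some cycles contained entirely inside $\bs{T}$.

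With this picture, the sequence $s,\,\varphi(s),\,\psi(\varphi(s)),\,\ldots$ is simply the traversal of the unique path through $s \in \bs{S}$, starting at $s$ and walking along alternating $\varphi$- and $\psi$-edges to the other endpoint; termination is automatic from finiteness of $G$. The symmetric description starting from $u \in \bs{U}$ is nothing but the same path traversed in the opposite direction, so the two prescriptions are consistent and together produce a well-defined involution $\psi \circ \varphi$ on $\bs{S} \sqcup \bs{U}$.

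What remains, and what I expect to be the main (though still routine) bookkeeping step, is to verify the sign condition $(\psi \circ \varphi)(S^{+} \sqcup U^{-}) = S^{-} \sqcup U^{+}$. I would track the sign pattern along a path $v_0,\,v_1,\,\ldots,\,v_n$ starting at $v_0 \in S^{+}$: applying $\varphi$ forces $v_1 \in S^{-} \sqcup T^{+}$, and since interior vertices lie in $\bs{T}$ this gives $v_1 \in T^{+}$; next $\psi$ places $v_2 \in T^{-}$; then $\varphi$ gives $v_3 \in T^{+}$; and so on, with the interior vertices alternating $T^{+}, T^{-}, T^{+}, T^{-}, \ldots$. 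Examining the last edge then places $v_n$ either in $S^{-}$ (when it is a $\varphi$-edge, so $n$ odd) or in $U^{+}$ (when it is a $\psi$-edge, so $n$ even), hence $v_n \in S^{-} \sqcup U^{+}$; the analogous walk starting from $U^{-}$ lands in the same target set. I expect no serious obstacle here: the argument reduces to an elementary alternation check, with the only care required being the consistent handling of path-length parities and keeping track of endpoint versus interior vertices.
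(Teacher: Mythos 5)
Your proposal is correct and takes essentially the same route as the paper: the paper's own proof also builds the bipartite graph on $\bs{S} \sqcup \bs{T} \sqcup \bs{U}$ with $\varphi$-edges and $\psi$-edges, decomposes it into paths (with endpoints in $\bs{S} \sqcup \bs{U}$) and cycles, identifies $\psi \circ \varphi$ with walking to the other endpoint, and reads off the sign condition from the parity of the path length via the bipartition into $S^{+} \sqcup T^{-} \sqcup U^{+}$ and $S^{-} \sqcup T^{+} \sqcup U^{-}$. Your explicit alternation check along a path is just this bipartiteness argument spelled out vertex by vertex, so the two proofs coincide in substance.
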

First we should clarify what we mean by \textit{``the last well-defined element in the sequence \\
$ s,\, \varphi(s),\, \psi(\varphi(s)),\, \varphi(\psi(\varphi(s))),\, \ldots $''}.
For example if $\varphi(s)$ belongs to $\bs{S}$, then $\psi(\varphi(s))$ is not defined because the domain of $\psi$ is $\bs{T} \sqcup \bs{U}$.
Therefore, $ (\psi \circ \varphi)(s) $ is $\varphi(s)$ in this case.
For the original proof of the lemma, see \cite{Doyle}.
Here, we give an alternative proof of this fact using the language of graphs.
A graph in the proof might have multiple edges, so we fix notations relevant to multisets.
We use double braces to describe a multiset, and use the symbol ``$+$'' to describe a sum of multisets.
\begin{proof}[Proof of Lemma \& Definition \ref{lem:comp}]
Consider a graph with vertices $V=\bs{S} \sqcup \bs{T} \sqcup \bs{U}$ and edges $E=R+B$, where
\begin{align*}
	R&=\{\!\{\, \{v,\varphi(v)\} \mid v \in S^{+} \sqcup T^{-}\,\}\!\}, \\
	B&=\{\!\{\, \{v,\psi(v)\} \mid v \in T^{+} \sqcup U^{-}\,\}\!\}.
\end{align*}
Note that the graph is an undirected bipartite finite graph, where one part is $S^{+} \sqcup T^{-} \sqcup U^{+}$ and the other part is $S^{-} \sqcup T^{+} \sqcup U^{-}$.
We consider painting edges in R with red and those in B with blue.
Then, by the definitions, each vertex belongs to at most one red edge and at most one blue edge.
Because of the degrees of the vertices and the finiteness of the graph, the graph consists of finitely many line graphs and cycles.
Moreover, all the endpoints of these line graphs belong to $\bs{S} \sqcup \bs{U}$ and for each line graph or cycle the color of the edges alternate.
Please refer to Figure \ref{fig::comp}  below for an example:
dashed edges represent those in $B$ and the others those in $R$. In addition, the shape of a node corresponds to which part it belongs to.
\begin{figure}[h]
\centering
\begin{tikzpicture}[main/.style = {draw, circle}]
\node (S) at (0,7) {$S$};
\node (phi) at (1.5,7.1) {$\overset{\varphi}{\sijto}$};
\node (T) at (3,7) {$T$};
\node (phi) at (4.5,7.1) {$\overset{\psi}{\sijto}$};
\node (U) at (6,7) {$U$};
\node (+) at (-2,4.5) {$+$};
\node (-) at (-2,1) {$-$};
\node[main] (1) at (0,5.5) {$+$};
\node[main] (2) at (0,3.5) {$+$};
\node[draw,rectangle] (3) at (0,1) {$-$};
\node[draw,rectangle] (4) at (3,6) {$+$};
\node[draw,rectangle] (5) at (3,5) {$+$};
\node[draw,rectangle] (6) at (3,4) {$+$};
\node[draw,rectangle] (7) at (3,3) {$+$};
\node[main] (8) at (3,2) {$-$};
\node[main] (9) at (3,1) {$-$};
\node[main] (10) at (3,0) {$-$};
\node[main] (11) at (6,5.5) {$+$};
\node[main] (12) at (6,3.5) {$+$};
\node[draw,rectangle] (13) at (6,1) {$-$};
\draw[-,thick,red] (1) -- (4);
\draw[-,thick,red] (2) -- (5);
\draw[-,thick,red] (3) -- (10);
\draw[-,thick,red] (6) to [out=180,in=180] (9);
\draw[-,thick,red] (7) to [out=180,in=180] (8);
\draw[-,dashed,thick,blue] (4) -- (11);
\draw[-,dashed,thick,blue] (5) to [out=0,in=0] (9);
\draw[-,dashed,thick,blue] (6) to [out=0,in=0] (10);
\draw[-,dashed,thick,blue] (7) to [out=0,in=0] (8);
\draw[-,dashed,thick,blue] (12) to [out=180,in=180] (13);
\draw (-2.5,-0.5) to (7.5,-0.5);
\draw (-2.5,2.5) to (7.5,2.5);
\draw (-2.5,6.5) to (7.5,6.5);
\end{tikzpicture}
\caption{A composition of sijections.}
\label{fig::comp}
\end{figure}
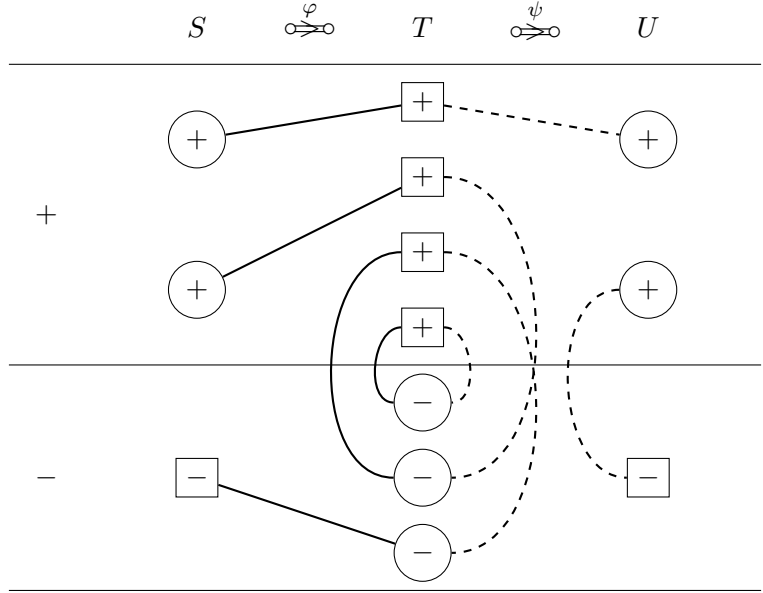

Consider $s \in \bs{S}$.
The degree of $s$ is $1$, so it is an endpoint of a line graph.
Therefore, $ \psi \circ \varphi(s) $ is the other endpoint of the line graph.
If $ \psi \circ \varphi(s) \in \bs{S} $, the first edge and the last edge in the line graph are both red, so its length is odd.
Thus, $s$ and $ \psi \circ \varphi(s) $ belong to different parts of $G$ as a bipartite graph.
Therefore, the sign of $\psi \circ \varphi(s)$ is different from that of $s$.
In the other case, namely when $ \psi \circ \varphi(s) \in \bs{U} $, the length of the line graph is even and $ \psi \circ \varphi(s) $ belongs to the same part as $s$,
so the sign of $\psi \circ \varphi(s)$ is equal to that of $s$.

For $ u \in \bs{U} $, we can give a similar proof. Therefore, $\psi \circ \varphi$ is a well-defined sijection from $S$ to $U$.
\end{proof}

Relating to this proof we define a graph $G=(V,E)$ of a sijection $\varphi \colon S \sijto T$ as follows:
\begin{align*}
	V &:= \bs{S} \sqcup \bs{T}, \\
	E &:= \{\!\{\, \{v,\varphi(v)\} \mid v \in S^{+} \sqcup T^{-}\,\}\!\}.
\end{align*}
This notion is useful to prove some lemmas and to understand properties of sijections.
For example, with this language, the sijection $\varphi \colon \sint{1}{3} \sqcup \sint{3}{2} \sijto \sint{1}{2}$ in Example \ref{ex_sij} is described as:
\[
\begin{tikzpicture}[main/.style = {draw, circle}]
\node (S) at (0,3) {$\sint{1}{3} \sqcup \sint{3}{2}$};
\node (T) at (3,3) {$\sint{1}{2}$};
\node (phi) at (-1.9,3) {$\varphi \colon$};
\node (sij) at (1.8,3) {$\sijto$};
\node (+) at (-2,1.5) {$+$};
\node (-) at (-2,0) {$-$};
\draw (-2.5,-0.5) to (3.5,-0.5);
\draw (-2.5,0.5) to (3.5,0.5);
\draw (-2.5,2.5) to (3.5,2.5);
\node[main] (1) at (0,2) {$1$};
\node[main] (2) at (0,1) {$2$};
\node[main] (3) at (0,0) {$2$};
\node[main] (4) at (3,2) {$1$};
\draw[-] (1) -- (4);
\draw[-] (2) to [out=0,in=0] (3);
\end{tikzpicture}
\]

\begin{remark}
There are two reasons why we impose finiteness on signed sets.
One of them is because, without the finiteness it could simply be meaningless since $(\mathbb{N},\mathbb{N})$ is in sijection to arbitrary signed sets (and any infinite ``signed set'').
The other reason is because it is needed for the well-definedness of compositions of sijections.
For example, let $S=(\{1\},\emptyset)$, $T=(2\mathbb{N}, 2\mathbb{N}+1)$, $U=(\emptyset,\emptyset)$, $\varphi(2x-1)=2x$ and $\psi(2x)=2x+1$.
If we define a graph in the same way as in finite cases, the connected component that $1 \in S^+$ belongs to is still a line graph but not finite since we have
\[
	\varphi(1) = 2,\quad \psi(\varphi(1))=3,\quad \varphi(\psi(\varphi(1)))=4,\quad \ldots.
\]
Thus, the proof and the definition are broken in infinite cases.
\end{remark}

Last, we prove that compositions of sijections have the associative property.
\begin{proposition}[\cite{Doyle}, Corollary 3]
Let $S$, $T$, $U$ and $V$ be signed sets and $ \varphi \colon S \sijto T $, $ \psi \colon T \sijto U $, and $ \xi \colon U \sijto V $ sijections. Then,
\[
	\xi \circ ( \psi \circ \varphi ) = ( \xi \circ \psi ) \circ \varphi .
\]
\end{proposition}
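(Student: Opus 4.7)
The plan is to extend the graph-theoretic perspective from the proof of Lemma \& Definition \ref{lem:comp} to three sijections at once. I would build a three-edge-coloured graph $G$ on the vertex set $\bs{S} \sqcup \bs{T} \sqcup \bs{U} \sqcup \bs{V}$, with red edges $\{v,\varphi(v)\}$ for $v \in S^+ \sqcup T^-$, blue edges $\{v,\psi(v)\}$ for $v \in T^+ \sqcup U^-$, and green edges $\{v,\xi(v)\}$ for $v \in U^+ \sqcup V^-$. Because each of $\varphi$, $\psi$, $\xi$ is a sijection, every vertex of $\bs{S}$ is incident to exactly one (red) edge, every vertex of $\bs{T}$ to exactly one red and one blue, every vertex of $\bs{U}$ to exactly one blue and one green, and every vertex of $\bs{V}$ to exactly one (green). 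Consequently the connected components of $G$ are either cycles lying entirely in $\bs{T} \sqcup \bs{U}$ or simple paths whose two endpoints lie in $\bs{S} \sqcup \bs{V}$; at every internal vertex the incoming edge uniquely determines the outgoing one.

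The key claim is: for every $s \in \bs{S}$, both $(\xi \circ (\psi \circ \varphi))(s)$ and $((\xi \circ \psi) \circ \varphi)(s)$ coincide with the far endpoint of the unique maximal path $P_s$ in $G$ starting at $s$; the analogous statement for $v \in \bs{V}$ follows by symmetry. To verify this I would unfold each composition according to Lemma \& Definition \ref{lem:comp}. Computing $\psi \circ \varphi$ at $s$ amounts to traversing a maximal alternating red-blue sub-walk of $P_s$ beginning at $s$, and terminates exactly at the first moment the walk either re-enters $\bs{S}$ or first enters $\bs{U}$; composing with $\xi$ then traces the next green edge (if available) and relaunches a red-blue zigzag from $\bs{U}$, and so on. Symmetrically, $(\xi \circ \psi) \circ \varphi$ first traces the unique red edge out of $s$, then runs a maximal blue-green zigzag inside $\bs{T} \sqcup \bs{U}$, then traces a red edge, and so on. Since every step of either process follows the forced outgoing edge at an internal vertex of $P_s$, both processes traverse exactly $P_s$ and output its other endpoint.

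The main obstacle is the bookkeeping: one must confirm that each termination predicate in the iterative definition of composition (an element lying outside the appropriate domain) matches precisely a moment at which $P_s$ either ends in $\bs{S} \sqcup \bs{V}$ or switches regime between the red-blue and blue-green portions of the walk. A clean way to do this is to maintain a loop invariant saying that after $k$ iterations we are at a prescribed vertex of $P_s$ reached via the correct colour class. Once this is established, both sijections agree pointwise on $\bs{S} \sqcup \bs{V}$, which suffices because a sijection $S \sijto V$ is determined by its action on that support. Associativity $\xi \circ (\psi \circ \varphi) = (\xi \circ \psi) \circ \varphi$ then follows at once.
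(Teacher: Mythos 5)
Your proposal is correct and is essentially the paper's own argument: the paper likewise forms the graph on $\bs{S} \sqcup \bs{T} \sqcup \bs{U} \sqcup \bs{V}$ with the three edge classes, notes that its components are line graphs and cycles, and concludes that both compositions send an endpoint of a line graph to the other endpoint. You simply spell out in more detail the unfolding of the iterative definition of composition, which the paper leaves implicit.
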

\begin{proof}
Consider a graph with vertices $ \bs{S} \,\sqcup\, \bs{T} \,\sqcup\, \bs{U} \,\sqcup\, \bs{V} $ and edges
$ \{\!\{\, \{v,\varphi(v)\} \mid v \in S^{+} \sqcup T^{-}\,\}\!\} + \{\!\{\, \{v,\psi(v)\} \mid v \in T^{+} \sqcup U^{-}\,\}\!\} + \{\!\{\, \{v,\xi(v)\} \mid v \in U^{+} \sqcup V^{-}\,\}\!\} $.
This graph consists of line graphs and cycles.
Moreover, for any endpoint $v$ of a line graph the other endpoint of the line graph is $(\xi \circ ( \psi \circ \varphi ))(v) = (( \xi \circ \psi ) \circ \varphi)(v)$.
\end{proof}

\subsubsection{Cartesian products of sijections} \label{sec::Cart_sij}
The definition of the Cartesian product of sijections is a bit more complicated.
For sijections $\varphi_i \colon S_i \sijto T_i$ ($i=1,2$),
we can construct a sijection $\varphi_1 \times \varphi_2$ between $S_1 \times S_2$ and $T_1 \times T_2$ as follows:
\begin{quote}
	For $s=(s_1,s_2) \in \bs{S_1 \times S_2} = \bs{S_1} \times \bs{S_2}$,
	\[
		(\varphi_1 \times \varphi_2)(s)
		= \begin{cases}
		(\varphi_1(s_1),s_2) &\qquad \varphi_1(s_1) \in \bs{S_1}, \\
		(s_1,\varphi_2(s_2)) &\qquad \varphi_1(s_1) \in \bs{T_1} \text{ and } \varphi_2(s_2) \in \bs{S_2}, \\
		(\varphi_1(s_1),\varphi_2(s_2)) &\qquad \varphi_1(s_1) \in \bs{T_1} \text{ and } \varphi_2(s_2) \in \bs{T_2},
		\end{cases}
	\]
	and for $t=(t_1,t_2) \in \bs{T_1 \times T_2} = \bs{T_1} \times \bs{T_2}$,
	\[
		(\varphi_1 \times \varphi_2)(t)
		= \begin{cases}
		(\varphi_1(t_1),t_2) &\qquad \varphi_1(t_1) \in \bs{T_1}, \\
		(t_1,\varphi_2(t_2)) &\qquad \varphi_1(t_1) \in \bs{S_1} \text{ and } \varphi_2(t_2) \in \bs{T_2}, \\
		(\varphi_1(t_1),\varphi_2(t_2)) &\qquad \varphi_1(t_1) \in \bs{S_1} \text{ and } \varphi_2(t_2) \in \bs{S_2}.
		\end{cases}
	\]
\end{quote}
It is easy to check that this sijection is indeed an involution on $\bs{S_1 \times S_2} \sqcup \bs{T_1 \times T_2}$ and that it meets the sign conditions:
\begin{itemize}
\item if $(\varphi_1 \times \varphi_2)(s) \in \bs{S_1 \times S_2}$, the sign of $(\varphi_1 \times \varphi_2)(s)$ is different from the sign of $s$,
\item if $(\varphi_1 \times \varphi_2)(s) \in \bs{T_1 \times T_2}$, the sign of $(\varphi_1 \times \varphi_2)(s)$ is equal to the sign of $s$,
\item and similar results are true for $t \in \bs{T_1 \times T_2}$.
\end{itemize}
Therefore, it is indeed a sijection.
Additionally, this definition has the associative property.
\begin{proposition}\label{prop_asso_Cart}
Let $ \varphi_i \colon S_i \sijto T_i $ be a sijection for $i=1,2,3$. Then, we have
\[
	(\varphi_1 \times \varphi_2) \times \varphi_3 = \varphi_1 \times (\varphi_2 \times \varphi_3).
\]
\end{proposition}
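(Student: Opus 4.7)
The plan is pointwise verification on the common support $(\bs{S_1} \times \bs{S_2} \times \bs{S_3}) \sqcup (\bs{T_1} \times \bs{T_2} \times \bs{T_3})$ of the two sijections in question. For an element $s=(s_1,s_2,s_3)$ in the $S$-block I would record for each $i$ whether $\varphi_i(s_i)\in\bs{S_i}$ or $\varphi_i(s_i)\in\bs{T_i}$, producing eight cases; the same two-way record applies to an element $t=(t_1,t_2,t_3)$ in the $T$-block, with $S$ and $T$ interchanged.

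The organizing observation is that the binary rule defining $\varphi\times\psi$ operates with a strict left-to-right precedence: the first factor is flipped whenever possible, then the second, and both components are flipped only when neither is fixed-point-free on its own. Iterating this to three factors, I expect both $(\varphi_1\times\varphi_2)\times\varphi_3$ and $\varphi_1\times(\varphi_2\times\varphi_3)$ to realise the following \emph{canonical ternary rule} on $s=(s_1,s_2,s_3)$: locate the smallest index $i$ with $\varphi_i(s_i)\in\bs{S_i}$ and flip only the $i$-th entry; if no such $i$ exists, flip all three entries simultaneously. The verification for $(\varphi_1\times\varphi_2)\times\varphi_3$ splits the inner product into its three defining cases and then, in the ``both accept'' sub-case, passes control to $\varphi_3$; the verification for $\varphi_1\times(\varphi_2\times\varphi_3)$ runs the same case tree but with the responsibility of detecting $\varphi_2(s_2)\in\bs{S_2}$ moved into the inner product. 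In each of the eight cases the output triple agrees. The dual rule for $t=(t_1,t_2,t_3)$ in the $T$-block locates the smallest $i$ with $\varphi_i(t_i)\in\bs{T_i}$ and is handled symmetrically.

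There is no conceptual obstacle here, only the nuisance of keeping the eight cases straight on each block and not conflating ``inside $S_i$'' with ``inside $T_i$'' as the parenthesisation changes. The cleanest write-up is a single table indexed by the $\{S,T\}$-pattern of $(\varphi_1(s_1),\varphi_2(s_2),\varphi_3(s_3))$ in which both parenthesisations are computed column by column and visibly coincide with the canonical ternary rule; the $T$-block table is the mirror image. A useful by-product of this approach is that it simultaneously defines an unambiguous ternary (and, by induction, $n$-ary) Cartesian product of sijections, which will be convenient in later sections.
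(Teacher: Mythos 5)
Your proposal is correct and follows essentially the same route as the paper: both parenthesisations are shown to realise the same canonical ternary rule (replace the leftmost entry $s_i$ with $\varphi_i(s_i)$ when that image stays in $\bs{S_i}$, and map all three entries simultaneously when no such index exists), together with the mirror rule on the $T$-block. The paper merely states this rule and asserts the agreement in two sentences, so your eight-case table is just a more explicit rendering of the same argument.
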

\begin{proof}
Regardless of how one puts the parentheses, if all $\varphi_i$ send $s_i$ to $T_i$, then $s=(s_1,s_2,s_3)$ corresponds to $(\varphi_1(s_1),\varphi_2(s_2),\varphi_3(s_3))$.
Otherwise, only the leftmost element $s_i$ of $s$ such that $\varphi_i(s_i) \in S_i$ is replaced with $\varphi_i(s_i)$.
\end{proof}

Thus, we can define the Cartesian product of a finite number of sijections as follows. (See also Proposition 2(2) of \cite{FK1}.)
\begin{definition}
Let $S_1,S_2,\ldots,S_k$, $T_1,T_2,\ldots,T_k$ be signed sets and let $ \varphi_i \colon S_i \sijto T_i $ be a sijection for $i=1,2,\ldots,k$.
We define
\[
	\varphi_1 \times \varphi_2 \times \cdots \times \varphi_k = (\cdots((\varphi_1 \times \varphi_2) \times \varphi_3) \times \cdots ) \times \varphi_k.
\]
\end{definition}
Let $s=(s_1,s_2,\ldots,s_k) \in \bs{S_1 \times S_2 \times \cdots \times S_k} = \bs{S_1} \times \bs{S_2} \times \cdots \times \bs{S_k}$.
Then, according to the proof of Proposition \ref{prop_asso_Cart},
when $\varphi_i(s_i) \in \bs{T_i}$ for all $i = 1,2,\ldots,k$, we have
\[
	(\varphi_1 \times \varphi_2 \times \cdots \times \varphi_k)(s) = (\varphi_1(s_1),\varphi_2(s_2),\ldots,\varphi_k(s_k)).
\]
Otherwise, we have
\[
	(\varphi_1 \times \varphi_2 \times \cdots \times \varphi_k)(s) = (s_1,s_2,\ldots,s_{j-1},\varphi_j(s_j),s_{j+1},\ldots,s_k),
\]
where $j$ is the minimum index such that $\varphi_j(s_j) \in \bs{S_j}$.
For an element of $\bs{T_1 \times T_2 \times \cdots \times T_k}$, a similar expression can be given.

\begin{Remark} \label{rem-2.2.2}
We remark on a disadvantageous feature of this Cartesian product of sijections.
Let $\varphi_i \colon S_i \sijto T_i$ and $\psi_i \colon T_i \sijto U_i$ be sijections for $i=1,2$.
In general, it does not hold that
\begin{equation} \label{cartesian-counterex}
	(\psi_1 \times \psi_2) \circ (\varphi_1 \times \varphi_2) = (\psi_1 \circ \varphi_1) \times (\psi_2 \circ \varphi_2).
\end{equation}
For example, let $S=(\{A\},\emptyset)$, $T=(\{A,B\},\{B^\dag\})$ and define a sijection $\varphi$ as
\[
\begin{tikzpicture}[main/.style = {draw, circle}]
\node (S) at (0,3) {$S$};
\node (T) at (3,3) {$T$};
\node (phi) at (-1.9,3) {$\varphi \colon$};
\node (sij) at (1.5,3) {$\sijto$};
\node (+) at (-2,1.5) {$+$};
\node (-) at (-2,0) {$-$};
\draw (-2.5,-0.5) to (3.5,-0.5);
\draw (-2.5,0.5) to (3.5,0.5);
\draw (-2.5,2.5) to (3.5,2.5);
\node[main] (1) at (0,2) {$A$};
\node[main] (2) at (3,1) {$B$};
\node[scale=0.84,main] (3) at (3,0) {$B^\dag$};
\node[main] (4) at (3,2) {$A$};
\draw[-] (1) -- (4);
\draw[-] (2) to [out=180,in=180] (3);
\end{tikzpicture}.
\]

If the relation (\ref{cartesian-counterex}) is always true, it must hold that
\[
	(\text{id}_T \times \varphi) \circ (\varphi \times \text{id}_S) = \varphi \times \varphi =  (\varphi \times \text{id}_T) \circ (\text{id}_S \times \varphi).
\]
However, we have by a simple calculation that
\[
	((\text{id}_T \times \varphi) \circ (\varphi \times \text{id}_S))((B,B)) = (B,B^\dag) \ne (B^\dag,B) = ((\varphi \times \text{id}_T) \circ (\text{id}_S \times \varphi))((B,B)),
\]
so we have a contradiction. Thus, the relation (\ref{cartesian-counterex}) is not always true.

This feature is inevitable, namely it is not due to a defect in the definition.
For instance, in the above situation, we cannot determine which of $(B,B^\dag)$ and $(B^\dag,B)$ should be $(\varphi \times \varphi)((B,B))$ a priori because of symmetry.
\end{Remark}

\subsubsection{Disjoint unions of sijections}
For sijections $\varphi_i \colon S_i \sijto T_i$ ($i=1,2$), the disjoint union $\varphi_1 \sqcup \varphi_2 \colon S_1 \sqcup S_2 \sijto T_1 \sqcup T_2$ is defined by
\[ (\varphi_1 \sqcup \varphi_2)(s) = \begin{cases} \varphi_1(s) & s \in \bs{S_1} \sqcup \bs{T_1}, \\ \varphi_2(s) & s \in \bs{S_2} \sqcup \bs{T_2}. \end{cases}\]
We define the disjoint union of a finite number of sijections in the same manner.
Considering the graph of sijections, the disjoint union is just a juxtaposition, so we have $(\varphi_1 \sqcup \varphi_2) \sqcup \varphi_3 = \varphi_1 \sqcup (\varphi_2 \sqcup \varphi_3)$.

Next, we define the disjoint union with signed index of a family of sijections.
\begin{definition}[Proposition 2(3) in \cite{FK1}] \label{def_sij_DIS}
Let $T$, $\tilde{T}$ be signed sets and $\psi \colon T \sijto \tilde{T}$ a sijection.
Assume that we have signed sets $\{ S_t \}_{t \in \bs{T} \sqcup \bs{\tilde{T}}}$ and sijections
$ \{ \varphi_t \colon S_t \sijto S_{\psi(t)} \}_{ t \in T^+ \sqcup \tilde{T}^-}  $.
Let $\varphi_t = \varphi_{(\psi(t))}^{-1}$ for $t \in T^- \sqcup \tilde{T}^+$.
Then, we have a sijction $\varphi$ from $\bigsqcup_{t \in T} S_t$ to $\bigsqcup_{t \in \tilde{T}} S_t$ defined by
\[
	\varphi\left((s_t,t)\right)=\begin{cases} (\varphi_t(s_t),t) & \text{if } \varphi_t(s_t) \in S_t, \\ (\varphi_t(s_t),\psi(t)) & \text{if } \varphi_t(s_t) \in S_{\psi(t)}. \end{cases}
\]
We denote it by $\bigsqcup_{t \in T \sqcup \tilde{T}} \varphi_t$.
\end{definition}

Considering the graphs of sijections, the disjoint union of this type is also a juxtaposition. Therefore, it is indeed a sijection.
When $T=\tilde{T}$ and $\psi=\mathrm{id}_T$ hold, the situation becomes simple.

Let $\{ \varphi_t \colon S_t \sijto \tilde{S}_t \}_{t \in \bs{T}}$ be sijections, then we have
\[
	\bigsqcup_{t \in T} \varphi_t \left( := \bigsqcup_{t \in T \sqcup T} \varphi_t \right)\colon \bigsqcup_{t \in T} S_t \sijto \bigsqcup_{t \in T} \tilde{S}_t.
\]


We shall construct another four canonical sijections.
\begin{construction}\label{sij_dist}
Let $S$, $T_1$ and $T_2$ be signed sets. Then, we have a canonical sijection
\[
	S \times (T_1 \sqcup T_2) \sijto (S \times T_1) \sqcup (S \times T_2),
\]
which is derived from the identities
\begin{align*}
	\left( S \times (T_1 \sqcup T_2) \right)^+ &= \left( (S \times T_1) \sqcup (S \times T_2) \right)^+ \\
	&= (S^+ \times T_1^+) \sqcup (S^+ \times T_2^+) \sqcup (S^- \times T_1^-) \sqcup (S^- \times T_2^-),\\
	\left( S \times (T_1 \sqcup T_2) \right)^- &= \left( (S \times T_1) \sqcup (S \times T_2) \right)^- \\
	&= (S^+ \times T_1^-) \sqcup (S^+ \times T_2^-) \sqcup (S^- \times T_1^+) \sqcup (S^- \times T_2^+).
\end{align*}
\end{construction}
\begin{construction}\label{sij_dist_family}
Let $S$ and $T$ be signed sets and $\{S_t\}_{t \in \bs{T}}$ a family of signed sets. Then, we have a canonical sijection
\[
	S \times \bigsqcup_{t \in T} S_t \sijto \bigsqcup_{t \in T} \left( S \times S_t \right)
\]
which is derived from the identities.
\end{construction}
\begin{construction}\label{du_dist}
Let $T$, $U$ be signed sets and $\{S_v\}_{v \in \bs{T} \sqcup \bs{U}}$ a family of signed sets.
Then, we have a canonical sijection
\[
	\bigsqcup_{t \in T} S_t \sqcup \bigsqcup_{u \in U} S_u \sijto \bigsqcup_{v \in T \sqcup U} S_v,
\]
which is derived from the identities.
\end{construction}
\begin{construction}\label{du_ord}
Let $S_1$ and $S_2$ be signed sets and $\{ T_{s_1,s_2} \}_{s_1 \in \bs{S_1}, s_2 \in \bs{S_2}}$ a family of signed sets.
Then, we have a canonical sijection
\[
	\bigsqcup_{s_1 \in S_1} \bigsqcup_{s_2 \in S_2} T_{s_1,s_2} \sijto \bigsqcup_{s_2 \in S_2} \bigsqcup_{s_1 \in S_1} T_{s_1,s_2},
\]
which is derived from bijections of the form
\[
	\bigsqcup_{a \in A} \bigsqcup_{b \in B} C_{a,b} \to \bigsqcup_{b \in B} \bigsqcup_{a \in A} C_{a,b} \colon ((c,b),a) \mapsto ((c,a),b),
\]
where $A$, $B$ and $C_{a,b}$ (for each $a \in A$ and $b \in B$) are ordinary sets.
\end{construction}
\begin{construction}\label{du_nest}
Let $U$ be a signed set and $\{T_u\}_{u\in\bs{U}}$, $\{S_{t,u}\}_{u \in\bs{U},t \in\bs{T_u}}$ families of signed sets.
Then, we have a canonical sijection
\[
	\bigsqcup_{u \in U} \bigsqcup_{t \in T_u} S_{t,u} \sijto \bigsqcup_{t \in \bigsqcup_{u \in U} T_u} S_{t,u},
\]
which is derived from bijections of the form
\[
	\bigsqcup_{a \in A} \bigsqcup_{b \in B_a} C_{a,b} \to \bigsqcup_{b \in \bigsqcup_{a \in A} B_a} C_{a,b} \colon ((c,b),a) \mapsto (c,(b,a)).
\]
\end{construction}

These constructions can be decomposed to bijections, which are almost identities.
In these cases, we shall identify the domain with the codomain and  use ``$=$'' to describe these sijections.

Lastly, we explain the operator precedence for signed sets.
The operators should be evaluated in the order:
\begin{enumerate}
\item taking the opposite ($-$),
\item the Cartesian products ($\times$),
\item disjoint unions ($\sqcup$).
\end{enumerate}
For example, we can write $S \times T_1 \sqcup S \times T_2$ instead of $(S \times T_1) \sqcup (S \times T_2)$,
but not $S \times T_1 \sqcup T_2$ instead of $S \times (T_1 \sqcup T_2)$.

\section{Compatibility}
We introduce the notion of \textit{compatibility}, the most important idea in this paper.
\begin{definition}
A \textit{statistic} of a signed set is a function on (at least) its support.
Let $S$, $T$ be signed sets and let $\eta$ be a statistic of $S \sqcup T$. 
A sijection $\phi \colon S \sijto T$ is \textit{compatible with $\eta$} if
\[
	\forall s \in \bs{S \sqcup T},\,\eta(\phi(s))=\eta(s).
\]
\end{definition}
The codomain of statistics does not matter, so we will not pay attention to it.
For simplicity, we will sometimes write $ s \insupp  S $ instead of $s \in \bs{S}$.

Compatibility is a generalization of the notion of \textit{normality} which is used in \cite{FK1}.
For example, let us consider the sijection $\varphi$ in Example \ref{ex_sij}, which is normal in the language of \cite{FK1}.
We can define a canonical $\mathbb{Z}$-valued statistic of $\sint{a}{b} \sqcup (\sint{a}{c} \sqcup \sint{c}{b})$,
because we can recognize each element of the support of  $\sint{a}{b} \sqcup (\sint{a}{c} \sqcup \sint{c}{b})$ as an integer in the canonical way.
The sijection $\varphi$ is compatible with this statistic.
We call statistics of this type \textit{normal} statistics.
The precise definition is given in Definition \ref{defofnormal}.

Compatibility is preserved under compositions and so is normality.
\begin{lemma}\label{lem:comp_comp}
Let $\phi \colon S \sijto T$, $\psi \colon T \sijto U$ be sijections and $\eta$ a statistic of $S \sqcup T \sqcup U$.
If $\phi$ and $\psi$ are compatible with $\eta$, then $\psi \circ \phi$ is compatible with $\eta$.
\end{lemma}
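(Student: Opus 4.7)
The plan is to unfold the definition of composition from Lemma \& Definition \ref{lem:comp} and observe that compatibility is preserved along each step of the constructing sequence. Given $s \in \bs{S} \sqcup \bs{U}$, $(\psi \circ \phi)(s)$ is obtained as the last well-defined element of an alternating sequence of applications of $\phi$ and $\psi$ (starting with $\phi$ if $s \in \bs{S} \sqcup \bs{T}$ in the appropriate position, or $\psi$ otherwise). Since each individual application either uses $\phi$ (which is compatible with $\eta$) or $\psi$ (which is compatible with $\eta$), every consecutive pair of terms in the sequence has the same $\eta$ value.

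Concretely, I would argue as follows. Fix $s \in \bs{S} \sqcup \bs{U}$ and let $x_0 = s, x_1, x_2, \ldots, x_N = (\psi \circ \phi)(s)$ denote the sequence of elements produced by the alternating application of $\phi$ and $\psi$. For each $i$, the transition $x_i \mapsto x_{i+1}$ is a single application of either $\phi$ or $\psi$, so by the compatibility hypothesis $\eta(x_{i+1}) = \eta(x_i)$. Hence $\eta(x_N) = \eta(x_0)$, which is exactly $\eta((\psi \circ \phi)(s)) = \eta(s)$. This works uniformly whether $s \in \bs{S}$, $s \in \bs{U}$, or (implicitly) $s \in \bs{T}$ on which the composition is defined pointwise through the same mechanism.

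Alternatively, one can phrase the same argument using the graph $G$ of the composition introduced in the proof of Lemma \& Definition \ref{lem:comp}: each edge of $G$ joins two vertices $v, v'$ with $\eta(v) = \eta(v')$, so the endpoints of any line graph component share a common $\eta$ value, and these endpoints are exactly the pairs $\{s, (\psi \circ \phi)(s)\}$.

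There is no real obstacle here; the only care needed is to handle all cases of where $s$ lives (in $\bs{S}$, $\bs{T}$, or $\bs{U}$) and to make sure the induction is set up so that the last step — whichever of $\phi$ or $\psi$ it is — still falls under the compatibility hypothesis, which it does by assumption on both maps.
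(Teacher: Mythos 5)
Your argument is correct and is essentially the paper's own proof: one traces the alternating sequence $s, \phi(s), \psi(\phi(s)), \ldots$ and notes that each well-defined step preserves $\eta$ by the compatibility of $\phi$ and $\psi$, so the last well-defined element has the same $\eta$-value as $s$. (The parenthetical about $s \in \bs{T}$ is superfluous, since $\psi\circ\phi$ is a sijection from $S$ to $U$ and compatibility only needs to be checked on $\bs{S}\sqcup\bs{U}$, but this does not affect the proof.)
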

\begin{proof}
For any $s \insupp S$, $\eta(s)=\eta(\phi(s))=\eta(\psi(\phi(s)))=\cdots$ while they are well-defined.
Therefore, $\eta(s) = \eta(\psi \circ \phi(s))$.
The same is true for an element of $\bs{U}$.
\end{proof}

As well as for compositions, compatibility is preserved under disjoint unions and Cartesian products.
\begin{lemma}\label{lemofsqcup}
Let $ \varphi_i \colon S_i \sijto T_i $ be a sijection compatible with a statistic $\eta_i$ for $i=1,2$.
When we define a statistic $\eta=\eta_1\sqcup\eta_2$ by
\begin{align*}
	\eta(u)=(\eta_1\sqcup\eta_2)(u)=\begin{cases}
		\eta_1(u) & (u \insupp {S_1 \sqcup T_1}) \\
		\eta_2(u) & (u \insupp {S_2 \sqcup T_2}), \end{cases}
\end{align*}
$\varphi_1 \sqcup \varphi_2$ is compatible with $\eta_1\sqcup\eta_2$.
\end{lemma}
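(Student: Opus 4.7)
The plan is to prove this by a direct case analysis on which summand the element $u$ belongs to, using the defining property of the disjoint union of sijections together with the compatibility hypothesis on each $\varphi_i$.

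First I would recall that by the definition of the disjoint union of sijections given just before Definition \ref{def_sij_DIS}, for any $u \insupp S_1 \sqcup S_2 \sqcup T_1 \sqcup T_2$ we have
\[
	(\varphi_1 \sqcup \varphi_2)(u) = \begin{cases} \varphi_1(u) & u \insupp S_1 \sqcup T_1, \\ \varphi_2(u) & u \insupp S_2 \sqcup T_2. \end{cases}
\]
The key observation is that $\varphi_i$ is an involution on $\bs{S_i} \sqcup \bs{T_i}$, so the image $\varphi_i(u)$ stays inside the same part $\bs{S_i} \sqcup \bs{T_i}$ as $u$; in particular it lies in the domain on which $\eta$ equals $\eta_i$.

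Then I would split into the two cases. If $u \insupp S_1 \sqcup T_1$, then $(\varphi_1 \sqcup \varphi_2)(u) = \varphi_1(u) \insupp S_1 \sqcup T_1$, so by the definition of $\eta$ and the compatibility of $\varphi_1$ with $\eta_1$ we get
\[
	\eta\bigl((\varphi_1 \sqcup \varphi_2)(u)\bigr) = \eta_1(\varphi_1(u)) = \eta_1(u) = \eta(u).
\]
The case $u \insupp S_2 \sqcup T_2$ is identical with indices swapped.

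There is no real obstacle here; the only thing worth checking carefully is that $\eta_1 \sqcup \eta_2$ is well-defined, which follows from $(\bs{S_1} \sqcup \bs{T_1}) \cap (\bs{S_2} \sqcup \bs{T_2}) = \emptyset$ (an implicit assumption for the disjoint unions $S_1 \sqcup S_2$ and $T_1 \sqcup T_2$ to be signed sets in the first place), and that the image $\varphi_i(u)$ stays in $\bs{S_i} \sqcup \bs{T_i}$, which is immediate from $\varphi_i$ being a sijection between $S_i$ and $T_i$.
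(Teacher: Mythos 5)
Your proof is correct and follows essentially the route the paper intends: the paper omits an explicit proof, remarking only that these compatibility lemmas ``can be given either by considering the graphs of sijections or by using the definitions,'' and your direct case analysis is precisely the ``by using the definitions'' argument, with the key (and correctly identified) point that $\varphi_i(u)$ remains in $\bs{S_i}\sqcup\bs{T_i}$ so that $\eta$ restricts to $\eta_i$ on both $u$ and its image.
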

\begin{lemma}\label{lemoftimes}
Let $ \varphi_i \colon S_i \sijto T_i $ be a sijection compatible with a statistic $\eta_i$ for $i=1,2$.
When we define a statistic $\eta=\eta_1\times\eta_2$ by
\begin{align*}
	\eta((u_1,u_2))=(\eta_1\times\eta_2)(u)=(\eta_1(u_1),\eta_2(u_2)),
\end{align*}
$\varphi_1 \times \varphi_2$ is compatible with $\eta_1\times \eta_2$.
\end{lemma}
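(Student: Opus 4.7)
The plan is to unpack the definition of $\varphi_1 \times \varphi_2$ from Section \ref{sec::Cart_sij} and verify componentwise that each coordinate's statistic is preserved. Since $\eta_1 \times \eta_2$ acts coordinatewise on a pair $(u_1,u_2)$, the identity $(\eta_1\times\eta_2)((\varphi_1\times\varphi_2)(u)) = (\eta_1\times\eta_2)(u)$ will follow as soon as we know that each component of $(\varphi_1 \times \varphi_2)(u)$ either equals the corresponding component of $u$ or differs from it by applying $\varphi_i$ (which preserves $\eta_i$ by hypothesis).

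More concretely, I would proceed by a straightforward case analysis. Take first $s=(s_1,s_2) \insupp S_1 \times S_2$, so $s_1 \insupp S_1$ and $s_2 \insupp S_2$. According to the definition in Section \ref{sec::Cart_sij}, there are three cases:
\begin{itemize}
\item If $\varphi_1(s_1) \insupp S_1$, then $(\varphi_1 \times \varphi_2)(s) = (\varphi_1(s_1),s_2)$, and by compatibility of $\varphi_1$ with $\eta_1$ we have $\eta_1(\varphi_1(s_1)) = \eta_1(s_1)$, while the second coordinate is unchanged.
\item If $\varphi_1(s_1) \insupp T_1$ and $\varphi_2(s_2) \insupp S_2$, then $(\varphi_1 \times \varphi_2)(s) = (s_1,\varphi_2(s_2))$, and we use compatibility of $\varphi_2$ with $\eta_2$ on the second coordinate.
\item If $\varphi_1(s_1) \insupp T_1$ and $\varphi_2(s_2) \insupp T_2$, then both coordinates change and both compatibilities apply.
\end{itemize}
In every case $(\eta_1(u_1'), \eta_2(u_2')) = (\eta_1(s_1),\eta_2(s_2))$, so $(\eta_1\times\eta_2)$ is preserved. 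The case $t=(t_1,t_2) \insupp T_1 \times T_2$ is completely symmetric, using the three subcases listed for elements of $\bs{T_1 \times T_2}$ in Section \ref{sec::Cart_sij}.

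There is no real obstacle here; this is a direct verification from the definition, and the argument is essentially the coordinatewise nature of both the product sijection and the product statistic. The only thing to be careful about is to match the six subcases of the definition of $\varphi_1\times\varphi_2$ exactly, and to remember that in each subcase whichever coordinate changes does so via some $\varphi_i$, whose compatibility with $\eta_i$ is available by hypothesis. The same scheme (together with Proposition \ref{prop_asso_Cart} and an easy induction) extends the statement to Cartesian products of any finite number of compatible sijections, should that be needed later.
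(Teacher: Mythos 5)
Your case analysis is correct and is exactly the ``by using the definitions'' route that the paper itself points to (the paper states this lemma without a written-out proof, remarking only that it follows either from the definitions or from the graphs of the sijections). In each of the six subcases every coordinate is either fixed or replaced by $\varphi_i$ of itself, so the coordinatewise statistic is preserved, as you argue.
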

By these definitions, a disjoint union and the Cartesian product of more than two sijections can be treated in the same manner.
Furthermore, in the situation of Definition \ref{def_sij_DIS}, if for all $t \in T^+ \sqcup \tilde{T}^-$, a sijection $\varphi_t$ is 
 compatible with some statistic $\eta$,
then $\bigsqcup_{t \in T \sqcup \tilde{T}} \varphi_t$ is also compatible with $\eta$.
Proofs can be given either by considering the graphs of sijections or by using the definitions.

In addition, if we have a sijction $\psi \colon T \sijto \tilde{T}$ compatible with some statistic $\eta$ and a family of signed sets $\{ S_a \}_{a \in \eta\left(\bs{T} \sqcup \bs{\tilde{T}}\right)}$
then we have, by applying the Definition \ref{def_sij_DIS},  a sijection
\begin{equation} \label{du_id}
\bigsqcup_{t \in T \sqcup \tilde{T}} \text{id}_{S_\eta(t)} \colon \bigsqcup_{t \in T} S_{\eta(t)} \sijto \bigsqcup_{t \in \tilde{T}} S_{\eta(t)}.
\end{equation}
Furthermore, this sijection is compatible with the `forgetful' statistic:
\[\eta_f \colon \bs{ \bigsqcup_{t \in T} S_{\eta(t)} \sqcup \bigsqcup_{t \in \tilde{T}} S_{\eta(t)}} \to \bigcup_{a \in \eta(\bs{T} \sqcup \bs{\tilde{T}})} \bs{S_a} \colon 
\begin{cases}
((s,t),0) \mapsto (s,\eta(t)) \\
((s,t),1) \mapsto (s,\eta(t)).
\end{cases}\]
\begin{definition}\label{defofnormal}
A \textit{normal} signed set is a signed interval or a signed set that is made by finite operations of disjoint union or Cartesian product of signed intervals.
For a signed interval, its \textit{normal statistic} is the inclusion map to $\mathbb{Z}$.
For two pairs of normal signed sets and their normal statistics $(S_1,\eta_1)$, $(S_2,\eta_2)$,
we define the normal statistic of $S_1 \sqcup S_2$ by $\eta_1 \sqcup \eta_2$ as in Lemma \ref{lemofsqcup} and
the normal statistic of $S_1 \times S_2$ by $\eta_1 \times \eta_2$ as in Lemma \ref{lemoftimes}.
\end{definition}
\begin{remark}
These definitions are similar to those of elementary signed sets and normality in \cite{FK1}.
\end{remark}

We prepare one notation for later use.
\begin{definition}
Let $S$ be a signed set and $\eta$ a statistic of $S$. For an element $a \in \eta(\bs{S})$, we define the restriction of $S$ as follows:
\[
	S \mid_{\eta=a} \,= \left(\{ s \in S^+ \mid \eta(s)=a\}, \{ s \in S^- \mid \eta(s)=a\}\right).
\]
\end{definition}

Let us explain the motivation for introducing the notion of compatibility.
In bijective combinatorics, we like to think that the more natural a bijection (or a sijection for that matter), the better. 
However, whether one particular bijection (or sijection) is more natural than another can be controversial.
The notion of compatibility is intended to offer an answer to this problem.
More specifically, a bijection (or a sijection) is considered to be `more natural' if it is compatible with a finer statistic, where the size of the image of a statistic determines how fine it is.
Unfortunately however, for any given sijection $\Gamma \colon S \sijto T$, we can always construct one of the finest possible statistics which, at the same time, is also entirely trivial: $\eta \colon \bs{S \sqcup T} \to S^{+} \sqcup T^{-}$ such that
\begin{gather*}
	\forall s \in S^{+} \sqcup T^{-},\, \eta(s)=s, \\
	\forall s \in S^{-} \sqcup T^{+},\, \eta(s)=\Gamma(s).
\end{gather*}
Nevertheless, we believe that the notion of compatibility is useful because it allows us to translate a sense of `naturalness' (as opposed to `contrivedness') of a sijection into the naturalness of a statistic.
When a `natural looking' sijection is given, we can explain why it can truly be regarded as natural using the notion of compatibility.
This process sometimes leads us to novel combinatorial structures.
Our first application of compatibility concerns Gelfand-Tsetlin patterns.
It is an example of this type of application of compatibility.
For more details, see Section \ref{secofGT}.

In another interpretation, a construction of a sijection with compatibility gives a bijective proof of a refined enumeration.
In this context, first, a seemingly natural statistic is given according to a computational proof of a refined enumeration.
After that, we can try to construct a sijection compatible with that statistic.
In fact, the existence is guaranteed through the computational proof but we need to find an algorithm to construct it.
Our second application of compatibility is of this type.
We define statistics on alternating sign matrices and shifted Gelfand-Tsetlin patterns by referring to refined enumerations for alternating sign matrices \cite{BFZ1,BFZ2},
and construct a sijection which is compatible with the statistics.
The construction is similar to Fischer and Konvalinka's work \cite{FK1}, but our construction is more elementary.
For more details, see Section \ref{secofSGT}.

In the first type of application we make a statistic referring to a sijection and vice versa in the second type of application.
Either way, we have to obtain a pair of a natural looking statistic and a natural sijection compatible with the statistic to state that they are truly natural in the sense explained above.

\section{
About Gelfand-Tsetlin patterns}\label{secofGT}
\subsection{Definitions}
A Gelfand-Tsetlin pattern is a triangular array $(a_{i,j})_{1 \leq j \leq i \leq n}$ of integers such that for any $1 \leq j \leq i \leq n-1$, it holds that $a_{i+1,j} \leq a_{i,j} \leq a_{i+1,j+1}$.
When the elements are arranged as follows
\[
\begin{array}{ccccccccc}
&&&&a_{1,1}&&&&\\
&&&a_{2,1}&&a_{2,2}&&&\\
&&a_{3,1}&&a_{3,2}&&a_{3,3}&&\\
&\iddots&&\vdots&&\vdots&&\ddots&\\
a_{n,1}&\cdots&\cdots&\cdots&\cdots&\cdots&\cdots&\cdots&a_{n,n},
\end{array}
\]
each element is greater than or equal to the element left-below and is less than or equal to the element right-below.
We call the sequence $(a_{n,1},a_{n,2},\ldots,a_{n,n})$ the bottom row of a Gelfand-Tsetlin pattern $(a_{i,j})_{ij}$.
For reasons that will become clear afterwards, we shall modify this definition a bit.
We impose the condition
\begin{equation}
\label{condition_GT}
	a_{i+1,j} \leq a_{i,j} < a_{i+1,j+1},\quad \text{for all $1 \leq j \leq i \leq n-1$,}
\end{equation}
instead of $a_{i+1,j} \leq a_{i,j} \leq a_{i+1,j+1}$.
We call triangular arrays subject to this new conditions \textit{modified Gelfand-Tsetlin patterns}.
Clearly, Gelfand-Tsetlin patterns with bottom row $(k_1,k_2,\ldots,k_n)$ are in one-to-one correspondence with modified Gelfand-Tsetlin patterns with bottom row $(k_1,k_2+1,\ldots,k_n+n-1)$.
Indeed, a Gelfand-Tsetlin pattern $(a_{i,j})_{ij}$ corresponds to a modified Gelfand-Tsetlin pattern $(a_{i,j}+j-1)_{ij}$.
For example,
\[
	\begin{array}{ccccc}&&3&&\\&1&&3&\\1&&3&&4\end{array} \mapsto \begin{array}{ccccc}&&3&&\\&1&&4&\\1&&4&&6.\end{array}
\]

A generalization of Gelfand-Tsetlin patterns for bottom rows that are not monotonically increasing is introduced in \cite{FK1}.
The signed set of the generalization is defined as follows.
\begin{definition}[Definition 5 in \cite{FK1}]
For $k \in \mathbb{Z}$, define $\textrm{GT}^\textrm{FK}(k)=(\{\cdot\},\emptyset)$, and for $\mathbf{k}=(k_1,k_2,\ldots,k_n) \in \mathbb{Z}^n$, $n \geq 2$,
define $\textrm{GT}^\textrm{FK}(\mathbf{k})$ recursively as
\[
	\textrm{GT}^\textrm{FK}(\mathbf{k})=\textrm{GT}^\textrm{FK}(k_1,k_2,\ldots,k_n)=\bigsqcup_{\mathbf{l} \in \underline{[k_1,k_2]}\times\cdots\times\underline{[k_{n-1},k_n]}} \textrm{GT}^\textrm{FK}(\mathbf{l}).
\]
\end{definition}
Here, $\underline{[a,b]}$ is a variant of a signed interval and is equal to $\underline{[a,b+1)}$. (Please pay attention to the closing parenthesis.)
As well as this definition, the signed set of modified Gelfand-Tsetlin patterns is defined as follows.
\begin{definition} \label{def-GT}
For $k \in \mathbb{Z}$, define $\textrm{GT}(k)=(\{\cdot\},\emptyset)$, and for $\mathbf{k}=(k_1,k_2,\ldots,k_n) \in \mathbb{Z}^n$, $n \geq 2$,
define $\textrm{GT}(\mathbf{k})$ recursively as
\[
	\textrm{GT}(\mathbf{k})=\textrm{GT}(k_1,k_2,\ldots,k_n)=\bigsqcup_{\mathbf{l} \in \underline{[k_1,k_2)}\times\cdots\times\underline{[k_{n-1},k_n)}} \textrm{GT}(\mathbf{l}).
\]
\end{definition}
The only difference between the two definitions is the closing parenthesis.
As well as in the unsigned case,
for $\mathbf{k} = (k_1,k_2,\ldots,k_n) \in \mathbb{Z}^n$ and $\mathbf{k}' = (k_1,k_2+1,\ldots,k_n+n-1)$, two signed sets $\textrm{GT}^\textrm{FK}(\mathbf{k})$ and $\textrm{GT}(\mathbf{k}')$ are essentially the same.
In fact, we can make $$\mathbf{l} = (l_1,l_2,\ldots,l_{n-1}) \in \underline{[k_1,k_2]}\times\cdots\times\underline{[k_{n-1},k_n]}$$ correspond to 
$$\mathbf{l'} = (l_1,l_2+1,\ldots,l_{n-1}+n-2) \in \underline{[k_1,k_2+1)}\times\cdots\times\underline{[k_{n-1}+n-2,k_n+n-1)}.$$
Therefore, we can check recursively that $\# \textrm{GT}^\textrm{FK}(\mathbf{k})^+ = \# \textrm{GT}(\mathbf{k}')^+$
and that $\# \textrm{GT}^\textrm{FK}(\mathbf{k})^- = \# \textrm{GT}(\mathbf{k}')^- $.

\begin{example}
For example, $((2,(1,3)),(3,1,4))$ is an element of $\text{GT}((7,1,3,5))$ because
\[
	2 \insupp \sint{1}{3},\quad (1,3) \insupp \sint{3}{1} \times \sint{1}{4},\quad (3,1,4) \insupp \sint{7}{1} \times \sint{1}{3} \times \sint{3}{5}.
\]
This sign is plus because there is an even number of ` $>$' in the following:
\[
	1<3,\quad 3>1<4,\quad 7>1<3<5.
\]
We can interpret an element of $\text{GT}(\mathbf{k})$ (or $\text{GT}^\textrm{FK}(\mathbf{k})$) as a triangular array.
For the above example, we can draw it as
\[
	\begin{array}{ccccccc}&&&2&&&\\&&1&&3&&\\&3&&1&&4&\\7&&1&&3&&5.\end{array}
\]
Accordingly, it is clear that the formal definition of $\text{GT}(\mathbf{k})$ is indeed identical to the triangular array-based definition of modified Gelfand-Tsetlin patterns when the bottom row $\mathbf{k}$ is monotonically increasing.
From this correspondence, we call $\mathbf{l}$ the second bottom row of $T = (T', \mathbf{l}) \in \textrm{GT}(\mathbf{k})$.
For instance, the second bottom row of our running example is $(3,1,4)$.
\end{example}

\subsection{
Known sijections relevant to Gelfand-Tsetlin patterns
}
Fischer and Konvalinka construct some sijections relevant to Gelfand-Tsetlin pattern in \cite{FK1}.
In this subsection, we explain these known results.
Using our notations, it is easy to describe the results and the constructions become transparent,
which is why we choose to give considerable details for them although they are essentially the same as Fischer and Konvalinka's work.
\begin{construction}[Problem 4 in \cite{FK1}]\label{rho}
Let $\mathbf{a}=(a_1,a_2,\ldots,a_n) \in \mathbb{Z}^n$, $\mathbf{b}=(b_1,b_2,\ldots,b_n) \in \mathbb{Z}^n$ and $x \in \mathbb{Z}$.
We construct a sijection
\[
	\rho=\rho_{\mathbf{a},\mathbf{b},x} \colon \bigsqcup_{\mathbf{l} \in \sint{a_1}{b_1}\times\sint{a_2}{b_2}\times\cdots\times\sint{a_n}{b_n}}\text{GT}(\mathbf{l})
	\sijto \bigsqcup_{\mathbf{m} \in S_1 \times S_2 \times\cdots\times S_n} \text{GT}(\mathbf{m},x),
\]
where $S_i=(\{a_i\},\{b_i\})$, $\mathbf{m}=(m_1,m_2,\ldots,m_n)$ and $\text{GT}(\mathbf{m},x)=\text{GT}(m_1,m_2,\ldots,m_n,x)$.

First, we construct the following sijection.
\begin{subconstruction}[Problem 2. in \cite{FK1}]\label{beta}
We construct a sijection
\begin{multline*}
	\beta=\beta_{\mathbf{a},\mathbf{b},x} \colon  \sint{a_1}{b_1}\times\sint{a_2}{b_2}\times\cdots\times\sint{a_n}{b_n} \\
	\sijto \bigsqcup_{\mathbf{m} \in S_1 \times S_2 \times\cdots\times S_n} \sint{m_1}{m_2}\times\sint{m_2}{m_3}\times\sint{m_{n-1}}{m_n}\times\sint{m_n}{x},
\end{multline*}
which is compatible with the normal statistic.
The construction is by induction on $n$. If $n=1$, it coincides with Example \ref{ex_sij}. In fact, we have $\sint{a_1}{b_1} \sijto \sint{a_1}{x} \sqcup \sint{x}{b_1} = \sint{a_1}{x} \sqcup -\sint{b_1}{x}=\bigsqcup_{m_1 \in S_1} \sint{m_1}{m_2}$.
If $n>1$, we have, by induction,
\begin{align*}
	\sint{a_1}{b_1}&\times\sint{a_2}{b_2}\times\cdots\times\sint{a_n}{b_n} \\
	&\overset{ind.}{\quad\sijto\quad} \sint{a_1}{b_1}\times \bigsqcup_{\mathbf{(m_2,m_3,\ldots,m_n)} \in S_2 \times S_3 \times\cdots\times S_n} \sint{m_2}{m_3}\times\sint{m_{n-1}}{m_n}\times\sint{m_n}{x} \\
	&\overset{C. \ref{sij_dist_family}}{\quad\sijto\quad} \bigsqcup_{\mathbf{(m_2,m_3,\ldots,m_n)} \in S_2 \times S_3 \times\cdots\times S_n}\sint{a_1}{b_1}\times  \sint{m_2}{m_3}\times\sint{m_{n-1}}{m_n}\times\sint{m_n}{x}.
\end{align*}
Here, we have 
$\sint{a_1}{b_1} \sijto \bigsqcup_{m_1 \in S_1} \sint{m_1}{m_2}$ by substituting $m_2$ for $x$ in the result of base step,
then the construction is completed. The compatibility is induced in the obvious way.
\qed
\end{subconstruction}
We return to the construction of $\rho$.
By using the subconstruction and Construction \ref{du_nest}, and considering a disjoint union with signed index of $\text{id}_{\text{GT}(\mathbf{l})}$ (cf. (\ref{du_id})), we obtain a sijection
\begin{align*}
	\bigsqcup_{\mathbf{l} \in \sint{a_1}{b_1}\times\sint{a_2}{b_2}\times\cdots\times\sint{a_n}{b_n}}\text{GT}(\mathbf{l})
	\overset{\sqcup\text{id}_{\text{GT}(\mathbf{l})}}{\quad\sijto\quad}& \bigsqcup_{\mathbf{l} \in \bigsqcup_{\mathbf{m} \in S_1 \times S_2 \times\cdots\times S_n} \sint{m_1}{m_2}\times\sint{m_2}{m_3}\times\sint{m_{n-1}}{m_n}\times\sint{m_n}{x}}\text{GT}(\mathbf{l}) \\
	\overset{E. \ref{du_nest}}{\quad=\quad}& \bigsqcup_{\mathbf{m} \in S_1 \times S_2 \times\cdots\times S_n} \left(\bigsqcup_{ \mathbf{l} \in \sint{m_1}{m_2}\times\sint{m_2}{m_3}\times\sint{m_{n-1}}{m_n}\times\sint{m_n}{x}} \text{GT}(\mathbf{l}) \right) \\
	\overset{D. \ref{def-GT}}{\quad=\quad}& \bigsqcup_{\mathbf{m} \in S_1 \times S_2 \times\cdots\times S_n} \text{GT}(\mathbf{m},x),
\end{align*}
which completes the construction. \qed
\end{construction}

\begin{construction}[Problem 5 in \cite{FK1}]\label{pi}
We construct the following sijections:
\begin{enumerate}
\item for any $\mathbf{k}=(k_1,k_2,\ldots,k_n) \in \mathbb{Z}^n$ and $i \in \{1,2,\ldots,n-1\}$,
\[
	\pi=\pi_{\mathbf{k},i} \colon \text{GT}(k_1,k_2,\ldots,k_n) \sijto -\text{GT}(k_1,k_2,\ldots,k_{i-1},k_{i+1},k_i,k_{i+2},\ldots,k_n),
\]
\item for any $\mathbf{a}=(a_1,a_2,\ldots,a_n), \mathbf{b}=(b_1,b_2,\ldots,b_n) \in \mathbb{Z}^n$ satisfying $a_i=b_i$ and $a_{i+1}=b_{i+1}$ for some $i \in \{1,2,\ldots,n-1\}$,
\[
	\sigma=\sigma_{\mathbf{a},\mathbf{b},i} \colon \bigsqcup_{\mathbf{l} \in \sint{a_1}{b_1} \times \sint{a_2}{b_2} \times \cdots \times \sint{a_n}{b_n}} \text{GT}(\mathbf{l}) \sijto (\emptyset,\emptyset).
\]
\end{enumerate}

The construction is by induction on $n$. 
First, we construct $\sigma$ from $\pi$ for the same $n$ as follows:
\begin{align*}
	&\hspace{-4em}\bigsqcup_{\mathbf{l} \in \sint{a_1}{b_1} \times \sint{a_2}{b_2} \times \cdots \times \sint{a_n}{b_n}} \text{GT}(\mathbf{l}) \\
	\hspace{4em}\quad=\quad&\begin{multlined}[t]
	\bigsqcup_{\mathbf{l} \in \sint{a_1}{b_1} \times \sint{a_2}{b_2} \times \cdots \times \sint{a_n}{b_n} \text{ s.t. } l_i < l_{i+1}} \text{GT}(\mathbf{l}) \\
	\sqcup \bigsqcup_{\mathbf{l} \in \sint{a_1}{b_1} \times \sint{a_2}{b_2} \times \cdots \times \sint{a_n}{b_n} \text{ s.t. }  l_i > l_{i+1}} \text{GT}(\mathbf{l})
	\sqcup \bigsqcup_{\mathbf{l} \in \sint{a_1}{b_1} \times \sint{a_2}{b_2} \times \cdots \times \sint{a_n}{b_n} \text{ s.t. }  l_i = l_{i+1}} \text{GT}(\mathbf{l})
	\end{multlined} \\
	\hspace{4em}\overset{\textrm{id} \sqcup (\sqcup\pi) \sqcup =}{\sijto}&
	\begin{multlined}[t] \bigsqcup_{\mathbf{l} \in \sint{a_1}{b_1} \times \sint{a_2}{b_2} \times \cdots \times \sint{a_n}{b_n} \text{ s.t. }  l_i < l_{i+1}} \text{GT}(\mathbf{l}) \\
	\sqcup \bigsqcup_{\mathbf{l} \in \sint{a_1}{b_1} \times \sint{a_2}{b_2} \times \cdots \times \sint{a_n}{b_n} \text{ s.t. }  l_i < l_{i+1}} -\text{GT}(\mathbf{l}) \sqcup (\emptyset,\emptyset) \end{multlined}\\
	\hspace{4em}\quad=\quad& \bigsqcup_{\mathbf{l} \in \sint{a_1}{b_1} \times \sint{a_2}{b_2} \times \cdots \times \sint{a_n}{b_n} \text{ s.t. }  l_i < l_{i+1}}
	(\text{GT}(\mathbf{l}) \sqcup -\text{GT}(\mathbf{l})) \\ \hspace{4em}\overset{\text{E. \ref{sij_oppo}}}{\quad\sijto\quad}& (\emptyset,\emptyset).
\end{align*}

On the other hand, we can construct $\pi$ from $\sigma$ for one smaller $n$.
Note that when $n=1$ there is nothing to prove.
Because the construction is too long, we shall describe it in Appendix. We only describe the proof for the case $n=5$ and $i=3$.
For $x_1,x_2,x_3,x_4 \in \mathbb{Z}$, we have sijections compatible with the normal statistics
\begin{align*}
	\sint{x_1}{x_2} \times \sint{x_2}{x_3} \times \sint{x_3}{x_4}
	&\sijto (\sint{x_1}{x_3} \sqcup \sint{x_3}{x_2}) \times -\sint{x_3}{x_2} \times (\sint{x_3}{x_2} \sqcup \sint{x_2}{x_4}) \\
	&\sijto -\sint{x_1}{x_3} \times \sint{x_3}{x_2} \times \sint {x_3}{x_2} \sqcup -\sint{x_1}{x_3} \times \sint{x_3}{x_2} \times \sint {x_2}{x_4} \\
	&\qquad \qquad \sqcup -\sint{x_3}{x_2} \times \sint{x_3}{x_2} \times \sint {x_3}{x_2} \sqcup -\sint{x_3}{x_2} \times \sint{x_3}{x_2} \times \sint {x_2}{x_4}.
\end{align*}
Then, we have
\begin{align*}
	\text{GT}(\mathbf{l}) =& \bigsqcup_{\mathbf{m} \in \sint{l_1}{l_2} \times \sint{l_2}{l_3} \times \sint{l_3}{l_4} \times \sint{l_4}{l_5}} \text{GT}(\mathbf{m}) \\
	\overset{C. \ref{du_dist}}{\ \sijto\ }& \bigsqcup_{\mathbf{m} \in -\sint{l_1}{l_2} \times \sint{l_2}{l_4} \times \sint{l_4}{l_3} \times \sint{l_4}{l_3}} \text{GT}(\mathbf{m})
	\sqcup \bigsqcup_{\mathbf{m} \in -\sint{l_1}{l_2} \times \sint{l_2}{l_4} \times \sint{l_4}{l_3} \times \sint{l_3}{l_5}} \text{GT}(\mathbf{m}) \\
	&\qquad \qquad \sqcup \bigsqcup_{\mathbf{m} \in -\sint{l_1}{l_2} \times \sint{l_4}{l_3} \times \sint{l_4}{l_3} \times \sint{l_4}{l_3}} \text{GT}(\mathbf{m})
	\sqcup \bigsqcup_{\mathbf{m} \in -\sint{l_1}{l_2} \times \sint{l_4}{l_3} \times \sint{l_4}{l_3} \times \sint{l_3}{l_5}} \text{GT}(\mathbf{m}) \\
	\overset{\sigma}{\ \sijto\ }& (\emptyset,\emptyset) \sqcup -\bigsqcup_{\mathbf{m} \in \sint{l_1}{l_2} \times \sint{l_2}{l_4} \times \sint{l_4}{l_3} \times \sint{l_3}{l_5}} \text{GT}(\mathbf{m})
	\sqcup (\emptyset,\emptyset) \sqcup (\emptyset,\emptyset) \\
	=& -\bigsqcup_{\mathbf{m} \in \sint{l_1}{l_2} \times \sint{l_2}{l_4} \times \sint{l_4}{l_3} \times \sint{l_3}{l_5}} \text{GT}(\mathbf{m}) =-\text{GT}(l_1,l_2,l_4,l_3,l_5). \qed
\end{align*}
\end{construction}

\begin{construction}[Problem 6 in \cite{FK1}]\label{tau}
Let $\mathbf{k}=(k_1,k_2,\ldots,k_n) \in \mathbb{Z}^n$ and $x \in \mathbb{Z}$. We construct a sijection
\[
	\tau=\tau_{\mathbf{k},x} \colon \text{GT}(k_1,k_2,\ldots,k_n)
	\sijto \bigsqcup_{i=1}^n \text{GT}(k_1,k_2,\ldots,k_{i-1},x,k_{i+1},\ldots,k_n).
\]
When $n=1$, it is trivial. In the following we assume that $n \geq 2$.
First, we construct the following sijection.
\begin{subconstruction}[Problem 3. in \cite{FK1}]\label{gamma}
We construct a sijection compatible with the normal statistic
\begin{multline*}
	\gamma=\gamma_{\mathbf{k},x} \colon \sint{k_1}{k_2} \times \sint{k_2}{k_3} \times \cdots \times \sint{k_{n-1}}{k_n} \\
	\sijto \bigsqcup_{i=1}^n \sint{k_1}{k_2} \times \sint{k_2}{k_3} \times \cdots \times \sint{k_{i-2}}{k_{i-1}} \times \sint{k_{i-1}}{x} \times \sint{x}{k_{i+1}} \times \cdots \times \sint{k_{n-1}}{k_n} \\
	\sqcup \bigsqcup_{i=1}^{n-2} \sint{k_1}{k_2} 
	\times \cdots \times \sint{k_{i-1}}{k_i} \times \sint{k_{i+1}}{x} \times \sint{k_{i+1}}{x} \times \sint{k_{i+2}}{k_{i+3}} \times \cdots \times \sint{k_{n-1}}{k_n},
\end{multline*}
where we replaced $k_i$ with $x$ in the first term and $\sint{k_i}{k_{i+1}}$ and $\sint{k_{i+1}}{k_{i+2}}$ with $\sint{k_{i+1}}{x}$ in the second term.
The construction is by induction on $n$. If $n=2$, it coincides with Example \ref{ex_sij}.
For $n>2$, we shall calculate the difference of the right hand side from that with 1 smaller $n$ multiplied by $\sint{k_{n-1}}{k_n}$. First, we have
\begin{multline*}
	\bigsqcup_{i=1}^n \sint{k_1}{k_2} \times \sint{k_2}{k_3} \times \cdots \times \sint{k_{i-2}}{k_{i-1}} \times \sint{k_{i-1}}{x} \times \sint{x}{k_{i+1}} \times \cdots \times \sint{k_{n-1}}{k_n} \\
	\sqcup -\left(\bigsqcup_{i=1}^{n-1} \sint{k_1}{k_2} \times \sint{k_2}{k_3} \times \cdots \times \sint{k_{i-2}}{k_{i-1}} \times \sint{k_{i-1}}{x} \times \sint{x}{k_{i+1}} \times \cdots \times \sint{k_{n-2}}{k_{n-1}}\right) \times \sint{k_{n-1}}{k_n} \\
	\sijto \left(\prod_{i=1}^{n-3} \sint{k_i}{k_{i+1}}\right) \times \left( \sint{k_{n-2}}{x} \times \sint{x}{k_n} \sqcup \sint{k_{n-2}}{k_{n-1}} \times \sint{k_{n-1}}{x} \sqcup -\sint{k_{n-2}}{x} \times \sint{k_{n-1}}{k_n}\right)
\end{multline*}
and
\begin{multline*}
\bigsqcup_{i=1}^{n-2} \sint{k_1}{k_2} \times \cdots \times \sint{k_{i-1}}{k_i} \times \sint{k_{i+1}}{x} \times \sint{k_{i+1}}{x} \times \sint{k_{i+2}}{k_{i+3}} \times \cdots \times \sint{k_{n-1}}{k_n} \\
\sqcup -\left(\bigsqcup_{i=1}^{n-3} \sint{k_1}{k_2} \times \cdots \times \sint{k_{i-1}}{k_i} \times \sint{k_{i+1}}{x} \times \sint{k_{i+1}}{x} \times \sint{k_{i+2}}{k_{i+3}} \times \cdots \times \sint{k_{n-2}}{k_{n-1}} \right) \times \sint{k_{n-1}}{k_n} \\
\sijto \left(\prod_{i=1}^{n-3} \sint{k_i}{k_{i+1}}\right) \times \sint{k_{n-1}}{x} \times \sint{k_{n-1}}{x}.
\end{multline*}
Here, we have a sijection
\begin{multline*}
\sint{k_{n-2}}{x} \times \sint{x}{k_n} \sqcup \sint{k_{n-2}}{k_{n-1}} \times \sint{k_{n-1}}{x} \sqcup \sint{k_{n-1}}{x} \times \sint{k_{n-1}}{x} \\
\sijto \sint{k_{n-2}}{x} \times \sint{x}{k_n} \sqcup \sint{k_{n-2}}{x} \times \sint{k_{n-1}}{x} 
\sijto \sint{k_{n-2}}{x} \times \sint{k_{n-1}}{k_n},
\end{multline*}
which means $\sint{k_{n-2}}{x} \times \sint{x}{k_n} \sqcup \sint{k_{n-2}}{k_{n-1}} \times \sint{k_{n-1}}{x} \sqcup -\sint{k_{n-2}}{x} \times \sint{k_{n-1}}{k_n} \sqcup \sint{k_{n-1}}{x} \times \sint{k_{n-1}}{x}$. Therefore, the difference is in sijection to $(\emptyset,\emptyset)$.
By Example \ref{sij_oppo}, we thus obtain the result. The compatibility can be checked step by step.
\qed
\end{subconstruction}
We return to the construction of $\tau$. By using Construction \ref{du_dist}, the sijection $\gamma$ and the sijection $\sigma$ constructed in Construction \ref{pi}, we have
\begin{align*}
	\text{GT}(\mathbf{k})\ =\ &\bigsqcup_{\mathbf{l} \in \underline{[k_1,k_2)}\times\sint{k_2}{k_3} \times\cdots\times\underline{[k_{n-1},k_n)}} \text{GT}(\mathbf{l}) \\
	\overset{\sqcup\text{id}_{\text{GT}(\mathbf{l})}}{\quad\sijto\quad}& \bigsqcup_{\mathbf{l} \in \bigsqcup_{i=1}^n \sint{k_1}{k_2} \times \sint{k_2}{k_3} \times \cdots \times \sint{k_{i-2}}{k_{i-1}} \times \sint{k_{i-1}}{x} \times \sint{x}{k_{i+1}} \times \cdots \times \sint{k_{n-1}}{k_n} } \text{GT}(\mathbf{l}) \\
	&\qquad \sqcup \bigsqcup_{\mathbf{l} \in \bigsqcup_{i=1}^{n-2} \sint{k_1}{k_2} \times \cdots \times \sint{k_{i-1}}{k_i} \times \sint{k_{i+1}}{x} \times \sint{k_{i+1}}{x} \times \sint{k_{i+2}}{k_{i+3}} \times \cdots \times \sint{k_{n-1}}{k_n}} \text{GT}(\mathbf{l}) \\
	\overset{(C. \ref{du_dist}) \sqcup \sigma}{\quad\sijto\quad}& \bigsqcup_{i=1}^n \text{GT}(k_1,k_2,\ldots,k_{i-1},x,k_{i+1},\ldots,k_n) \sqcup (\emptyset,\emptyset) \\
	\ =\ & \bigsqcup_{i=1}^n  \text{GT}(k_1,k_2,\ldots,k_{i-1},x,k_{i+1},\ldots,k_n).
\end{align*}
The construction is thus completed. \qed
\end{construction}

\subsection{
New compatibility properties
}
We define a statistic $\eta_\text{row}$ of Gelfand-Tsetlin patterns.
First, we define a set of multisets
\[
	\text{MS}(\mathbb{Z},n)  = \{ \{\!\{k_1,k_2,\ldots,k_n\}\!\} \mid k_1 \leq k_2 \leq \cdots \leq k_n \}
	\simeq \mathbb{Z}^n / \mathfrak{S}_n,
\]
where we use double braces to describe a multiset. We define $\mathcal{A}_\text{row}$ by
\begin{gather*}
	\mathcal{A}_\text{row} = \bigcup_{i=1}^\infty \mathcal{A}_\text{row}^i,\qquad
	\mathcal{A}_\text{row}^i = \text{MS}(\mathbb{Z},1) \times \text{MS}(\mathbb{Z},2) \times \cdots \times \text{MS}(\mathbb{Z},i).
\end{gather*}
Then, we define $\eta_\text{row}$ as follows.
\begin{itemize}
\item For $k \in \text{GT}(k)$, $\eta_\text{row}(k) = \{\!\{k\}\!\} \in\text{MS}(\mathbb{Z},1) = \mathcal{A}_\text{row}^1$.
\item For $T \in \text{GT}(\mathbf{k})$, where $\mathbf{k} \in \mathbb{Z}^n$ and $n \geq 2$, we have $\mathbf{l} \in \mathbb{Z}^{n-1}$ and $T' \in \text{GT}(\mathbf{l})$ such that $T=(T',\mathbf{l})$.
Then, we define \[\eta_\text{row}(T) = ( \eta_\text{row}(T'), \{\!\{k_1,k_2,\ldots,k_n\}\!\}) \in \mathcal{A}_\text{row}^{n-1} \times \text{MS}(\mathbb{Z},n) = \mathcal{A}_\text{row}^{n}.\]
\end{itemize}
For example, let $T = (4,(2,5)) \in \textrm{GT}((2,5,7))$, then
\[
	\eta_\text{row}\left( T \right) = \left(\{\!\{4\}\!\},\{\!\{2,5\}\!\},\{\!\{2,5,7\}\!\}\right).
\]
The following theorem is the main result of this section.
\begin{theorem}\label{thm::GT}
The sijections $\pi$ and $\sigma$ in Construction \ref{pi} are compatible with the statistic $\eta_\text{row}$.
\end{theorem}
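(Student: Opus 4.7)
The plan is to proceed by simultaneous induction on $n$, establishing that $\pi_n$ and $\sigma_n$ are both compatible with $\eta_\text{row}$. The base cases are routine: for $n=1$ no adjacent swap is available; for $n=2$, $\pi_2$ reduces via $\sint{l_1}{l_2} = -\sint{l_2}{l_1}$ (lifted by the identity on each $\text{GT}(m)$) to the tautological identification, which preserves row multisets, and $\sigma_2$ then follows via Example \ref{sij_oppo}.

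The key step I would first formulate and verify is a lifting lemma: if $\phi \colon A \sijto B$ is a sijection between Cartesian products of signed intervals that is compatible with the normal statistic, then the sijection obtained as a disjoint union with signed index of $\text{id}_{\text{GT}(\mathbf{m})}$ over $\mathbf{m} \in A \sqcup B$ is compatible with $\eta_\text{row}$. The reason is that compatibility with the normal statistic means $\phi$ preserves the underlying integer tuple $\mathbf{m}$, moving it only between different signed intervals in which the same tuple is recorded; meanwhile, the inner pattern $T \in \text{GT}(\mathbf{m})$ is literally unchanged by the identity. Consequently the second-to-bottom row (even as a tuple, hence a fortiori as a multiset) and all higher row-multisets of the ambient pattern are preserved. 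Combined with Lemmas \ref{lemofsqcup}, \ref{lemoftimes} and \ref{lem:comp_comp}, this shows that any composite built by lifting such $\phi$'s and interleaving with $\sigma_{n-1}$'s (compatible with $\eta_\text{row}$ by induction) is again compatible with $\eta_\text{row}$.

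With this lemma in hand, the inductive step for $\pi_n$ becomes bookkeeping. Each elementary sijection $\sint{a}{b} \sijto \sint{a}{c} \sqcup \sint{c}{b}$ of Example \ref{ex_sij} is compatible with the normal statistic, so by Lemma \ref{lemoftimes} the Cartesian-product sijection acting on the coordinates of $\mathbf{m}$ at positions around $i$ (using the 3-factor identity in the interior case, the 2-factor one at the boundary) is as well; lifting preserves $\eta_\text{row}$. The degenerate pieces appearing with a repeated factor of the form $\sint{l_{i+1}}{l_i} \times \sint{l_{i+1}}{l_i}$ in the index set are annihilated by $\sigma_{n-1}$, which is compatible by the inductive hypothesis. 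The surviving piece is $-\text{GT}(\mathbf{l}')$, where $\mathbf{l}'$ is $\mathbf{l}$ with $l_i$ and $l_{i+1}$ swapped, so its bottom-row multiset equals that of $\mathbf{l}$. The inductive step for $\sigma_n$ then follows at once from the freshly-proved compatibility of $\pi_n$: the construction pairs $T \in \text{GT}(\mathbf{l})$ (with $l_i < l_{i+1}$) against $\pi_n(T) \in -\text{GT}(\mathbf{l}')$, and the final $\text{GT}(\mathbf{l}) \sqcup -\text{GT}(\mathbf{l}) \sijto (\emptyset,\emptyset)$ via the identity is trivially compatible with any statistic.

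The main obstacle I expect lies in the lifting lemma: one must carefully distinguish the signed interval in which a coordinate of $\mathbf{m}$ is currently recorded from the underlying integer value of that coordinate, and verify rigorously that compatibility with the normal statistic forbids only the former, not the latter, from changing. Once this subtle-but-shallow point is pinned down, the mutual induction and the case split on whether $i \in \{1, n{-}1\}$ or $i \in \{2,\ldots,n{-}2\}$ become mechanical.
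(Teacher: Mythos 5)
Your proposal is correct and follows essentially the same route as the paper: a mutual induction in which $\sigma_n$ inherits compatibility from $\pi_n$ and $\pi_n$ from $\sigma_{n-1}$, with the observation that the index-set manipulations are compatible with the normal statistic (hence preserve the underlying integer tuples of the intermediate rows) while the bottom row only undergoes an adjacent transposition, which leaves the multiset unchanged. The paper's own proof is just a terser statement of exactly this argument; your ``lifting lemma'' makes explicit the step the paper leaves implicit.
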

\begin{proof}
The proof is by tracing the constructions.
First, the compatibility of $\sigma$ follows from the compatibility of $\pi$.
Next, the compatibility of $\pi$ follows from the compatibility of $\sigma$ without the bottom row.
The sijection $\pi$ acts on the bottom row as swapping an element with the next element, so it does not affect the value of $\eta_\text{row}$.
Therefore, $\pi$ and $\sigma$ are compatible with the statistic $\eta_\text{row}$.
\end{proof}
The theorem means that the sijection $\pi$ acts on $\text{GT}(\mathbf{k})$ as a permutation of elements in each row.
This result is one of the most important reasons why we use half-open intervals to describe signed intervals and Gelfand-Tsetlin patterns.
Based on the theorem, we can decompose the equinumerous relation of Gelfand-Tsetlin patterns by using the statistic $\eta_\text{row}$.
Namely,
we have
\[
	\text{GT}(k_1,k_2,\ldots,k_n) \mid_{\eta_\text{row}=A} \sijto -\text{GT}(k_1,k_2,\ldots,k_{i-1},k_{i+1},k_i,k_{i+2},\ldots,k_n) \mid_{\eta_\text{row}=A}
\]
from the compatibility with $\eta_\text{row}$ of the sijection $\pi$.
When $\mathbf{l}$ is a permutation of $\mathbf{k}$, we have $\#\text{GT}(\mathbf{k})\mid_{\eta_\text{row}=A} = \text{sgn}\, s \cdot \#\text{GT}(\mathbf{l})\mid_{\eta_\text{row}=A}$,
where $s$ is a permutation of $\{1,2,\ldots,n\}$ such that $l_i = k_{s(i)}$ and $\text{sgn}\, s$ means the sign of $s$.
Let $(a_{i,j})_{1 \leq j \leq i \leq n}$ be the elements of $A$ as
\[
	A = (\{\!\{a_{1,1}\}\!\},\{\!\{a_{2,1},a_{2,2}\}\!\},\ldots,\{\!\{ a_{n,1},a_{n,2},\ldots,a_{n,n} \}\!\}), \qquad a_{i,j} \leq a_{i,j+1} (\forall 1 \leq j < i \leq n ),
\]
and let $\mathbf{a}=(a_{n,1},a_{n,2},\ldots,a_{n,n})$.
When $(a_{i,j})_{ij}$ form an unsigned modified Gelfand-Tsetlin pattern, namely when for any $1 \leq j \leq i \leq n-1$, $a_{i+1,j} \leq a_{i,j} < a_{i+1,j+1}$ holds, 
we have $\#\text{GT}(\mathbf{a})\mid_{\eta_\text{row}=A}=1$. Therefore, we have $\#\text{GT}(\tilde{\mathbf{a}})\mid_{\eta_\text{row}=A} = \text{sgn}\,s$, where $s \in \mathfrak{S}_n$ and $\tilde{\mathbf{a}}_i = \mathbf{a}_{s(i)}$.
Otherwise, when $(a_{i,j})_{ij}$ do not form a classical Gelfand-Tsetlin pattern, $\#\text{GT}(\tilde{\mathbf{a}})\mid_{\eta_\text{row}=A} = 0$ for any $\tilde{\mathbf{a}} \in \mathbb{Z}^n$.
To describe what occurs in this situation, we need some new notions.
\begin{definition}
\textit{A system of combinatorial objects} consists of
\begin{itemize}
\item a set $\{A_\lambda\}_{\lambda \in \Lambda}$ of combinatorial objects,
\item a set $\{ \varphi_\mu \}_{\mu \in M}$ of sijections between combinatorial objects,
\item a source function $s \colon M \to \Lambda$ and a target function $t \colon M \to \Lambda$ such that $\varphi_\mu \colon A_{s(\mu)} \sijto A_{t(\mu)}$ holds for any $\mu \in M$.
\end{itemize}
We define an \textit{integrability condition} for a pair of combinatorial objects $(A_\lambda,A_{\lambda'})$ as follows:
\begin{quote}
	There exists a path of sijections from $A_\lambda$ to $A_{\lambda'}$, namely $(\mu_1, \mu_2, \ldots, \mu_k) \in M^k$ such that $s(\mu_1) = \lambda$, $t(\mu_i) = s(\mu_{i+1})$ for any $1 \leq i \leq k-1$ and $t(\mu_k)=\lambda'$.
	In addition, the composition $\varphi_{\mu_k} \circ \varphi_{\mu_{k-1}} \circ \cdots \circ \varphi_{\mu_1}$ is independent to the choice of the path.
\end{quote}
When any pair of combinatorial objects meets the integrability condition, we say the system is \textit{integrable}, and when any pair $(A_\lambda,A_{\lambda'})$ of combinatorial objects such that $A_\lambda^- = A_{\lambda'}^- = \emptyset$ meets the integrability condition, we say the system is \textit{partially integrable}.
\end{definition}
The following lemma is useful to prove a system is partially integrable.
\begin{lemma}\label{lem::sys}
Let $(\{A_\lambda\}_{\lambda \in \Lambda},\{ \varphi_\mu \}_{\mu \in M},s,t)$ be a system of combinatorial objects.
When the system is strongly connected, namely for any pair $(\lambda,\lambda') \in \Lambda^2$ there exists a path of sijections from $A_\lambda$ to $A_{\lambda'}$, the system is partially integrable if it meets the following condition:
\begin{quote}
	There exists $\lambda \in \Lambda$ such that $A_\lambda^-=\emptyset$ and $(A_\lambda,A_\lambda)$ meets the integrability condition.
\end{quote}
\end{lemma}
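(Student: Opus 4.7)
The plan is to prove the biconditional by leveraging strong connectedness to reduce integrability of any admissible pair to integrability of the loop at a single distinguished vertex $\lambda_0$, using the fact that a sijection between two signed sets with empty minus parts is simply an ordinary bijection of the plus parts, and hence invertible.

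The forward direction is immediate: if the system is partially integrable, then taking $\lambda = \lambda'$ to be any vertex with $A_\lambda^- = \emptyset$ (which one may assume exists, since otherwise both sides of the biconditional are discussing empty content), the pair $(A_\lambda, A_\lambda)$ meets the integrability condition by the very definition of partial integrability, furnishing the required $\lambda$.

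For the backward direction, fix $\lambda_0$ with $A_{\lambda_0}^- = \emptyset$ such that $(A_{\lambda_0}, A_{\lambda_0})$ is integrable. Given any $\lambda, \lambda'$ with $A_\lambda^- = A_{\lambda'}^- = \emptyset$ and two paths $P_1, P_2$ from $A_\lambda$ to $A_{\lambda'}$, the goal is to show $\text{comp}(P_1) = \text{comp}(P_2)$. Using strong connectedness, choose a path $R$ from $A_{\lambda_0}$ to $A_\lambda$ and a path $S$ from $A_{\lambda'}$ to $A_{\lambda_0}$. The concatenations $R P_1 S$ and $R P_2 S$ are both loops at $\lambda_0$, so by the hypothesis on $\lambda_0$ their compositions coincide:
\[
\text{comp}(S) \circ \text{comp}(P_1) \circ \text{comp}(R) = \text{comp}(S) \circ \text{comp}(P_2) \circ \text{comp}(R).
\]
Since $A_{\lambda_0}$, $A_\lambda$, and $A_{\lambda'}$ all have empty minus parts, each of $\text{comp}(R)$, $\text{comp}(S)$, $\text{comp}(P_1)$, $\text{comp}(P_2)$ is, by the very definition of a sijection, an ordinary bijection between plus parts, and in particular $\text{comp}(R)$ and $\text{comp}(S)$ are invertible as set maps. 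Cancelling them yields $\text{comp}(P_1) = \text{comp}(P_2)$, whence $(A_\lambda, A_{\lambda'})$ is integrable.

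The only subtle point is that the intermediate objects along $R$, $S$, and $P_i$ typically do have nonempty minus parts, so the individual arrows $\varphi_{\mu_i}$ are genuine sijections and the composition recipe of Lemma \& Definition \ref{lem:comp} is needed. This is not a real obstacle: as soon as the source and target of a composed sijection have empty minus parts, the composition automatically reduces to an invertible bijection between the plus parts, which is exactly what the final cancellation step requires. Associativity of sijection composition, already established in the paper, guarantees that the concatenation manipulations used above are unambiguous.
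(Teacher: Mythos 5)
Your proof is correct and follows essentially the same route as the paper: both reduce path-independence for an arbitrary pair $(A_\lambda,A_{\lambda'})$ with empty minus parts to the integrability of the loop at the distinguished vertex, using strong connectedness to form the loops and the fact that a composed sijection between signed sets with empty minus parts is an ordinary (hence cancellable) bijection. Your variant of forming genuine concatenated loops $RP_iS$ rather than conjugating by chosen sijections and their inverses is, if anything, slightly more careful about what is actually a path in the system, and your aside on the degenerate case where no $A_\lambda$ has empty minus part is a reasonable reading of an edge case the paper dismisses as trivial.
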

\begin{proof}
Let $\lambda_1,\lambda_2 \in \Lambda$ such that $A_{\lambda_1}^- = A_{\lambda_2}^- = \emptyset$.
Since the system is strongly connected, there is a path of sijections from $A_{\lambda_1}$ to $A_{\lambda_2}$.
Let $\psi,\tilde{\psi}$ be compositions of sijections from $A_{\lambda_1}$ to $A_{\lambda_2}$ along two different paths.
Since $A_{\lambda_1}^- = A_{\lambda_2}^-=\emptyset$, they are ordinary bijections.
Similarly, we can take sijections $\varphi_1 \colon A_\lambda \sijto A_{\lambda_1}$, $\varphi_2 \colon A_\lambda \sijto A_{\lambda_2}$ along some paths in the system and they are also bijections.
From the integrability condition for $(A_\lambda,A_\lambda)$, we have $\varphi_2 \circ \psi \circ \varphi_1^{-1} = \varphi_2 \circ \tilde{\psi} \circ \varphi_1^{-1}$, namely $\psi=\tilde{\psi}$.
Since this means that $(A_{\lambda_1},A_{\lambda_2})$ meets the integrability condition, the system is partially integrable.
\end{proof}

\begin{remark}
In this lemma, if the system has an ordinary set as its objects, the converse of the lemma holds by definition.
\end{remark}
\begin{remark}
In general, it does not holds that
\[
	f \circ g = f \circ h \Rightarrow g=h
\]
for sijections $f$, $g$ and $h$, unlike the case of bijections. For example, let $f$ and $g$ be sijections from $(\emptyset,\emptyset)$ to $(\{a,b\},\{c,d\})$ such that $f(a)=c$, $f(b)=d$, $g(a)=d$, $g(b)=c$. Then, $f \circ g = f \circ f \colon (\emptyset,\emptyset) \sijto (\emptyset,\emptyset)$ holds, but $f \ne g$.
\end{remark}

In bijective (as well as in \textit{sijective}) combinatorics, our goal is to construct larger (partially) integrable systems of combinatorial objects.
In this sense, to construct a compatible sijection is a weak goal.
For a strongly connected system $(\{A_\lambda\}_{\lambda \in \Lambda},\{ \varphi_\mu \}_{\mu \in M},s,t)$ and a statistic $\eta$ on $\bigsqcup_{\lambda \in \Lambda} \bs{A_\lambda}$,
we say the system is compatible with $\eta$ when each $\varphi_\mu$ is compatible with $\eta$.
If the statistic $\eta$ is injective on $\bs{A_\lambda}$ (note that this requires $A_\lambda^- = \emptyset$), according to the lemma, the system is partially integrable.

For instance, in the case of Gelfand-Tsetlin patterns the system with $\mathbf{k} \in \mathbb{Z}^n$ is described as
\begin{itemize}
\item a set of combinatorial objects: $\{ \text{GT}(\mathbf{l}) \mid { \mathbf{l} \text{ is a permutation of } \mathbf{k}} \}$,
\item a set of sijections: $\{\pi_{\mathbf{l},i} \mid \mathbf{l} \text{ is a permutation of } \mathbf{k} \text{ and } i \in \{1,2,\ldots,n\} \}$.
\end{itemize}
Then the system is compatible with $\eta_\text{row}$.
Since $\eta_\text{row}$ is injective on $\bs{\text{GT}(\mathbf{k})}$ when $\mathbf{k}$ is increasing, the system is partially integrable.
Therefore, the above discussion results in the following corollary.
\begin{corollary}
Let $\mathbf{k} \in \mathbb{Z}^n$ and $\mathfrak{S}\mathbf{k} = \{ \mathbf{l} \mid \mathbf{l} \text{ is a permutation of } \mathbf{k} \}$.
Then, $\{\text{GT}(\mathbf{l})\}_{\mathbf{l} \in \mathfrak{S}\mathbf{k}}$ and $\{\pi_{\mathbf{l},i}\}_{\mathbf{l} \in \mathfrak{S}\mathbf{k}, i \in \{1,2,\ldots,n\}}$ form a partially integrable system of combinatorial objects.
\end{corollary}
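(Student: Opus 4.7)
My plan is to invoke Lemma \ref{lem::sys}, so I need to check two things: that the system is strongly connected, and that some distinguished $\lambda$ exists with $A_\lambda^- = \emptyset$ such that $(A_\lambda, A_\lambda)$ meets the integrability condition. The compatibility with $\eta_\text{row}$ established in Theorem \ref{thm::GT} will do the heavy lifting.

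For strong connectedness, I would simply note that by Construction \ref{pi} each generator $\pi_{\mathbf{l},i}$ is a sijection from $\text{GT}(\mathbf{l})$ to $-\text{GT}(\mathbf{l}')$, where $\mathbf{l}'$ is obtained from $\mathbf{l}$ by the adjacent transposition swapping positions $i$ and $i+1$. Since adjacent transpositions generate $\mathfrak{S}_n$, there is a path in the system from $\text{GT}(\mathbf{l})$ to $\text{GT}(\mathbf{l}')$ for any two permutations $\mathbf{l}, \mathbf{l}' \in \mathfrak{S}\mathbf{k}$.

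For the base point, I take $\mathbf{k}^\uparrow$, the weakly increasing rearrangement of $\mathbf{k}$. There are two cases. If $\mathbf{k}$ has a repeated entry, then $\mathbf{k}^\uparrow$ has two adjacent equal entries $k^\uparrow_i = k^\uparrow_{i+1}$, whence $\sint{k^\uparrow_i}{k^\uparrow_{i+1}} = (\emptyset,\emptyset)$ collapses the Cartesian product indexing $\text{GT}(\mathbf{k}^\uparrow)$, so $\text{GT}(\mathbf{k}^\uparrow) = (\emptyset,\emptyset)$; then $\text{GT}(\mathbf{k}^\uparrow)^- = \emptyset$ and every composition of sijections from $\text{GT}(\mathbf{k}^\uparrow)$ to itself is trivially the empty map. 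Otherwise $\mathbf{k}^\uparrow$ is strictly increasing and, by unwinding the recursive definition with half-open intervals, $\text{GT}(\mathbf{k}^\uparrow)$ is precisely the set of (modified) classical Gelfand--Tsetlin patterns with bottom row $\mathbf{k}^\uparrow$, so $\text{GT}(\mathbf{k}^\uparrow)^- = \emptyset$. In this case, any loop of sijections $\varphi$ from $\text{GT}(\mathbf{k}^\uparrow)$ to itself is an honest bijection on $\text{GT}(\mathbf{k}^\uparrow)^+$, and by Theorem \ref{thm::GT} together with Lemma \ref{lem:comp_comp} it is compatible with $\eta_\text{row}$; since $\eta_\text{row}$ is injective on $\bs{\text{GT}(\mathbf{k}^\uparrow)}$, the bijection $\varphi$ must be the identity, and hence the composition along any path is path-independent.

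The main obstacle I anticipate is justifying the injectivity of $\eta_\text{row}$ on $\bs{\text{GT}(\mathbf{k}^\uparrow)}$ cleanly. The argument is that a pattern is determined by its sequence of rows, and each row in turn is determined by its multiset together with the strict inequality $a_{i+1,j} \leq a_{i,j} < a_{i+1,j+1}$ imposed between consecutive rows, which forces the unique increasing arrangement of each multiset. This step is the whole reason the author adopted half-open signed intervals instead of the closed intervals of \cite{FK1}; without the strict inequality, the multiset would not determine the row. Once this injectivity is in hand, Lemma \ref{lem::sys} finishes the proof immediately.
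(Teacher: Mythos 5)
Your proposal is correct and follows essentially the same route as the paper: the paper derives the corollary from the compatibility of the $\pi_{\mathbf{l},i}$ with $\eta_\text{row}$ (Theorem \ref{thm::GT}) together with the observation that $\eta_\text{row}$ is injective on $\bs{\text{GT}(\mathbf{k})}$ for increasing $\mathbf{k}$, then applies Lemma \ref{lem::sys}. You merely spell out the details the paper leaves implicit (strong connectedness via adjacent transpositions, the degenerate case of repeated entries where $\text{GT}(\mathbf{k}^\uparrow)=(\emptyset,\emptyset)$, and the injectivity argument via strictly increasing rows), all of which check out.
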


\begin{remark}
In this remark, we explain why the definition of ``partially integrable'' is needed, or in other words why ``integrable'' is too strict.
For this purpose, we give an example of a partially integrable system of combinatorial objects which is not integrable.
Let $\varphi \colon S \sijto T$ be the sijection constructed in Remark \ref{rem-2.2.2}.
We consider the system described as
\[\begin{tikzcd}
&S \times T \arrow[rd,phantom,"\overset{\varphi \times \text{id}_T}{\sijto}",sloped,yshift=.5em] & \\[-1em]
S \times S \arrow[rd,phantom,"\overset{\varphi \times \text{id}_S}{\sijto}",sloped] \arrow[ru,phantom,"\overset{\text{id}_S \times \varphi}{\sijto}",sloped,yshift=.5em] && T \times T \\[-1em]
& T \times S \arrow[ru,phantom,"\overset{\text{id}_T \times \varphi}{\sijto}",sloped] & .
\end{tikzcd}\]
Here, when we define a system by a diagram like as above, ``$\psi \colon U \sijto V$'' in a diagram implies that the system includes not only $\psi \colon U \sijto V$ but also $\psi^{-1} \colon V \sijto U$ .
We showed that $(\text{id}_T \times \varphi) \circ (\varphi \times \text{id}_S) \ne  (\varphi \times \text{id}_T) \circ (\text{id}_S \times \varphi)$ in the remark.
Therefore, the system is not integrable.
However, by Lemma \ref{lem::sys}, this system is partially integrable because of $\#(S \times S)^+ = 1$ and $\#(S \times S)^- = 0$.

This example is very simple and a similar situation occurs in general when we take the Cartesian product of two or more sijection that have some cancellations.
Therefore, there are no integrable systems that do not involve at least some trivial cases.
\end{remark}

Last, we give two proofs of the signed enumeration of the Gelfand-Tsetlin patterns, one of which is combinatorial and the other is partially computational.
The enumeration for a strictly increasing $\mathbf{k} \in \mathbb{Z}^n$ is known to be (for example, see \cite{EC2})
\[
	\#\text{GT}(\boldsymbol{k})
	 = \prod_{1 \leq i<j \leq n} \frac{k_j-k_i}{j-i},
\]
and we shall prove that the result is true for general cases. 
First, let us prepare some definitions.
For $\mathbf{k} \in \mathbb{Z}^n$, we define $\text{sgn}(\mathbf{k})$ as the sign of a permutation $s \in \mathfrak{S}_n$ such that $k_{s(1)}<k_{s(2)}<\cdots<k_{s(n)}$ when all elements of $\mathbf{k}$ are distinct. Otherwise, we define $\text{sgn}(\mathbf{k})=0$.
Let $\mathbf{k}^\text{inc}$ be a weakly increasing sequence such that $\{\!\{\mathbf{k}\}\!\} = \{\!\{\mathbf{k}^\text{inc}\}\!\}$.
For a signed set $S$ and numbers $\{a_s\}_{s \in \bs{S}}$, we define
\[
	\sum_{s \in S} a_s = \sum_{s \in S^+} a_s - \sum_{s \in S^-} a_s.
\]

\begin{proof}[Combinatorial proof of the signed enumeration]
When $\text{sgn}(\mathbf{k}) \ne 0$, we have a sijection $\text{GT}(\mathbf{k}) \sijto \text{sgn}(\mathbf{k}) \text{GT}(\mathbf{k}^\text{inc})$ according to Construction \ref{pi}.
Therefore,
\[
\#\text{GT}(\mathbf{k}) = \text{sgn}(\mathbf{k}) \cdot \#\text{GT}(\mathbf{k}^\text{inc}) = \prod_{1 \leq i<j \leq n} \frac{k_j-k_i}{j-i}.
\]
When $\text{sgn}(\mathbf{k})=0$, similarly we have a sijection $\text{GT}(\mathbf{k}) \sijto \text{GT}(\mathbf{k}^\text{inc})$ and since $\mathbf{k}$ has duplicate elements, we have 
\[
\#\text{GT}(\mathbf{k}) = 0 = \prod_{1 \leq i<j \leq n} \frac{k_j-k_i}{j-i}. \qedhere
\]
\end{proof}

\begin{proof}[Computational proof of the signed enumeration]
It is sufficient to prove the following claim.
\begin{claim}\label{prop::GT_det}
Let $A=(A_1,A_2,\ldots,A_n)\in \mathcal{A}_\text{row}^n$
 such that $\{\!\{\mathbf{k}\}\!\} = A_n$.
Then, we have
\[
	\#\text{GT}(\mathbf{k})\mid_{\eta_\text{row}=A} = \begin{cases}
		\text{sgn}(\mathbf{k}) & \text{ when $A$ forms an unsigned modified Gelfand-Tsetlin pattern},\\
		0 & \text{ otherwise}.
	\end{cases}
\]
\end{claim}
The proof is given by induction on $n$. For $n=1$, this is trivial.
For $n>1$, let $A' \in \mathcal{A}_\text{row}^{n-1}$ such that $A = (A',\{\!\{\mathbf{k}\}\!\})$. Then, we have
\[
	\#\text{GT}(\mathbf{k})\mid_{\eta_\text{row}=A} = \sum_{\mathbf{l} \in \sint{k_1}{k_2} \times \sint{k_2}{k_3} \times \cdots \times \sint{k_{n-1}}{k_n}} \#\text{GT}(\mathbf{l}) \mid_{\eta_\text{row}=A'}.
\]
Let $a_1\leq a_2\leq \cdots\leq a_{n-1}$ be the elements of $A_{n-1}$.
If they are not distinct, $A'$ does not form a classical Gelfand-Tsetlin pattern and neither does $A$.
Therefore, in this case we obtain the result that $\#\text{GT}(\mathbf{k})\mid_{\eta_\text{row}=A} = 0$.
In the following, we assume that $a_1 < a_2 < \cdots < a_{n-1}$.

Now, with the assumption of the induction, we have
\begin{align*}
	\#\text{GT}(\mathbf{k})\mid_{\eta_\text{row}=A} &= \sum_{\mathbf{l} \in \sint{k_1}{k_2} \times \sint{k_2}{k_3} \times \cdots \times \sint{k_{n-1}}{k_n}} \#\text{GT}(\mathbf{l}) \mid_{\eta_\text{row}=A'} \\
	&=\sum_{\mathbf{l} \in \sint{k_1}{k_2} \times \sint{k_2}{k_3} \times \cdots \times \sint{k_{n-1}}{k_n} \text{ s.t. } \{\!\{ \mathbf{l} \}\!\} = A_{n-1}} \#\text{GT}(\mathbf{l}) \mid_{\eta_\text{row}=A'} \\
	&=1_\text{$A'$ forms a classical Gelfand-Tsetlin pattern} \\
	&\hspace{100pt}\times \sum_{\mathbf{l} \in \sint{k_1}{k_2} \times \sint{k_2}{k_3} \times \cdots \times \sint{k_{n-1}}{k_n} \text{ s.t. } \{\!\{ \mathbf{l} \}\!\} = A_{n-1}} \text{sgn}(\mathbf{l}).
\end{align*}

For $i \in \{1,2,\ldots,n\}$, let $t_i$ be the integer such that $a_{t_i-1} < k_i \leq a_{t_i}$, where we define $a_0 = -\infty$ and $a_n = +\infty$.
Then, it is sufficient to show that
\[
	\sum_{\mathbf{l} \in \sint{k_1}{k_2} \times \sint{k_2}{k_3} \times \cdots \times \sint{k_{n-1}}{k_n} \text{ s.t. } \{\!\{ \mathbf{l} \}\!\} = A_{n-1}} \text{sgn}(\mathbf{l})
	=\begin{cases}
	\text{sgn}(\mathbf{k}) & \text{if }t_1,t_2,\ldots,t_n \text{ are distinct}, \\
	0 & \text{otherwise},
	\end{cases}
\]
because $A$ forms a classical Gelfand-Tsetlin pattern if and only if so does $A'$ and $t_1,t_2,\ldots,t_n$ are distinct.
Let $e_1,e_2,\ldots,e_{n-1}$ be the canonical basis of $\mathbb{R}^{n-1}$ and $f_i = \sum_{j=1}^{i-1} e_j$ for $i \in \{1,2,\ldots,n\}$.
Then, we have
\[
	\sum_{\mathbf{l} \in \sint{k_1}{k_2} \times \sint{k_2}{k_3} \times \cdots \times \sint{k_{n-1}}{k_n} \text{ s.t. } \{\!\{ \mathbf{l} \}\!\} = A_{n-1}} \text{sgn}(\mathbf{l})
	= \text{det}\left( f_{t_2}-f_{t_1}, f_{t_3}-f_{t_2}, \ldots, f_{t_n}-f_{t_{n-1}}\right).
\]

The reason why this determinant formula holds is the following:
\begin{quote}
Taking an element $\mathbf{l}$ of $\sint{k_1}{k_2} \times \sint{k_2}{k_3} \times \cdots \times \sint{k_{n-1}}{k_n}$ such that $\{\!\{ \mathbf{l} \}\!\} = A_{n-1}$, is equivalent to assigning each element of $A_{n-1} = \{\!\{a_1,a_2,\ldots,a_{n-1}\}\!\}$ to an element of $\mathbf{l} = (l_1,l_2,\ldots,l_{n-1})$ one-to-one, such that $a_i$ corresponds to $l_j$ with $a_i \in \sint{k_j}{k_{j+1}}$, including its sign. By the definition of $t_j$ and $t_{j+1}$, $a_i \in \sint{k_j}{k_{j+1}}$ means that $i \in \sint{t_j}{t_{j+1}}$.
From the viewpoint of $l_j$ this assignment is identical to a bijection $\sigma \in  \mathfrak{S}_{n-1} ; j \mapsto i = \sigma(j)$ such that $\sigma(j) \in \sint{t_j}{t_{j+1}}$. Furthermore, $\text{sgn}(\mathbf{l})$ is equal to $\text{sgn}(\sigma)$ by the definition of $\text{sgn}(\mathbf{l})$, and we obtain the determinant formula.
\end{quote}

If $t_i = t_j$ for some $1 \leq i < j \leq n$, then $f_{t_{i+1}}-f_{t_i},f_{t_{i+2}}-f_{t_{i+1}},\ldots,f_{t_{j}}-f_{t_{j-1}}$ are not linearly independent.
Thus, if $t_1,t_2,\ldots,t_n$ are not distinct, the determinant is equal to zero.
Otherwise, we will show that the determinant is equal to the sign of $\mathbf{T} = (t_1,t_2,\ldots,t_n)$, which is equal to $\text{sgn}(\mathbf{k})$.
First, let
\[
	F(x_1,x_2,\ldots, x_n) = \text{det}\left( f_{x_1}-f_{x_2}, f_{x_2}-f_{x_3}, \ldots, f_{x_{n-1}}-f_{x_n}\right),
\]
and then $F$ has the anti-symmetric property. In fact, we have
\begin{align*}
F(x_1,x_2,\ldots,x_n)
&=\text{det}\left(f_{x_1}-f_{x_2},f_{x_2}-f_{x_3},\ast,\ldots,\ast\right) \\
&=\text{det}\left(f_{x_1}-f_{x_2},(f_{x_1}-f_{x_2})+(f_{x_2}-f_{x_3}),\ast,\ldots,\ast\right)\\
&=-\text{det}\left(f_{x_2}-f_{x_1},f_{x_1}-f_{x_3},\ast,\ldots,\ast\right) \\
&=-F(x_2,x_1,x_3,\ldots,x_n),
\end{align*}
\begin{align*}
F(x_1,x_2,\ldots,x_n)
&=\text{det}\left(\ast,\ldots,\ast,f_{x_{n-2}}-f_{x_{n-1}},f_{x_{n-1}}-f_{x_n}\right) \\
&=\text{det}\left(\ast,\ldots,\ast,(f_{x_{n-2}}-f_{x_{n-1}})+(f_{x_{n-1}}-f_{x_n}),f_{x_{n-1}}-f_{x_n}\right) \\
&=-\text{det}\left(\ast,\ldots,\ast,f_{x_{n-2}}-f_{x_n},f_{x_n}-f_{x_{n-1}}\right) \\
&=-F(x_1,\ldots,x_{n-2},x_{n},x_{n-1}),
\end{align*}
and for all $i \in \{2,3,\ldots, n-2\}$
\begin{align*}
&F(x_1,x_2,\ldots,x_n) \\
&\quad=\text{det}\left(\ast,\ldots,\ast,f_{x_{i-1}}-f_{x_{i}},f_{x_{i}}-f_{x_{i+1}},f_{x_{i+1}}-f_{x_{i+2}},\ast,\ldots,\ast\right) \\
&\quad= \text{det}\left(\ast,\ldots,\ast,(f_{x_{i-1}}-f_{x_{i}})+(f_{x_{i}}-f_{x_{i+1}}),f_{x_{i}}-f_{x_{i+1}},(f_{x_{i+1}}-f_{x_{i+2}})+(f_{x_{i}}-f_{x_{i+1}}),\ast,\ldots,\ast\right) \\
&\quad= \text{det}\left(\ast,\ldots,\ast,f_{x_{i-1}}-f_{x_{i+1}},f_{x_{i}}-f_{x_{i+1}},f_{x_{i}}-f_{x_{i+2}},\ast,\ldots,\ast\right) \\
&\quad= -\text{det}\left(\ast,\ldots,\ast,f_{x_{i-1}}-f_{x_{i+1}},f_{x_{i+1}}-f_{x_{i}},f_{x_{i}}-f_{x_{i+2}},\ast,\ldots,\ast\right) \\
&\quad=-F(x_1,\ldots,x_{i-1},x_{i+1},x_i,x_{i+2},\ldots,x_n).
\end{align*}
Now, the determinant is evaluated as follows:
\begin{align*}
\text{det}\left( f_{t_2}-f_{t_1}, f_{t_3}-f_{t_2}, \ldots, f_{t_n}-f_{t_{n-1}}\right) &= (-1)^{n-1} F(t_1,t_2,\ldots,t_n) \\
&=\text{sgn}(\mathbf{T}) \cdot(-1)^{n-1}  F(1,2,\ldots,n) \\
&=\text{sgn}(\mathbf{T}) \cdot \text{det}\left( f_{2}-f_{1}, f_{3}-f_{2}, \ldots, f_{n}-f_{n-1}\right)\\
&=\text{sgn}(\mathbf{T}) \cdot \text{det}\left( e_1, e_2, \ldots, e_{n-1} \right)\\
&=\text{sgn}(\mathbf{T}). \qedhere
\end{align*}

\end{proof}

\subsection{
Gelfand-Tsetlin Graph sequences}
In this subsection, we will define a generalization of modified Gelfand-Tsetlin patterns and we extend the scope of the results in the previous subsection to these objects.
The generalization is also a generalization of the notion of Gelfand-Tsetlin tree sequences, which is introduced in \cite{Fis}.
In particular, we prove a generalization of Claim \ref{prop::GT_det} and a signed enumeration of the generalized Gelfand-Tsetlin patterns,
which leads us to the notion of ``Gelfand-Tsetlin graph sequences'', a more generalized version of Gelfand-Tsetlin tree sequences.

\begin{definition}
Let $n$ be a positive integer and $\mathbf{k} \in \mathbb{Z}^n$, and for $1 \leq j \leq i \leq n-1$ let $p_{i,j},q_{i,j}$ be positive integers less than or equal to $i+1$.
Then, we define the signed set $\text{GGT}\left(\mathbf{k};\{p_{i,j}\},\{q_{i,j}\}\right)$ of \textit{generalized Gelfand-Tsetlin patterns} with bottom row $\mathbf{k}$ and parameters $\{p_{i,j}\},\{q_{i,j}\}$ as:
\begin{itemize}
\item when $n=1$, $\text{GGT}\left(\mathbf{k};\{p_{i,j}\},\{q_{i,j}\}\right) = (\{\cdot\},\emptyset)$,
\item otherwise, \begin{multline*}
\displaystyle \text{GGT}\left(\mathbf{k};\{p_{i,j}\},\{q_{i,j}\}\right) \\
=\bigsqcup_{\mathbf{l} \in K} 
\text{GGT}\left(\mathbf{l};\{p_{i,j}\}_{1 \leq j \leq i \leq n-2},\{q_{i,j}\}_{1 \leq j \leq i \leq n-2}\right), \\
\text{where $K=\sint{k_{p_{n-1,1}}}{k_{q_{n-1,1}}} \times \sint{k_{p_{n-1,2}}}{k_{q_{n-1,2}}} \times \cdots \times \sint{k_{p_{n-1,n-1}}}{k_{q_{n-1,n-1}}}$.}
\end{multline*}
\end{itemize}
\end{definition}
When we set $p_{i,j} = j$ and $q_{i,j} = j+1$ for all $1 \leq i \leq j \leq n-1$, then this definition coincides with the definition of modified Gelfand-Tsetlin patterns $\text{GT}(\mathbf{k})$.
This definition is also a generalization of (modified) Gelfand-Tsetlin tree sequences.
We consider the graph $G_i$ for $i \in \{1,2,\ldots,n\}$ with vertices $\{ 1,2,\ldots,i+1 \}$ and (named) edges $\{ j \colon p_{i,j} \to q_{i,j} \mid 1 \leq j \leq i\}$, where $s \to t$ denotes a directed edge from $s$ to $t$.
When all $G_i$ are trees (note that here the directions of edges do not matter), the definition of the generalized Gelfand-Tsetlin patterns coincides with a signed set of Gelfand-Tsetlin tree sequences with some parameters, or its opposite.
To explain the relation between them, we define the sign of parameters $\{p_{i,j}\},\{q_{i,j}\}$.

\begin{definition} \label{def::sgn_param}
Let $n$ be a positive integer and for $1 \leq j \leq i \leq n-1$ let $p_{i,j}$ and $q_{i,j}$ be positive integers less than or equal to $i+1$.
Let $G_i$ be the graph described above. We define the sign of $(\{p_{i,j}\},\{q_{i,j}\})$ as follows and denote it by $\text{sgn}(\{p_{i,j}\},\{q_{i,j}\})$:
\begin{itemize}
\item when $G_i$ is not a tree for some $i \in \{1,2,\ldots,n-1\}$, the sign is $0$.
\item otherwise, the sign is $\prod_{i=1}^{n-1} (-1)^{\#\{j \mid 1 \leq j \leq i,\,p_{i,j}=r_{i}(j) \}} \cdot \text{sgn}(r_i)$, where $r_i$ is a bijection from $\{0,1,\ldots,i\}$ to $\{1,2,\ldots,i+1\}$ constructed as below and its sign is defined by $\text{sgn}(r_i) = \text{sgn}(r_i-1)$ (note that $r_i-1$ is a permutation of $\{0,1,\ldots,i\}$):
\begin{quote}
We choose $r_i(0)\in \{1,2,\ldots,i+1\}$ arbitrarily. For $j \in \{1,2,\ldots,i\}$, if $p_{i,j}$ is further from $r_i(0)$ than $q_{i,j}$ in $G_i$ with respect to the shortest path distance, let $r_i(j)=p_{i,j}$ and otherwise let $r_i(j)=q_{i,j}$.
\end{quote}
In fact, the sign of $r_i$ is independent of the choice of $r_i(0)$. For a proof, see \cite[after equation (2.1)]{Fis}.
\end{itemize}
\end{definition}

When the sign is $+1$, the definition of the generalized Gelfand-Tsetlin patterns coincides with the signed set of Gelfand-Tsetlin tree sequences and when the sign is $-1$ it coincides with the opposite.
Conversely, Gelfand-Tsetlin tree sequences are always expressed as generalized Gelfand-Tsetlin patterns with some parameters.
The notion of generalized Gelfand-Tsetlin patterns is a true generalization of Gelfand-Tsetlin tree sequences, but we will prove that $\#\text{GGT}\left(\mathbf{k};\{p_{i,j}\},\{q_{i,j}\}\right) =0$ when some $G_i$ is not a tree (See proposition \ref{prop::GGT}).

We define the statistic $\eta_\text{row}$ in a similar way for Gelfand-Tsetlin patterns, namely
\begin{itemize}
\item for $k \in \text{GGT}(k;\emptyset,\emptyset)$, $\eta_\text{row}(k) = \{\!\{k\}\!\} \in\text{MS}(\mathbb{Z},1) = \mathcal{A}_\text{row}^1$.
\item for $T \in \text{GGT}\left(\mathbf{k};\{p_{i,j}\},\{q_{i,j}\}\right)$, where $\mathbf{k} \in \mathbb{Z}^n$ and $n \geq 2$,
we have $T' \in \text{GGT}\big(\mathbf{l};\{p_{i,j}\}_{1 \leq j \leq i \leq n-2},$ $\{q_{i,j}\}_{1 \leq j \leq i \leq n-2}\big)$ and $\mathbf{l} \in \mathbb{Z}^{n-1}$ such that $T=(T',\mathbf{l})$.
Then, we define \[\eta_\text{row}(T) = ( \eta_\text{row}(T'), \{\!\{k_1,k_2,\ldots,k_n\}\!\}) \in \mathcal{A}_\text{row}^{n-1} \times \text{MS}(\mathbb{Z},n) = \mathcal{A}_\text{row}^{n}.\]
\end{itemize}

\begin{proposition}[cf. Claim \ref{prop::GT_det} and the enumeration of modified Gelfand-Tsetlin patterns] \label{prop::GGT}
Let $n$ be a positive integer and $\mathbf{k} \in \mathbb{Z}^n$, and for $1 \leq j \leq i \leq n-1$ let $p_{i,j}$ and $q_{i,j}$ be positive integers less than or equal to $i+1$.
Moreover, let $A$ be an element of $\mathcal{A}_\text{row}^n$ such that $\{\!\{\mathbf{k}\}\!\} = A_n \in \text{MS}(\mathbb{Z},n)$.
Then, we have
\begin{multline*}
	\#\text{GGT}\left(\mathbf{k};\{p_{i,j}\},\{q_{i,j}\}\right) \mid_{\eta_\text{row}=A} 
	\\
	= \begin{cases}
		\text{sgn}(\mathbf{k}) \cdot \text{sgn}(\{p_{i,j}\},\{q_{i,j}\}) & \text{ when $A$ forms an unsigned modified Gelfand-Tsetlin pattern},\\
		0 & \text{ otherwise}.
	\end{cases}
\end{multline*}
In particular,
\begin{align*}
\#\text{GGT}\left(\mathbf{k};\{p_{i,j}\},\{q_{i,j}\}\right) &= \text{sgn}(\mathbf{k}) \cdot \text{sgn}(\{p_{i,j}\},\{q_{i,j}\}) \cdot \#\text{GT}(\mathbf{k}^\text{inc}) \\ &= \text{sgn}(\{p_{i,j}\},\{q_{i,j}\}) \cdot \prod_{1 \leq i<j \leq n} \frac{k_j-k_i}{j-i}.
\end{align*}
\end{proposition}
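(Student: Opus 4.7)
The proof proceeds by induction on $n$, closely following the computational proof of Proposition \ref{prop::GT_det}; the base case $n = 1$ is immediate. For the inductive step, unfolding one level of the recursive definition gives
\[
	\#\text{GGT}(\mathbf{k};\{p_{i,j}\},\{q_{i,j}\})\mid_{\eta_\text{row}=A} = \sum_{\mathbf{l}} \varepsilon(\mathbf{l}) \cdot \#\text{GGT}(\mathbf{l};\{p_{i,j}\},\{q_{i,j}\})\mid_{\eta_\text{row}=A'},
\]
where $A=(A',A_n)$, $\mathbf{l}$ ranges over the support of $\prod_j \sint{k_{p_{n-1,j}}}{k_{q_{n-1,j}}}$ with $\{\!\{\mathbf{l}\}\!\}=A_{n-1}$, and $\varepsilon(\mathbf{l}) \in \{\pm 1\}$ is the sign of $\mathbf{l}$ in that signed Cartesian product. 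Applying the inductive hypothesis to each summand reduces the problem to computing $\Sigma := \sum_{\mathbf{l}} \varepsilon(\mathbf{l}) \cdot \text{sgn}(\mathbf{l})$ under the assumption that $A'$ forms a classical Gelfand-Tsetlin pattern; the prefactor $\text{sgn}(\{p_{i,j}\}_{i\le n-2},\{q_{i,j}\}_{i\le n-2})$ then comes along for the ride (and if it already vanishes because some $G_i$ with $i \le n-2$ is not a tree, the claim holds trivially).

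Exactly as in Proposition \ref{prop::GT_det}, if $A_{n-1}$ has a repeated entry then $A'$ is not GT and the whole expression vanishes; otherwise let $a_1 < \cdots < a_{n-1}$ be its entries, set $t_i$ by $a_{t_i - 1} < k_i \le a_{t_i}$ (with $a_0 = -\infty$, $a_n = +\infty$), and $f_i = \sum_{\alpha < i} e_\alpha \in \mathbb{R}^{n-1}$. Parameterising $\mathbf{l}$ by $\phi \in \mathfrak{S}_{n-1}$ via $l_j = a_{\phi(j)}$, so that $\text{sgn}(\mathbf{l}) = \text{sgn}(\phi)$ and $\varepsilon(\mathbf{l}) = \prod_j [u_j]_{\phi(j)}$ with $u_j := f_{t_{q_{n-1,j}}} - f_{t_{p_{n-1,j}}}$, the Leibniz formula identifies
\[
	\Sigma = \det\bigl( u_1, u_2, \ldots, u_{n-1} \bigr).
\]

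To evaluate this determinant I case-split on $G_{n-1}$. If $G_{n-1}$ contains a cycle, then signing the edges around the cycle produces a linear relation among the $u_j$, so $\det = 0$; and $\text{sgn}(\{p_{i,j}\},\{q_{i,j}\})=0$ by definition, matching the claim. If $G_{n-1}$ is a tree but two $t_i$'s coincide, the unique path between the corresponding vertices yields a telescoping linear relation among the columns, so again $\det = 0$; this is precisely the case in which $A$ fails to be classical GT. When $G_{n-1}$ is a tree and $t_1,\ldots,t_n$ are distinct (equivalently, $A$ is classical GT), root the tree at $r_{n-1}(0)$ and negate each $u_j$ whose deeper endpoint is $p_{n-1,j}$, contributing an overall factor $(-1)^{\#\{j : p_{n-1,j}=r_{n-1}(j)\}}$. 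Column operations processing vertices outward from the root then replace each resulting column $f_{t_{r_{n-1}(j)}} - f_{t_{\mathrm{parent}(r_{n-1}(j))}}$ by $f_{t_{r_{n-1}(j)}} - f_{t_{r_{n-1}(0)}}$; differencing successive columns and invoking the antisymmetry computation at the end of the proof of Proposition \ref{prop::GT_det} identifies the remaining determinant with $\text{sgn}(r_{n-1}) \cdot \text{sgn}(\mathbf{k})$. Assembling the signs gives $\Sigma = \text{sgn}(\mathbf{k}) \cdot (-1)^{\#\{j:p_{n-1,j}=r_{n-1}(j)\}} \cdot \text{sgn}(r_{n-1})$, which multiplied by the inductive prefactor yields the claim.

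The main obstacle is the bookkeeping in the tree-rooting step: verifying that the composite sign obtained from flipping columns, performing column operations, and reordering by $r_{n-1}$ really matches the recipe in Definition \ref{def::sgn_param}, and in particular confirming root-independence (asserted without proof there but needed implicitly, since $\Sigma$ is manifestly independent of the choice of $r_{n-1}(0)$ while the pair of signs $(-1)^{\#\{j:p_{n-1,j}=r_{n-1}(j)\}}$ and $\text{sgn}(r_{n-1})$ is not individually).
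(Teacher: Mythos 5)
Your proof is correct and follows essentially the same route as the paper's: induction on $n$, identification of the signed sum over $\mathbf{l}$ with $\det\bigl(f_{t_{q_{n-1,j}}}-f_{t_{p_{n-1,j}}}\bigr)_j$, a case split on whether $G_{n-1}$ is a tree, and evaluation by rooting the tree, normalizing column signs, and invoking the antisymmetry computation from Proposition \ref{prop::GT_det}. If anything, your version is slightly more careful at the column-reduction step (retaining the $-f_{t_{r_{n-1}(0)}}$ terms, which the paper's displayed intermediate expression silently drops), and your closing observation that root-independence of the composite sign follows from the manifest root-independence of $\Sigma$ is a fair point, though the paper defers that well-definedness to \cite{Fis} rather than relying on it circularly.
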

\begin{proof}
When all $G_i$ are trees, the proof runs parallel to the proof of Claim \ref{prop::GT_det}. Therefore, it is sufficient to show that for $n \geq 2$
\begin{multline} \label{prop::GGT::eq}
	\sum_{\mathbf{l} \in \sint{k_{p_{n-1,1}}}{k_{q_{n-1,1}}} \times \sint{k_{p_{n-1,2}}}{k_{q_{n-1,2}}} \times \cdots \times \sint{k_{p_{n-1,n-1}}}{k_{q_{n-1,n-1}}} \text{ s.t. } \{\!\{ \mathbf{l} \}\!\} = A_{n-1}} \text{sgn}(\mathbf{l})
	\\ =\begin{cases}
	\text{sgn}(\mathbf{k}) \cdot \frac{\text{sgn}(\{p_{i,j}\}_{1 \leq j \leq i \leq n-1},\{q_{i,j}\}_{1 \leq j \leq i \leq n-1})}{\text{sgn}(\{p_{i,j}\}_{1 \leq j \leq i \leq n-2},\{q_{i,j}\}_{1 \leq j \leq i \leq n-2})} & \text{if } t_1,t_2,\ldots,t_n \text{ are distinct}, \\
	0 & \text{otherwise},
	\end{cases}
\end{multline}
instead of 
\[
	\sum_{\mathbf{l} \in \sint{k_1}{k_2} \times \sint{k_2}{k_3} \times \cdots \times \sint{k_{n-1}}{k_n} \text{ s.t. } \{\!\{ \mathbf{l} \}\!\} = A_{n-1}} \text{sgn}(\mathbf{l})
	=\begin{cases}
	\text{sgn}(\mathbf{k}) & \text{if } t_1,t_2,\ldots,t_n \text{ are distinct}, \\
	0 & \text{otherwise}.
	\end{cases}
\]
(For the definition of the invariants, see the proof of Claim \ref{prop::GT_det}.)
Here, the LHS is equal to 
\begin{equation} \label{prop::GGT::det-formula}
	\text{det}\left(f_{t_{q_{n-1,1}}}-f_{t_{p_{n-1,1}}},f_{t_{q_{n-1,2}}}-f_{t_{p_{n-1,2}}},\ldots,f_{t_{q_{n-1,n-1}}}-f_{t_{p_{n-1,n-1}}}\right).
\end{equation}
Here, each row of the matrix in (\ref{prop::GGT::det-formula}) is of the form $\pm(f_{\text{par}(v)} - f_v)$, where $\text{par}(v)$ is the parent of the vertex $v$ in G with root $0$. Furthermore, each $v \in \{1,2,\ldots,n-1\}$ appears exactly once.
Therefore, when we consider treating vertices of $G$ in postorder (with root $0$), the determinant (\ref{prop::GGT::det-formula}) is equal to
\begin{align*}
	&(-1)^{\#\{j \mid 1 \leq j \leq n-1,\, p_{n-1,j} = r(j)\}} \cdot \text{det}\left( f_{t_{r(1)}},f_{t_{r(2)}},\ldots,f_{t_{r(n-1)}}\right) \\
	&\qquad= (-1)^{\#\{i \mid 1 \leq j \leq n-1,\, p_{n-1,j} = r(j)\}} \cdot \text{sgn}(\mathbf{T}) \cdot \text{sgn}(r).
\end{align*}

Note that we defined $\text{sgn}(\mathbf{T})=0$ when $\mathbf{T}$ has duplicates.
It immediately implies $(\ref{prop::GGT::eq})$ by the definition of the sign of parameters.

When some $G_i$ is not a tree, let $i$ be the minimum such index.
When $i < n-1$, by the induction, the statement becomes trivial because it is holds that $\#\text{GGT}(\mathbf{l};\{p_{i,j}\}_{1 \leq j \leq i \leq n-2},\{q_{i,j}\}_{1 \leq j \leq i \leq n-2})=0$ independent of $\mathbf{l}$. Otherwise, it is sufficient to show that the determinant (\ref{prop::GGT::det-formula})
is equal to $0$.
Now, $G_{n-1}$ has $n$ vertices and $n-1$ edges but it is not a tree. Therefore, $G_{n-1}$ has at least one cycle. (Here, we forget the direction of the edges.)
When the edges $j_1,j_2,\ldots,j_\ell$ form a cycle, then $f_{t_{q_{n-1,j_1}}}-f_{t_{p_{n-1,j_1}}},f_{t_{q_{n-1,j_2}}}-f_{t_{p_{n-1,j_2}}},\ldots,f_{t_{q_{n-1,j_\ell}}}-f_{t_{p_{n-1,j_\ell}}}$ are not linearly independent, which implies that the determinant is $0$.
\end{proof}
This proposition is an answer to Fischer's question relevant to the notion of \textit{Gelfand-Tsetlin Graph sequences} in \cite{Fis}:
\begin{quote}
\textit{
``
we want to mention an obvious generalization of Gelfand-Tsetlin tree sequences, which we do not consider in this article, but might be interesting to look at: the notion of admissibility makes perfect sense if the tree T is replaced by any other graph. Are there any nice assertions to be made on ``Gelfand-Tsetlin graph sequences''?
''
}
\end{quote}
We can also interpret the proposition via the notions of signed sets, sijections and compatibility. This interpretation results in the following corollary.
\begin{corollary}
Let $n$, $\mathbf{k}$, $\{p_{i,j}\}$, $\{q_{i,j}\}$ and $A$ be the parameters defined in the above proposition.
Then, there exists a sijection between $\text{GGT}\left(\mathbf{k};\{p_{i,j}\},\{q_{i,j}\}\right) \mid_{\eta_\text{row}=A}$ and the signed set $S$ defined by
\[
	S=\begin{cases} (\emptyset,\emptyset) & \text{when sgn}(\{p_{i,j}\}, \{q_{i,j}\})=0, \\
	\text{GT}(\mathbf{k}) \mid_{\eta_\text{row}=A} & \text{when sgn}(\{p_{i,j}\}, \{q_{i,j}\})=+1, \\
	-\text{GT}(\mathbf{k}) \mid_{\eta_\text{row}=A} & \text{when sgn}(\{p_{i,j}\}, \{q_{i,j}\})=-1, \end{cases}
\]
which is compatible with $\eta_\text{row}$.
\end{corollary}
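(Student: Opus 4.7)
The plan is that this corollary is essentially a sijective repackaging of Proposition \ref{prop::GGT}, so the proof proceeds in three short steps: match the signed cardinalities of both sides case-by-case, invoke the general principle that any two signed sets of equal cardinality admit a sijection, and observe that compatibility with $\eta_{\text{row}}$ is automatic in the present setup.

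First I would compare sizes. Proposition \ref{prop::GGT} gives
\[
\#\text{GGT}(\mathbf{k};\{p_{i,j}\},\{q_{i,j}\})\mid_{\eta_{\text{row}}=A} \;=\; \text{sgn}(\mathbf{k}) \cdot \text{sgn}(\{p_{i,j}\},\{q_{i,j}\})
\]
when $A$ forms a classical Gelfand--Tsetlin pattern and $0$ otherwise, while Proposition \ref{prop::GT_det} gives $\#\text{GT}(\mathbf{k})\mid_{\eta_{\text{row}}=A} = \text{sgn}(\mathbf{k})$ in the classical case and $0$ otherwise. Splitting on the value of $\text{sgn}(\{p_{i,j}\},\{q_{i,j}\}) \in \{0,+1,-1\}$ and comparing with the three prescribed definitions of $S$, one checks directly that $\#\text{GGT}\mid_{\eta_{\text{row}}=A} = \#S$ on the nose: the sign $-1$ case is handled by the opposite $-\text{GT}(\mathbf{k})\mid_{\eta_{\text{row}}=A}$, which has the required cardinality $-\text{sgn}(\mathbf{k})$.

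Once cardinalities agree, a sijection exists abstractly: for signed sets $U$ and $V$ with $\#U = \#V$ one has $|U^+| + |V^-| = |U^-| + |V^+|$, so any bijection between these two finite sets, extended by its own inverse, is an involution on $\bs{U} \sqcup \bs{V}$ of the required sign pattern, i.e.\ a sijection $U \sijto V$. The compatibility condition with $\eta_{\text{row}}$ is then automatic, since every element of $\bs{\text{GGT}\mid_{\eta_{\text{row}}=A}}$ carries $\eta_{\text{row}}$-value $A$ by definition of the restriction, and the same holds for every element of $\bs{S}$ (vacuously when $S=(\emptyset,\emptyset)$, by construction in the two remaining cases). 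Hence any such sijection preserves $\eta_{\text{row}}$.

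The main ``obstacle'' here is conceptual rather than technical: the genuine combinatorial content sits entirely in Proposition \ref{prop::GGT}, and the corollary just recasts it in the language of signed sets and compatibility. A strictly stronger and more informative statement would be an explicit canonical construction --- in the spirit of $\pi$, $\sigma$, $\tau$ for classical Gelfand--Tsetlin patterns --- placing the family $\{\text{GGT}(\mathbf{k};\{p_{i,j}\},\{q_{i,j}\})\}$ together with $\text{GT}(\mathbf{k})$ (up to sign) into a partially integrable system compatible with $\eta_{\text{row}}$; but the corollary as stated only asserts existence, which is already covered by the cardinality match above.
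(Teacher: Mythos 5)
Your proposal is correct and matches the paper's (essentially implicit) argument: the paper derives this corollary directly from Proposition \ref{prop::GGT} by the same signed-cardinality comparison, with compatibility being automatic because every element of both restricted signed sets carries $\eta_{\text{row}}$-value $A$, and the paper likewise remarks afterwards that only existence is asserted and an explicit construction would require more work.
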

This corollary states only the existence of such sijections.
It should be possible to construct them explicitly by generalizing the constructions in this section with the combinatorial operations illustrated in \cite{Fis}.

\subsection{Preparation for Section \ref{secofSGT}}
For use in Section \ref{secofSGT} we check the compatibilities of the sijections constructed in the present section.
First, we define statistics $\eta_{\text{row},i}$ as the $i$-th component of $\eta_\text{row}$ on Gelfand-Tsetlin patterns with the length of the bottom row $\geq i$. 
Moreover, we define a $\mathbb{Z}$-valued statistic $\eta_\text{top}$ of Gelfand-Tsetlin patterns as the value of the unique element of $\eta_{\text{row},1}$.

For example,
\begin{align*}
	\eta_{\text{row},1}\left( \begin{array}{ccccc}&&4&&\\&2&&5&\\2&&5&&7\end{array} \right) = \{\!\{4\}\!\},&&
	\eta_{\text{row},3}\left( \begin{array}{ccccc}&&4&&\\&2&&5&\\2&&5&&7\end{array} \right) = \{\!\{2,5,7\}\!\},
\end{align*}
and
\[
	\eta_\text{top}\left( \begin{array}{ccccc}&&4&&\\&2&&5&\\2&&5&&7\end{array} \right) = 4.
\]
In addition, we define $\eta_\text{top}$ for a signed set which is in the form of $\bigsqcup_{s \in S} \text{GT}(\mathbf{k}_s)$ by
\[
	\text{for } (T,s) \in \bigsqcup_{s \in S} \text{GT}(\mathbf{k}_s),\,\, \eta_\text{top}((T,s)) = \eta_\text{top}(T).
\]
For $\eta_{\text{row},i}$, we define these values on such signed sets similarly.
Note that these definitions are compatible with that for Gelfand-Tsetlin patterns and the relation
$
	\textrm{GT}(\mathbf{k})=\bigsqcup_{\mathbf{l} \in \underline{[k_1,k_2)}\times\cdots\times\underline{[k_{n-1},k_n)}} \textrm{GT}(\mathbf{l})
$.
\begin{proposition}\label{prop::rho}
The sijection $\rho$ in Construction \ref{rho} is compatible with $\eta_\text{top}$, $\eta_{\text{row},1}$, $\eta_{\text{row},2}$, $\ldots$, $\eta_{\text{row},n}$.
\end{proposition}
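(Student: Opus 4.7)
The plan is to trace through the three stages of Construction \ref{rho} and observe that $\rho$ acts on an element $(T,\mathbf{l})$ of the domain by leaving the underlying Gelfand-Tsetlin pattern $T$ completely unchanged, altering only the bottom row index $\mathbf{l}$ via $\beta$. Once this is made precise, preservation of all the claimed statistics will follow almost automatically.

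The key input is the fact, recorded in Subconstruction \ref{beta}, that $\beta=\beta_{\mathbf{a},\mathbf{b},x}$ is compatible with the normal statistic. This means that whenever $\beta$ pairs $\mathbf{l}\in\sint{a_1}{b_1}\times\cdots\times\sint{a_n}{b_n}$ with $(\mathbf{l}',\mathbf{m})\in \bigsqcup_{\mathbf{m}} \sint{m_1}{m_2}\times\cdots\times\sint{m_n}{x}$, the tuples $\mathbf{l}$ and $\mathbf{l}'$ coincide as integer $n$-tuples. In particular $\text{GT}(\mathbf{l})=\text{GT}(\mathbf{l}')$ as signed sets, so the identity sijection $\text{id}_{\text{GT}(\mathbf{l})}\colon \text{GT}(\mathbf{l})\sijto\text{GT}(\mathbf{l}')$ that is plugged into the disjoint union with signed index is well-defined. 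Unwinding Definition \ref{def_sij_DIS} and the rewriting of Example \ref{du_nest}, one checks that $\rho$ sends $(T,\mathbf{l})$ to $((T,\mathbf{l}'),\mathbf{m})$, regarded as an element of $\bigsqcup_{\mathbf{m}}\text{GT}(\mathbf{m},x)$ via $\text{GT}(\mathbf{m},x)=\bigsqcup_{\mathbf{l}'}\text{GT}(\mathbf{l}')$. The symmetric formula holds for elements originally on the codomain side, since $\rho$ is involutive.

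From this explicit description, compatibility is easy to verify. For each $i\in\{1,\ldots,n-1\}$, row $i$ of $T$ is literally the same sequence as row $i$ of the enlarged pattern $(T,\mathbf{l}')\in\text{GT}(\mathbf{m},x)$, because appending $(\mathbf{m},x)$ at the bottom does not modify any higher row. This yields compatibility with $\eta_{\text{row},i}$ for such $i$, and in particular with $\eta_\text{top}$. For $i=n$, the $n$-th row of $T$ is $\mathbf{l}$, while the $n$-th row of $(T,\mathbf{l}')$ is the second-from-bottom row $\mathbf{l}'$; since $\mathbf{l}$ and $\mathbf{l}'$ agree as integer tuples, they certainly agree as multisets, so $\eta_{\text{row},n}$ is preserved as well.

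The main obstacle is essentially bookkeeping: one must carefully match the notion of ``$n$-th row'' across the various indexed disjoint unions that appear inside the construction, and verify that the extension of $\eta_\text{top}$ and $\eta_{\text{row},i}$ to signed sets of the shape $\bigsqcup_s \text{GT}(\mathbf{k}_s)$ given in this subsection is compatible with the recursive identification $\text{GT}(\mathbf{k})=\bigsqcup_{\mathbf{l}}\text{GT}(\mathbf{l})$. Once this is confirmed, the verification reduces to the two observations above.
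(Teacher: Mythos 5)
Your proof is correct and follows the same route as the paper: the paper's (very brief) argument is exactly that $\rho$ is built as a disjoint union of identity sijections $\text{id}_{\text{GT}(\mathbf{l})}$, so the top $n$ rows are untouched and all the listed statistics are preserved. Your additional bookkeeping — using the normality of $\beta$ to see that $\mathbf{l}=\mathbf{l}'$ and tracking how the row indices match up after appending $(\mathbf{m},x)$ — just makes explicit what the paper leaves implicit.
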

\begin{proof}
The sijection is constructed as a disjoint union of $\text{id}_{\text{GT}(\mathbf{l})}$.
Therefore, the top $n$ rows are preserved under $\rho$. In particular, $\rho$ is compatible with the statistics.
\end{proof}
\begin{proposition}\label{prop::pi}
The sijections $\pi$, $\sigma$ in Construction \ref{pi} are compatible with $\eta_\text{top}$, $\eta_{\text{row},1}$, $\eta_{\text{row},2}$, $\ldots$, $\eta_{\text{row},n}$.
\end{proposition}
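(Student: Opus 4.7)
The plan is to deduce Proposition \ref{prop::pi} directly from Theorem \ref{thm::GT}, which already establishes that $\pi$ and $\sigma$ are compatible with $\eta_\text{row}$. The key observation is that each of the statistics $\eta_\text{top}, \eta_{\text{row},1}, \ldots, \eta_{\text{row},n}$ is expressible as a function of $\eta_\text{row}$: by the definitions given at the start of this subsection, $\eta_{\text{row},i}(T) = (\eta_\text{row}(T))_i$ is the $i$-th coordinate of $\eta_\text{row}(T) \in \mathcal{A}_\text{row}^n$, and $\eta_\text{top}(T) = ((\eta_\text{row}(T))_1)_1$ is the unique entry of the top row. Since compatibility with a statistic $\eta$ obviously implies compatibility with any statistic of the form $f \circ \eta$, the compatibility of $\pi$ and $\sigma$ with $\eta_\text{row}$ immediately yields their compatibility with each of the coarser statistics listed in the proposition.

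The only point that requires a word of care is that $\sigma$ is defined on signed sets of the form $\bigsqcup_{\mathbf{l}} \text{GT}(\mathbf{l})$ rather than on a single $\text{GT}(\mathbf{k})$, so I would first invoke the extensions of $\eta_{\text{row},i}$ and $\eta_\text{top}$ to such disjoint unions, defined just before the proposition by $(T,s) \mapsto \eta_\bullet(T)$. These extensions are manifestly functions of the extension of $\eta_\text{row}$ in exactly the same way, so the projection argument carries over verbatim. The argument is therefore essentially a bookkeeping step building on Theorem \ref{thm::GT}, and I do not anticipate any genuine obstacle; the whole content is that passing to a coarser statistic never destroys compatibility.
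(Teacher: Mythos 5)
Your proposal is correct and matches the paper's proof, which simply cites Theorem \ref{thm::GT}; your write-up just makes explicit the (true and routine) fact that compatibility with $\eta_\text{row}$ implies compatibility with any statistic of the form $f \circ \eta_\text{row}$, which is exactly what the paper's one-line argument relies on.
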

\begin{proof}
It follows directly from Theorem \ref{thm::GT}.
\end{proof}
\begin{proposition}\label{prop::tau}
The sijection $\tau$ in Construction \ref{tau} is compatible with $\eta_{\text{row},1}$, $\eta_{\text{row},2}$, $\ldots$, $\eta_{\text{row},n-1}$.
In particular, it is compatible with $\eta_\text{top}$ when $n \geq 2$.
\end{proposition}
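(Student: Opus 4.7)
The plan is to walk through Construction \ref{tau} and verify compatibility at each intermediate step, then invoke Lemma \ref{lem:comp_comp} to assemble the conclusion. Recall that $\tau$ is built from three pieces: the tautological decomposition $\text{GT}(\mathbf{k}) = \bigsqcup_{\mathbf{l}} \text{GT}(\mathbf{l})$; a disjoint union with signed index of the identities $\text{id}_{\text{GT}(\mathbf{l})}$ along the sijection $\gamma$ from Subconstruction \ref{gamma}; and the sijection $\sigma$ from Construction \ref{pi} used to annihilate the ``garbage'' summands indexed by tuples containing a factor $\sint{k_{i+1}}{x} \times \sint{k_{i+1}}{x}$.

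The tautological first piece preserves all data and hence all row statistics. For the middle piece, the crucial observation is that $\gamma$ is compatible with the normal statistic on $\mathbb{Z}^{n-1}$, so whenever $\gamma$ pairs two indices they agree as elements of $\mathbb{Z}^{n-1}$. Feeding these pairings through Definition \ref{def_sij_DIS} preserves both the inner pattern $T \in \text{GT}(\mathbf{l})$ (because we use the identity on it) and the label $\mathbf{l}$ itself. Since $\mathbf{l}$ is precisely the $(n-1)$-th row of the ambient Gelfand–Tsetlin pattern, the top $n-1$ rows are untouched, so $\eta_{\text{row},i}$ is preserved for $i \le n-1$. For the third piece, $\sigma$ acts on summands of the form $\bigsqcup_{\mathbf{l}} \text{GT}(\mathbf{l})$ in which $\mathbf{l}$ has length $n-1$; invoking Proposition \ref{prop::pi} with $n$ replaced by $n-1$ shows that $\sigma$ is compatible with $\eta_{\text{row},1}, \ldots, \eta_{\text{row},n-1}$ in this setting as well.

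Assembling the three pieces via Lemma \ref{lem:comp_comp} together with the standard compatibility of disjoint unions and of the disjoint union with signed index (the observation following Definition \ref{def_sij_DIS}) yields that $\tau$ is compatible with $\eta_{\text{row},i}$ for every $i \in \{1, \ldots, n-1\}$. The final assertion for $\eta_\text{top}$ follows immediately from $\eta_\text{top} = (\eta_{\text{row},1})_1$: compatibility with $\eta_{\text{row},1}$ entails compatibility with any of its coordinates, and the hypothesis $n \ge 2$ is exactly what guarantees that the top row is not the bottom row being modified. There is no genuine obstacle in this argument, only careful bookkeeping of where each statistic lives and how it is tracked through $\gamma$ (via normal-statistic compatibility at the index level) and through $\sigma$ (via Proposition \ref{prop::pi} one dimension lower).
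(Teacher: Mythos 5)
Your proof is correct and follows essentially the same route as the paper's: decompose $\tau$ into the disjoint union of $\text{id}_{\text{GT}(\mathbf{l})}$ along $\gamma$ (which preserves the top $n-1$ rows because $\gamma$ is compatible with the normal statistic on the indices) and the sijection $\sigma$ (compatible by Proposition \ref{prop::pi}), then compose via Lemma \ref{lem:comp_comp}. The paper's proof is just a terser statement of exactly this argument.
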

\begin{proof}
As in the proof of Proposition \ref{prop::rho}, a disjoint union of $\text{id}_{\text{GT}(\mathbf{l})}$ is compatible with the statistics.
The sijection $\sigma$ is also compatible, so the sijection $\tau$, which is constructed by composing them, is also compatible.
\end{proof}

\section{
New sijections pertaining to monotone triangles}\label{secofSGT}
In this section, we simplify the sijection between alternating sign matrices and shifted Gelfand-Tsetlin patterns that is constructed in \cite{FK1},
and using the notion of compatibility we argue that it is more natural than the conventional one.
First, in \ref{ssec_def} we give definitions of some combinatorial objects. After that we define \textit{generalized inversion numbers} for these combinatorial objects in \ref{ssec_GIN}.
In \ref{ASM-SGT::KR} and \ref{ASM-SGT::MR} we construct a sijection and prove the compatibility of our construction with two statistics, one of which is the generalized inversion numbers we defined.

\subsection{Definitions}\label{ssec_def}
\subsubsection{Monotone Triangles}\label{def_MT}
A monotone triangle is a Gelfand-Tsetlin pattern such that for any $1 \leq j < i \leq n-1$, $a_{i,j} < a_{i,j+1}$ holds.
Note that we do not impose the condition on the bottom row.
We define a modified monotone triangle as a modified Gelfand-Tsetlin pattern such that for any $1 \leq j < i \leq n-1$, $a_{i,j} < a_{i,j+1}-1$ holds.
Here, the ``$-1$'' in the RHS is needed for consistency with our notation using half-open intervals.

There is a well-known bijection between alternating sign matrices and monotone triangles. (For example, see \cite{FK1}.)
Here, we illustrate a bijection between alternating sign matrices and modified monotone triangles without bothering with the details of the proof.
First, for an alternating sign matrix, we calculate the cumulative sum along columuns from above.
Next, row by row, we pick up the indices of columns where the element is $1$.
Last, we add to each element the number of elements on the left of it, to use half-open intervals
(cf. the bijection between Gelfand-Tsetlin patterns and modified Gelfand-Tsetlin patterns).
\begin{multline*}
	\hspace{2em}
	\underset{\text{alternating sign matrices}}{\begin{pmatrix} 0&0&1&0 \\ 1&0&-1&1 \\ 0&0&1&0 \\ 0&1&0&0 \end{pmatrix}}
	\hspace{2em} \to \hspace{2em}
	\underset{\text{(cumulative sum)}}{\begin{pmatrix} 0&0&1&0 \\ 1&0&0&1 \\ 1&0&1&1 \\ 1&1&1&1 \end{pmatrix}}
	\\ \to \hspace{2em}
	\underset{\text{monotone triangles}}{\begin{array}{ccccccc} &&&3&&& \\ &&1&&4&& \\ &1&&3&&4& \\ 1&&2&&3&&4 \end{array}}
	\hspace{2em} \to \hspace{2em}
	\underset{\text{modified monotone triangles}}{\begin{array}{ccccccc} &&&3&&& \\ &&1&&5&& \\ &1&&4&&6& \\ 1&&3&&5&&7 \end{array}}
	\hspace{2em}
\end{multline*}
In general, there is a one-to-one correspondence between alternating sign matrices with rank $n$ and monotone triangles with bottom row $(1,2,\ldots,n)$
and therefore between them and modified monotone triangles with bottom row $(1,3,\ldots,2n-1)$.

Last, we define the top of a modified monotone triangle $A=(a_{i,j})_{ij}$ as
$
	\eta_\text{top}(A) = a_{1,1}.
$
This statistic corresponds to Behrend, Di Francesco and Zinn-Justin's first statistic for alternating sign matrices, the column number of the $1$ in the first row.

\subsubsection{Generalized Monotone Triangles}\label{GMT}
We denote the signed set of modified monotone triangles with bottom row $\mathbf{k}$, by $\textrm{MT}(\mathbf{k})$.
Namely, we define
\begin{align*}
	\text{MT}(\mathbf{k})^+ = \{ \text{modified monotone triangles with bottom row } \mathbf{k} \}, && \text{MT}(\mathbf{k})^- = \emptyset,
\end{align*}
for a strictly increasing sequence $\mathbf{k} \in \mathbf{Z}^n$. For the case that $\mathbf{k}$ is not strictly increasing, see Remark \ref{uniqMT}.

Before explaining our definition of a generalized monotone triangle, some preparation is needed.
An \textit{arrow row} is a sequence of $\{\nwarrow,\nearrow,\nxarrow\}$, and as an element of a signed set its sign is $(-1)^{\# of \expnxarrow}$.
More formally, the signed set of arrow rows with length $n$ is $\mathrm{AR}_n=(\{\nwarrow,\nearrow\},\{\nxarrow\})^n$.
An arrow row $\mu$ with length $n$ acts on $\mathbf{k} \in \mathbb{Z}^n$ as follows:
\[
	\mu(\mathbf{k})=\sint{k_1+\delta_{\nearrow}(\mu_1)}{k_2-\delta_{\nwarrow}(\mu_2)}\times\cdots\times\sint{k_{n-1}+\delta_{\nearrow}(\mu_{n-1})}{k_n-\delta_{\nwarrow}(\mu_n)}
\]
where $\delta_{\nwarrow}(\nwarrow)=\delta_{\nwarrow}(\nxarrow)=1$, $\delta_{\nwarrow}(\nearrow)=0$ and $\delta_{\nearrow}(\nearrow)=\delta_{\nearrow}(\nxarrow)=1$, $\delta_{\nearrow}(\nwarrow)=0$.
Fischer and Konvalinka prove some recursive relation of monotone triangles (with their generalized definition of (modified) monotone triangles \cite[Problem 7]{FK1}.
This result can be translated to the following recursive relation:
\[
	\exists\, \Xi_\mathbf{k}^{(\textrm{FK})} \colon \textrm{MT}(\mathbf{k}) \sijto \bigsqcup_{\mu \in \textrm{AR}_n} \bigsqcup_{\mathbf{l} \in \mu(\mathbf{k})} \textrm{MT}(\mathbf{l}).
\]
We shall adopt the RHS of this relation as a definition of generalized monotone triangles.
Namely, our variant of the definition is described as follows.
\begin{definition}
For $\mathbf{k} \in \mathbb{Z}^n$, a generalized monotone triangle with bottom row $\mathbf{k}$ is a sequence
\[\left(\mu^{(n)},\mathbf{k}^{(n-1)},\mu^{(n-1)},\ldots,\mathbf{k}^{(1)},\mu^{(1)}\right) \in \mathbb{Z}^n \times \mathrm{AR}_n \times \mathbb{Z}^{n-1} \times \mathrm{AR}_{n-1} \times \cdots \times \mathbb{Z}^1 \times \mathrm{AR}_1,\]
such that for $1 \leq i \leq n-1$, $\mathbf{k}^{(i)} \in \mu^{(i+1)}(\mathbf{k}^{(i+1)})$. More formally, the signed set of generalized monotone triangles with bottom row $\mathbf{k}$ is 
\[
	\textrm{GMT}(\mathbf{k}) := \bigsqcup_{\mu^{(n)} \in \textrm{AR}_n} \bigsqcup_{\mathbf{k}^{(n-1)} \in \mu^{(n)}(\mathbf{k})} \bigsqcup_{\mu^{(n-1)} \in \textrm{AR}_{n-1}} \bigsqcup_{\mathbf{k}^{(n-2)} \in \mu^{(n-1)}(\mathbf{k}^{(n-1)})}
	\cdots \bigsqcup_{\mu^{(2)} \in \textrm{AR}_{2}} \bigsqcup_{\mathbf{k}^{(1)} \in \mu^{(2)}(\mathbf{k}^{(2)})} \textrm{AR}_1.
\]
\end{definition}
Another interpretation of a generalized monotone triangle is given by the (extented) arrowed monotone triangles which are introduced in \cite{AF}.
An arrowed monotone triangle is a triangular array of integers such that each entry is decorated by $\nwarrow$, $\nearrow$ or $\nxarrow$ with some condition.
The condition used in our variant is described as follows:
\begin{quote}
	For each entry $a$ that is not in bottom row, let $b$, $c$ be the left-below and the right-below entry and let $\mu_b$, $\mu_c$ be the decoration of $b$, $c$.
	Then, $a \in \sint{b+\delta_{\nearrow}(\mu_b)}{c-\delta_{\nwarrow}(\mu_c)}$ holds.
\end{quote}
For $\left(\mu^{(n)},\mathbf{k}^{(n-1)},\mu^{(n-1)},\ldots,\mathbf{k}^{(1)},\mu^{(1)}\right) \in \textrm{GMT}(\mathbf{k})$,
$\left(\mathbf{k}^{(1)},\mathbf{k}^{(2)},\ldots,\mathbf{k}^{(n-1)},\mathbf{k}\right)$ is an arrowed monotone triangle when it is decorated by $\left(\mu_1,\mu_2,\ldots,\mu_n\right)$.
In particular, $\left(\mathbf{k}^{(1)},\mathbf{k}^{(2)},\ldots,\mathbf{k}^{(n-1)},\mathbf{k}\right)$ is a monotone triangle with bottom row $\mathbf{k}$ when $\mathbf{k}$ is strictly increasing.
\begin{ex}
Let $\mathbf{k}=(1,3,5)$ and $\mu=(\nwarrow,\nxarrow,\nearrow)$.
Then, we have
\[
	\mu(\mathbf{k}) = \sint{1}{2} \times \sint{4}{5} = (\{(1,4)\},\emptyset).
\]
Next, let $\mathbf{k}^{(2)} = (1,4)$ and $\mu^{(2)} = (\nxarrow,\nxarrow)$, we have $\mu^{(2)}(\mathbf{k}^{(2)}) = \sint{2}{3}$. From the above, for example we have
\[
	((\nwarrow),(2),(\nxarrow,\nxarrow),(1,4),(\nwarrow,\nxarrow,\nearrow)) \insupp \text{GMT}((1,3,5)),
\]
and its sign is $-1$. With the interpretation as an arrowed monotone triangle, we sometimes describe it by
\[
	\left( \quad\begin{array}{ccccc}&&2&&\\&1&&4&\\1&&3&&5\end{array}\quad,
	\setlength{\arraycolsep}{0pt} 
	\quad\begin{array}{ccccc}&&\nwarrow&&\\&\nxarrow&&\nxarrow&\\\nwarrow&&\nxarrow&&\nearrow \end{array}\quad \right) \insupp \text{GMT}((1,3,5)).
\]
\end{ex}

To see that our generalized monotone triangle is indeed a generalization of a (modified) monotone triangle, we first construct a sijection
$
	\iota_{\textrm{MT}(\mathbf{k})} \colon \textrm{MT}(\mathbf{k}) \sijto \textrm{GMT}(\mathbf{k})
$
for a strictly increasing $\mathbf{k} \in \mathbb{Z}^n$.
\begin{construction}\label{iotamt}
\textit{
	We construct a sijection $\iota_{\textrm{MT}(\mathbf{k})}  \colon \textrm{MT}(\mathbf{k}) \sijto \textrm{GMT}(\mathbf{k})$ for a strictly increasing $\mathbf{k} \in \mathbb{Z}^n$.
}
For $n=1$, $\textrm{MT}(\mathbf{k})=(\{k_1\},\emptyset)$ and $\textrm{GMT}(\mathbf{k})=\textrm{AR}_1$. We define $\iota_{\textrm{MT}(\mathbf{k})}$ as
\[
\begin{tikzpicture}[main/.style = {draw, circle}]
\node (S) at (0,3) {$\text{MT}(\mathbf{k})$};
\node (T) at (3,3) {$\text{GMT}(\mathbf{k})$};
\node (phi) at (-1.9,3) {$\varphi \colon$};
\node (sij) at (1.5,3) {$\sijto$};
\node (+) at (-2,1.5) {$+$};
\node (-) at (-2,0) {$-$};
\draw (-2.5,-0.5) to (3.5,-0.5);
\draw (-2.5,0.5) to (3.5,0.5);
\draw (-2.5,2.5) to (3.5,2.5);
\node[scale=0.84,main] (1) at (0,2) {$k_1$};
\node[scale=0.84,main] (2) at (3,1) {$\nearrow$};
\node[scale=0.84,main] (3) at (3,0) {$\nxarrow$};
\node[scale=0.84,main] (4) at (3,2) {$\nwarrow$};
\draw[-] (1) -- (4);
\draw[-] (2) to [out=180,in=180] (3);
\end{tikzpicture}.
\]
For $n>1$, it is sufficient to construct a sijection $\Xi_\mathbf{k} \colon \textrm{MT}(\mathbf{k}) \sijto \bigsqcup_{\mu \in \textrm{AR}_n} \bigsqcup_{\mathbf{l} \in \mu(\mathbf{k})} \textrm{MT}(\mathbf{l})$.
Indeed, the construction is completed by induction on $n$:
\begin{equation} \label{iotamt::rec}
	\textrm{MT}(\mathbf{k})
	\sijto \bigsqcup_{\mu \in \textrm{AR}_n} \bigsqcup_{\mathbf{l} \in \mu(\mathbf{k})} \textrm{MT}(\mathbf{l})
	\sijto \bigsqcup_{\mu \in \textrm{AR}_n} \bigsqcup_{\mathbf{l} \in \mu(\mathbf{k})} \textrm{GMT}(\mathbf{l})
	= \textrm{GMT}(\mathbf{k}).
\end{equation}
Note that for any $\mu \in \mathrm{AR}_n$ and a strictly increasing $\mathbf{k} \in \mathbb{Z}^n$, every element of $\mu(\mathbf{k})$ is weakly increasing.
For an $\mathbf{l} \in \mu(\mathbf{k})$ that is not strictly increasing, it holds that $l_i = l_{i+1}$ for some $1 \leq i \leq n-1$, and thus $\text{MT}(\mathbf{l}) = (\emptyset,\emptyset)$ holds.
Therefore, we can obtain the result for general $n$ by the induction.

Let us now construct $\Xi_{\mathbf{k}}$.
First, we decompose $\textrm{MT}(\mathbf{k})$ with respect to the second bottom row. Let $\textrm{MT}(\mathbf{k},\mathbf{l})$ be a modified signed set of monotone triangles whose bottom row is $\mathbf{k}$ and whose second bottom row is $\mathbf{l}$,
then $$\mathrm{MT}(\mathbf{k})=\bigsqcup_{\mathbf{l} \in \mathbb{Z}^{n-1}} \mathrm{MT}(\mathbf{k},\mathbf{l}),$$
where $\mathbb{Z}^{n-1}$ is identified with the signed set $(\mathbb{Z}^{n-1},\emptyset)$, and the same identification shall be used in the following without mentioning.
Here, in case $\mathbf{l} \in \bigcup_{\mu \in \mathrm{AR}_n} \bs{\mu(\mathbf{k})}$ we have $\text{MT}(\mathbf{k},\mathbf{l})=\text{MT}(\mathbf{l})$,
and otherwise $\mathrm{MT}(\mathbf{k},\mathbf{l}) = (\emptyset,\emptyset)$.
Consequently, we have
\[
	\text{MT}(\mathbf{k})=\bigsqcup_{\mathbf{l} \in \bigcup_{\mu \in \mathrm{AR}_n} \bs{\mu(\mathbf{k})}} \mathrm{MT}(\mathbf{l}).
\]
Let $\mathscr{M}_\mathbf{l}$ be a signed set defined by $\mathscr{M}_\mathbf{l}^\pm=\{ \mu \in \mathrm{AR}_n^\pm \mid \mathbf{l} \in \bs{\mu(\mathbf{k})}\}$.
Then, we have
\[
	\bigsqcup_{\mu \in \textrm{AR}_n} \bigsqcup_{\mathbf{l} \in \mu(\mathbf{k})} \textrm{MT}(\mathbf{l})
	= \bigsqcup_{\mathbf{l} \in \mathbb{Z}^{n-1}} \bigsqcup_{\mu \in \mathscr{M}_\mathbf{l}} \text{MT}(\mathbf{l})
	= \bigsqcup_{\mathbf{l} \in \bigcup_{\mu \in \mathrm{AR}_n} \bs{\mu(\mathbf{k})}} \bigsqcup_{\mu \in \mathscr{M}_\mathbf{l}} \text{MT}(\mathbf{l}).
\]
Therefore, it is sufficient to construct a sijection
$(\{\cdot\},\emptyset) \sijto \mathscr{M}_\mathbf{l}$ for $\mathbf{l} \in \bigcup_{\mu \in \mathrm{AR}_n} \bs{\mu(\mathbf{k})}$.
Now, $\bs{\mathscr{M}_\mathbf{l}}$ is described as follows, when it is not empty:
\[
	\bs{\mathscr{M}_\mathbf{l}} = \left\{ \mu \in \mathrm{AR}_n \middle\vert \begin{array}{l} \mu_i=\nwarrow \text{when $i \leq n-1$ and $k_i=l_i$},\\ \mu_i=\nearrow \text{when $i \geq 2$ and $k_i=l_{i-1}+1$} \\\text{otherwise, $\mu_i \in \{ \nwarrow, \nearrow, \nxarrow\}$} \end{array}\right\}.
\]
Let $\mu_\mathbf{l}$ be an arrow row such that
\[
	(\mu_\mathbf{l})_i = \begin{cases} \nearrow & \text{when $i \geq 2$ and $k_i=l_{i-1}+1$}, \\ \nwarrow & \text{otherwise}, \end{cases}
\]
and the following rule then defines a sijection $(\{\cdot\},\emptyset) \sijto \mathscr{M}_\mathbf{l}$:
\begin{quote}
	An arrow row $\mu_\mathbf{l} \in \mathscr{M}_\mathbf{l}^+$ corresponds to a unique element of LHS of the sijection.
	For any other element $\mu$ of $\mathscr{M}_\mathbf{l}$, there exists a minimum index $i$ such that $\mu_i \ne (\mu_\mathbf{l})_i$.
	The counterpart of $\mu$ is given by adding/removing $\nwarrow$ to/from $\mu_i$. \qed
\end{quote}
\end{construction}

\begin{remark}
This construction is similar to Problem 7 in \cite{FK1}.
The most important difference with that reference is the definition of $(\mu_\mathbf{l})_n$, which is crucial from the viewpoint of compatibility.
For more details, see Proposition \ref{prop::GIN_iota}.
\end{remark}

Next, we define some statistics.
\begin{definition}\label{def:eta_TA}
Let $\mathbf{k} \in \mathbb{Z}^n$. Define $\eta_\text{TA}$ as the triangular array-valued statistic of $\text{GMT}(\mathbf{k})$ such that
\[
\forall s = \left(\mu^{(n)},\mathbf{k}^{(n-1)},\mu^{(n-1)},\ldots,\mathbf{k}^{(1)},\mu^{(1)}\right) \in \textrm{GMT}(\mathbf{k}),\,\, \eta_\text{TA}(s)=\left(\mathbf{k}^{(1)},\mathbf{k}^{(2)},\ldots,\mathbf{k}^{(n-1)},\mathbf{k}\right).
\]
In addition, for a triangular array $T$, define $\eta_\text{TA}(T) = T$.
\end{definition}

Last, the following proposition shows that the generalized monotone triangles are indeed a generalization of monotone triangles.
\begin{proposition}\label{eta_MT}
Let $\mathbf{k} \in \mathbb{Z}^n$ such that $\mathbf{k}$ is strictly increasing.
Then, $\iota_{\textrm{MT}(\mathbf{k})} \colon \textrm{MT}(\mathbf{k}) \sijto \textrm{GMT}(\mathbf{k})$ in Construction \ref{iotamt} is compatible with $\eta_\text{TA}$.
\end{proposition}
\begin{proof}
For $n=1$, this is trivial. For $n>1$, the second bottom row is preserved, because both sides are first decomposed by the second bottom row in the construction. Therefore, according to (\ref{iotamt::rec}), the proof is completed by induction on $n$.
\end{proof}

The top of a generalized monotone triangle $T$ is defined by $\eta_\text{top} = \eta_\text{top}(\eta_\text{TA}(T))$.
According to Proposition \ref{eta_MT}, the sijection $\iota_{\textrm{MT}(\mathbf{k})}$ in Construction \ref{iotamt} is compatible with $\eta_\text{top}$.

\begin{Remark} \label{uniqMT}
In \cite{FK1}, the definition of a monotone triangle with general bottom row is given as a triangular array.
Accordingly, one can extend the definition of a modified monotone triangle for general bottom row as follows.
First one defines a binary relation $ \mathbf{l} \prec \mathbf{k} $ for $\mathbf{l} \in \mathbb{Z}^{n-1}$ and $\mathbf{k} \in \mathbb{Z}^n$ as:
\[
\mathbf{l} \prec \mathbf{k} \Leftrightarrow 
\begin{cases}
\text{for $i \in \{1,2,\ldots,n-1\}$, $l_i$ is in the closed interval between $k_i$ and $k_{i+1}-1$,} \\
\text{if $k_{i-1} < k_i < k_{i+1}$ for some $i \in \{2,\ldots,n-1\}$, then $l_{i-1} \ne k_i-1$ or $l_i \ne k_i$,} \\
\text{if $k_i > l_i = k_{i+1}-1$ for some $i \in \{1,\ldots,n-1\}$, then $i \leq n-2$ and $l_{i+1}=k_{i+1}$,} \\
\text{if $k_i = l_i \geq k_{i+1}$ for some $i \in \{1,\ldots,n-1\}$, then $i \geq 2$ and $l_{i-1}=k_i-1$.}
\end{cases}
\]
Then, a modified monotone triangle with bottom row $\mathbf{k}$ is a triangular array $(k_1,k_2,\ldots,k_n) \in \mathbb{Z} \times \mathbb{Z}^2 \times \cdots \times \mathbb{Z}^n$ such that $k_1 \prec k_2 \prec \cdots \prec k_n=k$.
The sign of a modified monotone triangle $(k_1,k_2,\ldots,k_n)$ is $(-1)^r$, where $r$ is the sum of:
\begin{itemize}
\item the number of $(i,j)$ such that $1 \leq j < i \leq n$ and $k_{i,j} \geq k_{i,j+1}$,
\item the number of $(i,j)$ such that $1 \leq j+2 \leq i \leq n$, $k_{i,j} \geq k_{i,j+1} \geq k_{i,j+2}$, $k_{i-1,j} = k_{i,j+1}-1$ and $k_{i-1,j+1} = k_{i,j+1}$.
\end{itemize}
Let $\text{MT}^\text{(FK)}(\mathbf{k})$ be the signed set of modified monotone triangles with bottom row $\mathbf{k}$ according to this definition.
This definition might seem strange but it is in fact correct.
The purpose of this remark is to explain why, by showing uniqueness in a certain sense.
Let us extend the definition of modified monotone triangles independently of Fischer and Konvalinka's work, but rather according to Construction \ref{iotamt}.
First, we define $\mathscr{M}_{\mathbf{k},\mathbf{l}}$ as:
\begin{itemize}
\item $\bs{\mathscr{M}_{\mathbf{k},\mathbf{l}}} = \{ \mu \in \bs{\mathrm{AR}_n} \mid \mathbf{l} \in \bs{\mu(\mathbf{k})}\}$,
\item The sign of $\mu \in \bs{\mathscr{M}_{\mathbf{k},\mathbf{l}}}$ is the product of:
\begin{itemize}
\item the sign of $\mu$ as an element of $\text{AR}_n$,
\item the sign of $\mathbf{l}$ as an element of $\mu(\mathbf{k})$.
\end{itemize}
\end{itemize}
In addition, we define $\mathbf{l} \prec' \mathbf{k} \Leftrightarrow \#\mathscr{M}_{\mathbf{k},\mathbf{l}} \ne 0$ and define a modified monotone triangle as a triangular array $(k_1,k_2,\ldots,k_n) \in \mathbb{Z} \times \mathbb{Z}^2 \times \cdots \times \mathbb{Z}^n$ such that $k_1 \prec' k_2 \prec' \cdots \prec' k_n=k$.
Note that when $\mathbf{k}$ is strictly increasing, this definition coincides with the original one. Let $X(\mathbf{k})$ be a set of modified monotone triangles for $\mathbf{k} \in \mathbb{Z}^n$ according to this definition.
If $\#\mathscr{M}_{\mathbf{k},\mathbf{l}} = 0, \pm 1$ holds for any $\mathbf{l} \in \mathbb{Z}^{n-1}$ and $\mathbf{k} \in \mathbb{Z}^n$ such that $X(\mathbf{l}) \ne \emptyset$,
then there exists a way to assign signs to the monotone triangles such that one can construct the sijection
\[
	\textrm{MT}(\mathbf{k}) \sijto \bigsqcup_{\mu \in \textrm{AR}_n} \bigsqcup_{\mathbf{l} \in \mu(\mathbf{k})} \textrm{MT}(\mathbf{l}) \quad \left( = \bigsqcup_{\mathbf{l} \in \mathbb{Z}^{n-1}} \bigsqcup_{\mu \in \mathscr{M}_{\mathbf{k},\mathbf{l}}} \text{MT}(\mathbf{l}) \right),
\]
in the way similar to Construction \ref{iotamt}, where $\text{MT}(\mathbf{k})$ is a signed set of monotone triangles with that signs.
We prove that when $X(\mathbf{l}) \ne \emptyset$ it holds that $\#\mathscr{M}_{\mathbf{k},\mathbf{l}} = 0, \pm 1$.
First, we define a function $f \colon \bs{\text{AR}_n} \to \{0,1\}^{2n}$ by
\[
	\mu \mapsto \left(\delta_\nwarrow(\mu_1),\,\delta_\nearrow(\mu_1),\,\delta_\nwarrow(\mu_2),\,\delta_\nearrow(\mu_2),\,\ldots,\,\delta_\nwarrow(\mu_n),\,\delta_\nearrow(\mu_n)\right).
\]
Then, the image of $f$ is
\[
	\left\{ (a_{1,0},a_{1,1},a_{2,0},a_{2,1},\ldots,a_{n,0},a_{n,1}) \in \{0,1\}^{2n} \mid \forall i \in \{1,2,\ldots,n\},\, a_{i,0}+a_{i,1} \geq 1 \right\}.
\]

We fix $\mathbf{l} \in \mathbb{Z}^{n-1}$ and $\mathbf{k} \in \mathbb{Z}^n$.
For $\mu \in \bs{\text{AR}_n}$ to be included $\bs{\mathscr{M}_{\mathbf{k},\mathbf{l}}}$ we have to impose some conditions on it.
In detail, the conditions are described as follows:
\begin{quote}
\centering
\begin{tabular}{llcc}
\hline
& & conditions on $a$ & trans. mat. \\ \hline
$k_{i+1} \geq k_i+2$, & $l_i = k_i$ & $a_{i,1}=0$ & $A_1$ \\
 & $l_i = k_{i+1}-1$ & $a_{i+1,0}=0$ & $A_2$ \\
 & $k_i < l_i < k_{i+1}-1$ & True & $A_3$ \\
 & otherwise & False & $O$ \\\hline
$k_{i+1} = k_i+1$, & $l_i = k_i = k_{i+1}-1$ & $a_{i,1}=a_{i+1,0}$ & $A_4$ \\
 & otherwise & False & $O$ \\\hline
$k_{i+1} \leq k_i$, & $l_i = k_i$ & $a_{i,1}=1$ & $-A_5$ \\
 & $l_i = k_{i+1}-1$ & $a_{i+1,0}=1$ & $-A_6$ \\
 & $k_{i+1}-1 < l_i < k_i$ & True & $-A_3$ \\
 & otherwise & False & $O$ \\\hline
\end{tabular}
\end{quote}
where $i \in \{1,2,\ldots,n-1\}$, $(a_{1,0},a_{1,1},a_{2,0},a_{2,1},\ldots,a_{n,0},a_{n,1}) = f(\mu)$ and the data in the last column will be explained later.
For $j \in \{1,2,\ldots,n\}$ and $t \in \{0,1\}$, we define signed sets $B_{j,t}$ by
\begin{multline*}
	\bs{B_{j,t}} = \#\bigl\{ (a_{1,0},a_{1,1},a_{2,0},a_{2,1},\ldots,a_{j,0}) \in \{0,1\}^{2j-1}  \mid a_{j,0}=t \\ \text{ and for $i\in \{1,2,\ldots,j-1\}$}, \text{ it meets the condition described above and $a_{i,0}+a_{i,1} \geq 1$  } \bigr\},
\end{multline*}
with the sign of $(a_{1,0},a_{1,1},a_{2,0},a_{2,1},\ldots,a_{j,0})$ being $(-1)^r$, where $r$ is the sum of:
\begin{itemize}
\item the number of integers $i\in \{1,2,\ldots,j-1\}$ such that $a_{i,0}=a_{i,1}=1$,
\item the number of integers $i\in \{1,2,\ldots,j-1\}$ such that $k_i+a_{i,1} > k_{i+1} - a_{i+1,0}$.
\end{itemize}
We denote the size of $B_{j,t}$ by $b_{j,t}$.
Furthermore, we define the transition matrix by the relation
\[
	\begin{pmatrix} b_{j+1,0} \\ b_{j+1,1} \end{pmatrix} = T_j \begin{pmatrix} b_{j,0} \\ b_{j,1} \end{pmatrix},
\]
where $j \in \{1,2,\ldots,n-1\}$.
Then the value of $\#\mathscr{M}_{\mathbf{k},\mathbf{l}}$ is expressed as
\[
	\#\mathscr{M}_{\mathbf{k},\mathbf{l}} = \begin{pmatrix} 1 & 0 \end{pmatrix} T_{n-1} T_{n-2} \cdots T_1 \begin{pmatrix} 1 \\ 1 \end{pmatrix}.
\]
We can calculate $T_j$ from the value of $k_j$, $k_{j+1}$ and $l_j$.
The results are described in the last column of the above table, where
\begin{align*}
A_1 &= \begin{pmatrix} 0 & 1 \\ 0 & 1 \end{pmatrix}, &
A_2 &= \begin{pmatrix} 1 & 0 \\ 0 & 0 \end{pmatrix}, &
A_3 &= \begin{pmatrix} 1 & 0 \\ 1 & 0 \end{pmatrix}, \\
A_4 &= \begin{pmatrix} 0 & 1 \\ -1 & 1 \end{pmatrix}, &
A_5 &= \begin{pmatrix} 1 & -1 \\ 1 & -1 \end{pmatrix}, &
A_6 &= \begin{pmatrix} 0 & 0 \\ 1 & 0 \end{pmatrix}.
\end{align*}

Here, in order to use the condition $X(\mathbf{l}) \ne \emptyset$, we prove the following lemma.
For $\mathbf{m} = (m_i)_i \in \mathbb{Z}^d$, if there exists an integer $1 \leq i \leq d-2$ such that $m_i = m_{i+1}-1 = m_{i+2}-2$, we say it is partially successive.
\begin{lemma}
For a partially successive sequence $\mathbf{m} \in \mathbb{Z}^d$, we have $X(\mathbf{m}) = \emptyset$.
\end{lemma}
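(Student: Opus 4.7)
The plan is to show that the signed set $\text{MT}(\mathbf{m})$ is empty, from which $\#\text{MT}(\mathbf{m}) = 0$ is immediate. Fix an index $i$ witnessing the partial successivity hypothesis, so $1 \leq i \leq d-2$ and $m_i = a$, $m_{i+1} = a+1$, $m_{i+2} = a+2$ for some integer $a$. If $\mathbf{m}$ fails to be strictly increasing globally, then $\text{MT}(\mathbf{m}) = (\emptyset,\emptyset)$ directly from the definition recalled in Subsubsection~\ref{GMT}, so the nontrivial case is when $\mathbf{m}$ is strictly increasing; in particular $d \geq 3$.

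In this case I would argue by contradiction. Take any classical monotone triangle $(a_{j,k})_{1 \leq k \leq j \leq d}$ whose bottom row is $\mathbf{m}$, and look at the row immediately above, namely $(a_{d-1,1}, \ldots, a_{d-1,d-1})$. The Gelfand-Tsetlin condition $a_{d,k} \leq a_{d-1,k} < a_{d,k+1}$, specialised at $k = i$ and $k = i+1$, reads $a \leq a_{d-1,i} < a+1$ and $a+1 \leq a_{d-1,i+1} < a+2$. Both intervals are singletons, and they force $a_{d-1,i} = a$ and $a_{d-1,i+1} = a+1$.

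Now I would invoke the monotone-triangle condition on row $d-1$: in the half-open convention adopted in Subsubsection~\ref{def_MT} it reads $a_{d-1,k} < a_{d-1,k+1}-1$ for every $1 \leq k \leq d-2$. Setting $k = i$ gives $a < (a+1)-1 = a$, which is absurd. Thus no such monotone triangle exists, $\text{MT}(\mathbf{m})^+$ is empty, and combined with $\text{MT}(\mathbf{m})^- = \emptyset$ this yields $\#\text{MT}(\mathbf{m}) = 0$. There is no substantial obstacle: the whole proof is a definitional unfolding, and the only subtlety is noting that the range $1 \leq i \leq d-2$ guaranteed by the partial-successivity hypothesis is precisely what lets us apply the Gelfand-Tsetlin condition at $k = i, i+1$ and the monotone-triangle condition at $k = i$ simultaneously, which is automatic whenever $d \geq 3$.
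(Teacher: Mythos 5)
There is a genuine gap here: you have proved the lemma for the wrong object. The lemma sits inside the remark of Subsubsection~\ref{GMT} whose entire purpose is to \emph{redefine} $\text{MT}(\mathbf{m})$ for an arbitrary bottom row as the signed set of triangular arrays $(k_1,\ldots,k_d)$ with $k_1 \prec' k_2 \prec' \cdots \prec' k_d = \mathbf{m}$, where $\mathbf{l} \prec' \mathbf{k}$ means $\#\mathscr{M}_{\mathbf{k},\mathbf{l}} \neq 0$. Your opening move --- ``if $\mathbf{m}$ is not strictly increasing then $\text{MT}(\mathbf{m}) = (\emptyset,\emptyset)$ directly from the definition'' --- appeals to the narrow definition from the start of Subsubsection~\ref{GMT}, which is precisely the definition the remark is in the process of replacing. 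Under the $\prec'$-chain definition that dismissal is not available, and handling non-monotone $\mathbf{m}$ is the whole point: the lemma is invoked to discard partially successive $\mathbf{l}$ ranging over \emph{all} of $\mathbb{Z}^{n-1}$ in the verification that $\#\mathscr{M}_{\mathbf{k},\mathbf{l}} = 0,\pm 1$ whenever $\#\text{MT}(\mathbf{l}) \neq 0$; with your reading the lemma becomes essentially vacuous and cannot do that job. Even in the strictly increasing case your argument imports the classical Gelfand-Tsetlin inequalities and the within-row condition $a_{d-1,i} < a_{d-1,i+1}-1$ for the row above the bottom, neither of which is part of the $\prec'$-chain definition; that they happen to follow from $\mathbf{l} \prec' \mathbf{m}$ would itself have to be extracted from the transition-matrix description of $\mathscr{M}_{\mathbf{m},\mathbf{l}}$.

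The paper's proof instead takes an arbitrary chain $\mathbf{m}^{(1)} \prec' \cdots \prec' \mathbf{m}^{(d)} = \mathbf{m}$, passes to the first partially successive row $\mathbf{m}^{(i)}$ in it, and computes $\#\mathscr{M}_{\mathbf{m}^{(i)},\mathbf{m}^{(i-1)}}$ with the transition matrices: the two consecutive unit gaps force two adjacent factors equal to $A_4$, the factors immediately outside this block (or, in the boundary cases, the vectors $\begin{pmatrix} 1 & 0 \end{pmatrix}$ and $\begin{pmatrix} 1 \\ 1 \end{pmatrix}$ themselves) factor through $\begin{pmatrix} 1 & 0 \end{pmatrix}$ on one side and $\begin{pmatrix} 1 \\ 1 \end{pmatrix}$ on the other, and $\begin{pmatrix} 1 & 0 \end{pmatrix} A_4^2 \begin{pmatrix} 1 \\ 1 \end{pmatrix} = 0$, so $\mathbf{m}^{(i-1)} \not\prec' \mathbf{m}^{(i)}$, a contradiction. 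Your computation is correct as far as it goes, but it establishes only the classical case and does not yield the statement the remark actually needs.
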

\begin{proof}
We assume, for the sake of contradiction, that there exists a sequence $\mathbf{m}^{(1)} \prec' \mathbf{m}^{(2)} \prec' \cdots \prec' \mathbf{m}^{(d)} = \mathbf{m}$.
Let $i$ be the minimum index such that $\mathbf{m}^{(i)}$ is partially successive and $j$ the minimum index such that $m^{(i)}_j = m^{(i)}_{j+1}-1 = m^{(i)}_{j+2}-2$.
We calculate $\#\mathscr{M}_{\mathbf{m}^{(i)},\mathbf{m}^{(i-1)}}$. We set $(\mathbf{k},\mathbf{l}) = (\mathbf{m}^{(i)},\mathbf{m}^{(i-1)})$ in the above context and consider the transition matrices.
First, we have $T_j=T_{j+1}=A_4$ if they are not $O$ because of the choice of $j$.
Furthermore, we have $m^{(i-1)}_j = m^{(i)}_j = m^{(i-1)}_{j+1}-1 = m^{(i)}_{j+1}-1$.
Therefore, $m^{(i-1)}_{j+2} \ne m^{(i)}_{j+2}$ if it exists because of the choice of $i$.
Thus, we have $T_{j+2} = A_2, \pm A_3, -A_6, O$ or $j+2=i$.
Similarly, if $j \ne 1$ then $T_{j-1}=A_1,\pm A_3, -A_5, O$.
Here, we can decompose $A_2,A_3,A_6,O$ as
\begin{align*}
A_2 = \begin{pmatrix} 1 \\ 0 \end{pmatrix} \begin{pmatrix} 1 & 0 \end{pmatrix},&&
A_3 = \begin{pmatrix} 1 \\ 1 \end{pmatrix} \begin{pmatrix} 1 & 0 \end{pmatrix},&&
A_6 = \begin{pmatrix} 0 \\ 1 \end{pmatrix} \begin{pmatrix} 1 & 0 \end{pmatrix},&&
O = \begin{pmatrix} 0 \\ 0 \end{pmatrix} \begin{pmatrix} 1 & 0 \end{pmatrix},&&
\end{align*}
and $A_1,A_3,A_5,O$ as
\begin{align*}
A_1 = \begin{pmatrix} 1 \\ 1 \end{pmatrix} \begin{pmatrix} 0 & 1 \end{pmatrix},&&
A_3 = \begin{pmatrix} 1 \\ 1 \end{pmatrix} \begin{pmatrix} 1 & 0 \end{pmatrix},&&
A_5 = \begin{pmatrix} 1 \\ 1 \end{pmatrix} \begin{pmatrix} 1 & -1 \end{pmatrix},&&
O = \begin{pmatrix} 1 \\ 1 \end{pmatrix} \begin{pmatrix} 0 & 0 \end{pmatrix}.&&
\end{align*}
Therefore, we have
\[
	\#\mathscr{M}_{\mathbf{m}^{(i)},\mathbf{m}^{(i-1)}} = \begin{pmatrix} 1 & 0 \end{pmatrix} T_{i-1} T_{i-2} \cdots T_1 \begin{pmatrix} 1 \\ 1 \end{pmatrix} = 0.
\]
because $\begin{pmatrix} 1 & 0 \end{pmatrix} A_4^2 \begin{pmatrix} 1 \\ 1 \end{pmatrix} = 0$.
This means that $\mathbf{m}^{(i-1)} \not\prec' \mathbf{m}^{(i)}$, and it contradicts the assumption.
Finally, we obtain $X(\mathbf{m}) = \emptyset$, for a partially successive sequence $\mathbf{m}$.
\end{proof}
Coming back to the purpose of this remark, according to this lemma, it is sufficient to show that $\#\mathscr{M}_{\mathbf{k},\mathbf{l}} = 0, \pm 1$ when $\mathbf{l}$ is not partially successive.
We assume that $\mathbf{l}$ is not partially successive and $T_j=A_4$ for some $j$.
Then, we have $k_j = l_j = k_{j+1}-1$, so $j=1,n-1$ or $l_{j-1} \ne k_j-1$ or $l_{j+1} \ne k_{j+1}$.
Therefore, we have $j=1,n-1$ or $T_{j-1}=A_1,\pm A_3, -A_5, O$ or $T_{j+1} = A_2, \pm A_3, -A_6, O$.
Here, we have
\[
	\begin{pmatrix} 1 & 0 \end{pmatrix} A_4 = \begin{pmatrix} 0 & 1 \end{pmatrix},\qquad
	A_4 \begin{pmatrix} 1 \\ 1 \end{pmatrix} = \begin{pmatrix} 1 \\ 0 \end{pmatrix}
\]
and
\begin{align*}
A_2A_4 &= \begin{pmatrix} 0 & 1 \\ 0 & 0 \end{pmatrix} =: A_7,&
A_3A_4 &= \begin{pmatrix} 0 & 1 \\ 0 & 1 \end{pmatrix} = A_1,&
A_6A_4 &= \begin{pmatrix} 0 & 0 \\ 0 & 1 \end{pmatrix} =: A_8,\\
A_4A_1 &= \begin{pmatrix} 0 & 1 \\ 0 & 0 \end{pmatrix} = A_7,&
A_4A_3 &= \begin{pmatrix} 1 & 0 \\ 0 & 0 \end{pmatrix} = A_2,&
A_4A_5 &= \begin{pmatrix} 1 & -1 \\ 0 & 0 \end{pmatrix} =: A_9.
\end{align*}
Therefore, because the set $\left\{ \begin{pmatrix} 0 \\ 0 \end{pmatrix}, \pm\begin{pmatrix} 0 \\ 1 \end{pmatrix}, \pm\begin{pmatrix} 1 \\ 0 \end{pmatrix}, \pm\begin{pmatrix} 1 \\ 1 \end{pmatrix}\right\}$ is closed under the action of $\pm A_1,\pm A_2, \pm A_3,\pm A_5$, $\pm A_6,\pm A_7, \pm A_8,\pm A_9,O$, we obtain 
\[
	\#\mathscr{M}_{\mathbf{k},\mathbf{l}} = \begin{pmatrix} 1 & 0 \end{pmatrix} T_{n-1} T_{n-2} \cdots T_1 \begin{pmatrix} 1 \\ 1 \end{pmatrix} = 0,\pm 1.
\]

From the above, we can define the signed set $\text{MT}(\mathbf{k})$ of modified monotone triangles with bottom row $\mathbf{k}$ as follows:
\begin{itemize}
\item $\bs{\text{MT}(\mathbf{k})} = X(\mathbf{k})$, 
\item the sign of an element $(k_1,k_2,\ldots,k_n) \insupp \text{MT}(\mathbf{k})$ is $\#\mathscr{M}_{k_n,k_{n-1}} \#\mathscr{M}_{k_{n-1},k_{n-2}} \cdots \#\mathscr{M}_{k_2,k_1}$.
\end{itemize}
With this definition, we can construct a sijection $\iota \colon \text{MT}(\mathbf{k}) \sijto \text{GMT}(\mathbf{k})$ which is compatible with $\eta_\text{TA}$ in a way similar to Construction \ref{iotamt}.
In \cite{FK1}, Fischer and Konvalinka construct the sijection equivalent to $\iota \colon \text{MT}^\text{(FK)}(\mathbf{k}) \sijto \text{GMT}(\mathbf{k})$. We can check it is also compatible with $\eta_\text{TA}$.
Since the restrictions of $\eta_\text{TA}$ to $X(\mathbf{k})$ or to $\bs{\text{MT}^\text{(FK)}(\mathbf{k})}$ are injective, the two definitions of modified monotone triangles coincide.

In fact, the binary relation $\mathbf{l} \prec' \mathbf{k}$ is a bit different from the original one $\mathbf{l} \prec \mathbf{k}$.
After some cumbersome calculations, we obtain
\begin{multline*}
	\mathbf{l} \prec' \mathbf{k} \Leftrightarrow \mathbf{l} \prec \mathbf{k} \text{ and for any $i \in \{ 1,2,\ldots,n-3\}$, it does not hold that}\\ \text{ $l_i=k_{i+1}-1=l_{i+1}-1=k_{i+2}-2=l_{i+2}-2$}.
\end{multline*}
However, if $\mathbf{l} \not\prec' \mathbf{k}$ and $\mathbf{l} \prec \mathbf{k}$, then $\mathbf{l}$ is partially successive.
Therefore, the difference does not matter in the definition of modified monotone triangles.
Also, we can recover the explicit (namely, without the language of $\mathscr{M}$s) definition of the sign of an element of $\text{MT}(\mathbf{k})$ through direct calculation.
\end{Remark}

\subsubsection{Shifted Gelfand-Tsetlin Patterns}
Next, we define a shifted Gelfand-Tsetlin pattern.
An \textit{arrow pattern} $T=(t_{i,j})_{1 \leq i<j \leq n}$ is a triangular arrangement of $\{\searrow,\swarrow,\sxarrow\}$ that is indexed as below:
\[
\begin{array}{ccccccccc}
&&&&t_{1,n}&&&&\\
&&&t_{1,n-1}&&t_{2,n}&&&\\
&&t_{1,n-2}&&t_{2,n-1}&&t_{3,n}&&\\
&\iddots&&\vdots&&\vdots&&\ddots&\\
t_{1,2}&\cdots&\cdots&\cdots&\cdots&\cdots&\cdots&\cdots&t_{n-1,n}.
\end{array}
\]
The sign of an arrow pattern is defined as $(-1)^{\# of \expsxarrow}$,
or more formally, the signed set of arrow patterns with size $n$ is
\[\mathrm{AP}_n=(\{\searrow,\swarrow\},\{\sxarrow\})^{n-1} \times (\{\searrow,\swarrow\},\{\sxarrow\})^{n-2} \times \cdots \times (\{\searrow,\swarrow\},\{\sxarrow\})^{1}\simeq(\{\searrow,\swarrow\},\{\sxarrow\})^{\binom{n}{2}},\]
where $\mathrm{AP}_1=(\{\emptyset\},\emptyset)$.
An arrow pattern $T=(t_{i,j})_{1 \leq i<j \leq n}$ acts on $\mathbf{k} \in \mathbb{Z}^n$ as:
\begin{align*}
	c_i(T)&=\sum_{j=i+1}^{n} \delta_{\swarrow}(t_{i,j}) - \sum_{j=1}^{i-1} \delta_{\searrow}(t_{i,j}) \\
	T(\mathbf{k})&=(k_1+c_1(T),k_2+c_2(T),\ldots,k_n+c_n(T)) \in \mathbb{Z}^n,
\end{align*}
where $\delta_{\searrow}(\searrow)=\delta_{\searrow}(\sxarrow)=1$, $\delta_{\searrow}(\swarrow)=0$ and $\delta_{\swarrow}(\swarrow)=\delta_{\swarrow}(\sxarrow)=1$, $\delta_{\swarrow}(\searrow)=0$.
Lastly, the signed set of shifted Gelfand-Tsetlin patterns with bottom row $\mathbf{k} \in \mathbb{Z}^n$ is defined as
\[
	\textrm{SGT}(\mathbf{k})=\bigsqcup_{T \in \textrm{AP}_n} \textrm{GT}(T(\mathbf{k})).
\]

We define the top of a shifted Gelfand-Tsetlin pattern $(A,T) \insupp \text{SGT}(\mathbf{k})$ by $\eta_\text{top}((A,T)) = \eta_\text{top}(A)$.
Note that $A$ is an element of $\bs{\text{GT}(T(\mathbf{k}))}$.
\begin{ex}
Let $T=((\searrow,\sxarrow,\swarrow),(\sxarrow,\swarrow),(\swarrow)) \insupp \text{AP}_4$. When we arrange its elements as
\[
	\setlength{\arraycolsep}{1.5pt} 
	\begin{array}{ccccccc}&&&\swarrow&&&\\&&\sxarrow&&\swarrow&&\\&\searrow&&\sxarrow&&\swarrow&\\ \square&&\square&&\square&&\square \end{array},
\]
$c_i(T)$ is the number of arrows on the anti-diagonal line of the $i$-th square from the left and pointing at it, minus the number of arrows on the diagonal line pointing at the square. Then, we have
\begin{align*}
	c_1(T)=2-0=2,&&c_2(T)=2-1=1,&&c_3(T)=1-2=-1,&&c_4(T)=0-0=0.
\end{align*}
Therefore, when let $\mathbf{k}=(3,1,4,1)$, then $T(\mathbf{k}) = (5,2,3,1)$. Thus, for example we have
\[
	\left(\quad\begin{array}{ccccccc}&&&2&&&\\&&3&&1&&\\&4&&2&&1&\\ 5&&2&&3&&1 \end{array}\quad,
	\setlength{\arraycolsep}{1.5pt} 
	\quad\begin{array}{ccccc}&&\swarrow&&\\&\sxarrow&&\swarrow&\\\searrow&&\sxarrow&&\swarrow \end{array}\quad\right) \insupp \text{SGT}((3,1,4,1)),
\]
and its sign is $(-1)^2 \cdot (-1)^5 = 1$.
Last, the value of $\eta_\text{top}$ on this element is $2$.
\end{ex}
\begin{remark}
A shifted Gelfand-Tsetlin pattern is introduced in \cite{FK1} to explain the combinatorial meaning of an `operator formula' stating that
\[
	\text{MT}^{\text{(g)}}(\mathbf{k}) = \prod_{1 \leq p<q \leq n}(E_{k_p} + E_{k_q}^{-1} - E_{k_p} E_{k_q}^{-1}) \prod_{1 \leq i < j \leq n} \frac{k_j-k_i}{j-i},
\]
where $E_x$ denotes the shift operator, i.e., $E_xp(x)=p(x+1)$ (This is cited from the theorem in the induction of \cite{FK1}).
It holds that $\text{GT}(\mathbf{k}) = \prod_{1 \leq i < j \leq n} \frac{k_j-k_i}{j-i}$, and the action of an arrow pattern on $\mathbf{k}$ corresponds to the operator part $\prod_{1 \leq p<q \leq n}(E_{k_p} + E_{k_q}^{-1} - E_{k_p} E_{k_q}^{-1})$.
In this sense, shifted Gelfand-Tsetlin patterns are a combinatorial realization of this operator formula.
\end{remark}

\subsection{Generalized Inversion Numbers}\label{ssec_GIN}
In this subsection, we define generalized inversion numbers of alternating sign matrices, modified monotone triangles, generalized monotone triangles and shifted Gelfand-Tsetlin patterns.
First, we recall the definition of an inversion number of a permutation and a permutation matrix.
For a permutation $p = (p_1,p_2,\ldots,p_n) \in \mathfrak{S}_n$, its inversion number is the number of pairs $(i,j)$ such that $1 \leq i<j \leq n$ and $p(i)>p(j)$.
A permutation matrix $A=(a_{ij})_{ij}$ corresponding to $p$ is defined by $a_{ij}=\delta_{j,p(i)}$, and its inversion number is that of $p$ itself.
In fact, the inversion number $\eta_\text{inv}(A)$ can be described by using the elements of $A$ as below \cite{BFZ1}:
\[
	\eta_\text{inv}(A) = \sum_{1 \leq i < i' \leq n,\, 1 \leq j' \leq j \leq n} a_{ij}a_{i'j'}.
\]
This is also the definition of the inversion number for alternating sign matrices. Namely, for an alternating sign matrix $A=(a_{ij})_{ij}$, we define its inversion number by the above equation.

\begin{remark}
This statistic is one of four statistics in Behrend, Di Francesco and Zinn-Justin's refined enumeration of alternating sign matrices and descending plane partitions \cite{BFZ1,BFZ2}, as well as one of three statistics in Mills, Robbins and Rumsey Jr.'s conjecture \cite{MRR}.
As stated in \cite{BFZ1}, there exists a variant of an inversion number for alternating sign matrices, \[\eta_\textrm{inv'}(A) = \sum_{1 \leq i < i' \leq n,\, 1 \leq j' < j \leq n} a_{ij}a_{i'j'}.\]
However, we adopt the former definition because the corresponding statistic for shifted Gelfand-Tsetlin patterns can be described simply (See \ref{GIN_SGT}).
\end{remark}

\subsubsection{Generalized Inversion Numbers for Modified Monotone Triangles}
We define the inversion number $\eta_\text{inv}(B)$ of a modified monotone triangle $B = (b_{ij})_{1 \leq j \leq i \leq n}$ by
\[
	\eta_\text{inv}(B) = \# \{ (i,j) \mid 1 \leq j \leq i \leq n-1,\, b_{i+1,j} \leq b_{i,j} = b_{i+1,j+1}-1 \}.
\]
This definition is given in \cite{Fis18} without details.
In the rest of this subsubsection, we will show that this definition is compatible with the bijection introduced in \ref{def_MT}.
Namely, let $A$ be an alternating sign matrix and $B$ the modified monotone triangle corresponding to $A$. We shall prove that $\eta_\text{inv}(A) = \eta_\text{inv}(B)$.
First, we have
\begin{align*}
	\eta_\text{inv}(A) &= \sum_{1 \leq i < i' \leq n,\, 1 \leq j' \leq j \leq n} a_{ij}a_{i'j'} \\
	&=\sum_{1 \leq i' \leq n} \sum_{1 \leq j \leq n} \sum_{1 \leq i < i'} \sum_{1 \leq j' \leq j} a_{ij}a_{i'j'} \\
	&=\sum_{1 \leq i' \leq n} \sum_{1 \leq j \leq n} \left(\sum_{1 \leq i < i'} a_{ij} \right) \left(\sum_{1 \leq j' \leq j} a_{i'j'} \right).
\end{align*}
Let $\displaystyle c_{i,j}=\sum_{1 \leq i' \leq i} a_{i'j}$ and $d_{i,j} = b_{ij}-(j-1)$. Note that these coincide with elements of intermediate products in the bijection between alternating sign matrices and modified monotone triangles (See \ref{def_MT}).
Here, we have $c_{ij} \in \{0,1\}$ by the definition of an alternating sign matrix.
Then, we have
\[
	\sum_{1 \leq i < i'} a_{ij} = 1 \Leftrightarrow c_{i'-1,j}=1 \Leftrightarrow \exists \tilde{j} \in \{1,2,\ldots,i'-1\}, d_{i'-1,\tilde{j}} = j.
\]
Therefore, we have
\begin{align*}
	\eta_\text{inv}(A)
	&= \sum_{1 \leq i' \leq n}\quad \sum_{j=d_{i'-1,1},d_{i'-1,2},\ldots,d_{i'-1,i'-1}} \left(\sum_{1 \leq j' \leq j} a_{i'j'} \right) \\
	&= \sum_{1 \leq \tilde{j} \leq i' \leq n-1} \left(\sum_{1 \leq j' \leq d_{i'\tilde{j}}} a_{i'+1,j'} \right) \\
	&= \sum_{1 \leq j \leq i \leq n-1} \left(\sum_{1 \leq j' \leq d_{ij}} a_{i+1,j'} \right).
\end{align*}
We fix $(i,j)$ such that $1 \leq j \leq i \leq n-1$ and let $k = d_{ij}$.
We prove
\[
	\sum_{1 \leq j' \leq k} a_{i+1,j'} = 1 \Leftrightarrow d_{i+1,j} \leq d_{i,j} = d_{i+1,j+1},
\]
by considering the following two possibilities.
\begin{itemize}
\item Case 1: Assume that $a_{i+1,k}=-1$. By the definition of an alternating sign matrix, we have $\displaystyle \sum_{1 \leq j' \leq k} a_{i+1,j'}=0$ and $c_{i+1,k}=0$.
In particular, $c_{i+1,k}=0$ means $d_{i+1,j+1} \ne k$. Therefore both the RHS and the LHS are false.
\item Case 2: Assume that $a_{i+1,k} \in \{0,1\}$. 
Since $k=d_{i,j}$, we have $c_{i,k}=1$. Therefore, we have $c_{i+1,k}=1$ and $a_{i+1,k}=0$.
In particular, the RHS is equivalent to $\#\{ j' \mid 1 \leq j' \leq k,\, c_{i+1,j'} =1 \} = j+1$,
and then it is equivalent to $\sum_{i'=1}^{i+1} \sum_{j'=1}^k a_{i'j'} = j+1$.
Since $\sum_{i'=1}^{i} \sum_{j'=1}^k a_{i'j'} = \sum_{j'=1}^k c_{ij'} = j$, it is equivalent to the LHS.
\end{itemize}
From the above, we have
\begin{align*}
	\eta_\text{inv}(A) &= \# \{ (i,j) \mid 1 \leq j \leq i \leq n-1,\, d_{i+1,j} \leq d_{i,j} = d_{i+1,j+1} \} \\
	&= \# \{ (i,j) \mid 1 \leq j \leq i \leq n-1,\, b_{i+1,j} \leq b_{i,j} = b_{i+1,j+1}-1 \} =: \eta_\text{inv}(B).
\end{align*}

\subsubsection{Generalized Inversion Numbers for Generalized Monotone Triangles}
We define the inversion number of a generalized monotone triangle as the number of $\nearrow$ and $\nxarrow$ it includes.
More formally,
we define the inversion number of a generalized monotone triangle $$T=\left(\mu^{(n)},\mathbf{k}^{(n-1)},\mu^{(n-1)},\ldots,\mathbf{k}^{(1)},\mu^{(1)}\right) \in \textrm{GMT}(\mathbf{k})$$ by
\[
	\eta_\text{inv}(T) = \#\left\{ (i,j) \mid 1 \leq j \leq i \leq n,\, \mu^{(i)}_j = \nearrow \text{ or } \nxarrow \right\}.
\]
The following proposition guarantees that this definition is indeed a generalization of the inversion number for modified monotone triangles.
For this proof, we define the inversion number of an arrow row $\mu \in \text{AR}_n$ as the number of $\nearrow$ and $\nxarrow$ it includes.
\begin{proposition}\label{prop::GIN_iota}
Let $n \in \mathbb{Z}$ and let $\mathbf{k} \in \mathbb{Z}^n$ be strictly increasing.
The sijection $\iota_\text{MT} \colon \text{MT}(\mathbf{k}) \sijto \text{GMT}(\mathbf{k})$ in Construction \ref{iotamt} is compatible with the inversion number.
\end{proposition}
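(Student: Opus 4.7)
The plan is to proceed by induction on $n$, following the recursive structure of Construction \ref{iotamt}. Since $\iota_{\text{MT}(\mathbf{k})} = \bigl(\bigsqcup_{\mu}\bigsqcup_{\mathbf{l}} \iota_{\text{MT}(\mathbf{l})}\bigr) \circ \Xi_{\mathbf{k}}$ and compatibility is preserved under composition and disjoint union (Lemmas \ref{lem:comp_comp} and \ref{lemofsqcup}), it suffices to verify that $\Xi_{\mathbf{k}}$ is compatible with an intermediate statistic $\tilde\eta_{\text{inv}}$ on $\bigsqcup_{\mu \in \mathrm{AR}_n}\bigsqcup_{\mathbf{l} \in \mu(\mathbf{k})}\text{MT}(\mathbf{l})$ defined by $\tilde\eta_{\text{inv}}((B', \mathbf{l}, \mu)) = \eta_{\text{inv}}(B') + \#\{i : \mu_i \in \{\nearrow, \nxarrow\}\}$. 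The base case $n=1$ is a direct check: the pairs are $k_1 \leftrightarrow \nwarrow$ (both inversion number $0$) and $\nearrow \leftrightarrow \nxarrow$ (both inversion number $1$).

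For the inductive step, I analyze the two ingredients of the sijection $(\{\cdot\},\emptyset) \sijto \mathscr{M}_{\mathbf{l}}$ appearing inside $\Xi_{\mathbf{k}}$. For the \emph{canonical} matching, a monotone triangle $B \in \text{MT}(\mathbf{k})$ with second-bottom row $\mathbf{l}$ corresponds to $(B', \mathbf{l}, \mu_{\mathbf{l}})$, where $B'$ is the upper part of $B$. Since $k_j \leq l_j \leq k_{j+1}-1$ holds automatically for a monotone triangle, the bottom-level contribution to $\eta_{\text{inv}}(B)$, namely $\#\{j : 1 \leq j \leq n-1, \ k_j \leq l_j = k_{j+1}-1\}$, collapses to $\#\{j : 1 \leq j \leq n-1, \ l_j = k_{j+1}-1\}$, which is exactly $\eta_{\text{inv}}(\mu_{\mathbf{l}})$ by the definition of $\mu_{\mathbf{l}}$ (noting that $\mu_{\mathbf{l}}$ never contains $\nxarrow$, so its inversion number equals its number of $\nearrow$). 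Hence $\eta_{\text{inv}}(B) = \eta_{\text{inv}}(B') + \eta_{\text{inv}}(\mu_{\mathbf{l}}) = \tilde\eta_{\text{inv}}((B', \mathbf{l}, \mu_{\mathbf{l}}))$. For the \emph{non-canonical} involutive pairing of $\mathscr{M}_{\mathbf{l}} \setminus \{\mu_{\mathbf{l}}\}$, the two paired arrow rows share the same $B'$ and $\mathbf{l}$ and differ at the single minimum index $i$ where they disagree with $\mu_{\mathbf{l}}$; crucially, the forcing condition ``$\mu_i = \nearrow$ when $i \geq 2$ and $k_i = l_{i-1}+1$'' in the definition of $\mathscr{M}_{\mathbf{l}}$ implies that this minimum differing index must satisfy $(\mu_{\mathbf{l}})_i = \nwarrow$, so the ``add/remove $\nwarrow$'' toggle swaps $\nearrow$ with $\nxarrow$, and both contribute $1$ to $\#\{i : \mu_i \in \{\nearrow, \nxarrow\}\}$.

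The main obstacle lies in the constraint analysis for the non-canonical pairing: one must confirm that the toggle is never between $\nwarrow$ and $\nxarrow$ (which would change the inversion count by $1$) or between $\nwarrow$ and $\nearrow$. This is precisely the subtle point highlighted in the remark immediately following Construction \ref{iotamt}: the definition of $(\mu_{\mathbf{l}})_n$ in our variant---in particular assigning $(\mu_{\mathbf{l}})_n = \nearrow$ when $k_n = l_{n-1}+1$---differs from Fischer and Konvalinka's construction exactly so that the bottom-row contribution to $\eta_{\text{inv}}(B)$ at the position $j = n-1$ lines up with a $\nearrow$ in $\mu_{\mathbf{l}}$, and this alignment is what restores compatibility with the inversion statistic.
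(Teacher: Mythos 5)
Your proof is correct and follows essentially the same route as the paper's: induction on $n$, reduction via Lemmas \ref{lem:comp_comp} and \ref{lemofsqcup} to the compatibility of $\Xi_\mathbf{k}$ with the intermediate statistic $\eta_\text{inv}(B')+\eta_\text{inv}(\mu)$, the identification of the bottom-level inversion count with the number of $\nearrow$ in $\mu_\mathbf{l}$ for the canonical matching, and the observation that the involutive pairing only ever toggles $\nearrow \leftrightarrow \nxarrow$. If anything, you make explicit a point the paper leaves implicit, namely that the forcing condition in $\mathscr{M}_\mathbf{l}$ rules out a toggle at an index where $(\mu_\mathbf{l})_i = \nearrow$.
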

\begin{proof}
The sijection is constructed by induction on $n$. When $n=1$, the compatibility can be checked easily.
For $n>1$, it is constructed along the following relation (see Construction \ref{iotamt}):
\begin{equation} \label{porp::GIN_iota-eq1}
	\textrm{MT}(\mathbf{k})
	\sijto \bigsqcup_{\mu \in \textrm{AR}_n} \bigsqcup_{\mathbf{l} \in \mu(\mathbf{k})} \textrm{MT}(\mathbf{l})
	\sijto \bigsqcup_{\mu \in \textrm{AR}_n} \bigsqcup_{\mathbf{l} \in \mu(\mathbf{k})} \textrm{GMT}(\mathbf{l})
	= \textrm{GMT}(\mathbf{k}).
\end{equation}
We define the inversion number of $\displaystyle(T,\mathbf{l},\mu) \insupp \bigsqcup_{\mu \in \textrm{AR}_n} \bigsqcup_{\mathbf{l} \in \mu(\mathbf{k})} \textrm{MT}(\mathbf{l})$ by
$$\eta_\text{inv}\left((T,\mathbf{l},\mu)\right) = \eta_\text{inv}(\mu) + \eta_\text{inv}(T),$$
and the inversion number of $\displaystyle(T,\mathbf{l},\mu) \insupp{\bigsqcup_{\mu \in \textrm{AR}_n} \bigsqcup_{\mathbf{l} \in \mu(\mathbf{k})} \textrm{GMT}(\mathbf{l})}$ by
$$\eta_\text{inv}\left((T,\mathbf{l},\mu)\right) = \eta_\text{inv}(\mu) + \eta_\text{inv}(T).$$
Note that the trivial sijection $\displaystyle\bigsqcup_{\mu \in \textrm{AR}_n} \bigsqcup_{\mathbf{l} \in \mu(\mathbf{k})} \textrm{GMT}(\mathbf{l})=\textrm{GMT}(\mathbf{k})$ in $(\ref{porp::GIN_iota-eq1})$
is compatible with the inversion number.
In addition, the compatibility of the sijection $\displaystyle \bigsqcup_{\mu \in \textrm{AR}_n} \bigsqcup_{\mathbf{l} \in \mu(\mathbf{k})} \textrm{MT}(\mathbf{l}) \sijto \bigsqcup_{\mu \in \textrm{AR}_n}\bigsqcup_{\mathbf{l} \in \mu(\mathbf{k})} \textrm{GMT}(\mathbf{l})$ follows from the results for smaller $n$.
According to Lemma \ref{lem:comp_comp}, it is sufficient to show that the sijection $\displaystyle\textrm{MT}(\mathbf{k}) \sijto \bigsqcup_{\mu \in \textrm{AR}_n} \bigsqcup_{\mathbf{l} \in \mu(\mathbf{k})} \textrm{MT}(\mathbf{l})$ is compatible with the inversion number.

Let $\Phi$ be the sijection $\displaystyle\textrm{MT}(\mathbf{k}) \sijto \bigsqcup_{\mu \in \textrm{AR}_n} \bigsqcup_{\mathbf{l} \in \mu(\mathbf{k})} \textrm{MT}(\mathbf{l})$.
Because $\text{MT}(\mathbf{k})^- = \emptyset$, it is sufficient to show that
\begin{equation} \label{porp::GIN_iota-eq2}
\eta_\text{inv}\left((T,\mathbf{l},\mu)\right) = \eta_\text{inv}\left(\Phi((T,\mathbf{l},\mu))\right)
\end{equation}
 for any $\displaystyle(T,\mathbf{l},\mu) \insupp{\bigsqcup_{\mu \in \textrm{AR}_n} \bigsqcup_{\mathbf{l} \in \mu(\mathbf{k})} \textrm{MT}(\mathbf{l})}$.
When $\displaystyle \Phi((T,\mathbf{l},\mu)) \insupp{\bigsqcup_{\mu \in \textrm{AR}_n} \bigsqcup_{\mathbf{l} \in \mu(\mathbf{k})} \textrm{MT}(\mathbf{l})}$,
$(\ref{porp::GIN_iota-eq2})$ follows from the fact that $\Phi((T,\mathbf{l},\mu))$ is given by adding/removing $\nwarrow$ to/from $\mu$ of $(T,\mathbf{l},\mu)$.
In the other case $\mu$ must be $\mu_\mathbf{l}$, where $\mu_\mathbf{l}$ is defined in Construction \ref{iotamt} as follows:
\[
	(\mu_\mathbf{l})_i = \begin{cases} \nearrow & \text{when $i \geq 2$ and $k_i=l_{i-1}+1$}, \\ \nwarrow & \text{otherwise}. \end{cases}
\]
Let $T' = \Phi((T,\mathbf{l},\mu)) \in \bs{\text{MT}(\mathbf{k})}$.
Since $\mathbf{k}$ is strictly increasing, $k_{i-1} \leq l_{i-1} = k_i-1$ holds if and only if $(\mu_\mathbf{l})_i = \nearrow$.
Therefore, the number of $\nearrow$ in $\mu_\mathbf{l}$ coincides with $\eta_\text{inv}(T')-\eta_\text{inv}(T)$.
\end{proof}

\begin{remark}
We extended the definition of (modified) monotone triangles as pure triangular arrays to general bottom rows in Remark \ref{uniqMT},
but we cannot define the inversion number for them for the sijection $\iota \colon \text{MT} \sijto \text{GMT}$ to become compatible with it.
For example, let $\mathbf{k} = (3,1)$ and $\mathbf{l}=(0)$, then we have
\[
	\mathscr{M}_{\mathbf{k},\mathbf{l}} = (\{ \nwarrow, \nearrow \}, \{ \nxarrow \}) \times (\{ \nwarrow \},\{ \nxarrow \}).
\]
Then the unsigned distribution of $\eta_\text{inv}$ on $\bs{\mathscr{M}_{\mathbf{k},\mathbf{l}}}$  is 
\[
\begin{tabular}{ccc} \hline
$\eta_\text{inv}=0$ & $\eta_\text{inv}=1$ & $\eta_\text{inv}=2$ \\ \hline
$1$ & $3$ & $2$ \\\hline
\end{tabular}.
\]
Because the sijection $\iota$ is compatible with $\eta_\text{TA}$, it cannot be compatible with $\eta_\text{inv}$.
\end{remark}

\subsubsection{Generalized Inversion Numbers for Shifted Gelfand-Tsetlin Patterns}\label{GIN_SGT}
We define the inversion number of a shifted Gelfand-Tsetlin pattern
$(A,T) \insupp \bigsqcup_{T \in \textrm{AP}_n} \textrm{GT}(T(\mathbf{k})) = \textrm{SGT}(\mathbf{k})$
as the number of $\swarrow$ and $\sxarrow$ in $T$.
For example, the inversion number of 
\[
	\left( \begin{array}{ccccc}&&3&&\\&1&&4&\\1&&5&&2 \end{array}, \begin{array}{ccc} &\swarrow& \\ \searrow&&\sxarrow \end{array} \right) \insupp \text{SGT}(0,5,3),
\]
is two.
This definition is simple so it is meaningful to ask for compatibility with it.
However, Fischer and Konvalinka's construction is not compatible with this statistic.
For example, according to their paper \cite{FK1}, $$\left( \begin{array}{ccccc}&&2&&\\&1&&4&\\1&&3&&5 \end{array} \right) \insupp \text{MT}(1,3,5)$$ corresponds to 
$$\left(\begin{array}{ccccc}&&2&&\\&2&&4&\\2&&3&&5 \end{array},\, \begin{array}{ccc}&\swarrow&\\ \searrow&&\swarrow \end{array} \right) \insupp \text{SGT}(1,3,5),$$
but the inversion number of the former is $1$, which is different from that of the latter, which is $2$.
The rest of this section is devoted to the construction of a sijection $\text{GMT}(\mathbf{k}) \sijto \text{SGT}(\mathbf{k})$ compatible with these inversion numbers.

\subsection{
A sijection $\text{GMT}(\mathbf{k}) \sijto \text{SGT}(\mathbf{k})$}\label{ASM-SGT::KR}
Fischer and Konvalinka construct a sijection between $\text{GMT}(\mathbf{k})$ and $\text{SGT}(\mathbf{k})$ in \cite{FK1}.
They first construct a sijection $\Phi_{\mathbf{k},x} \colon \bigsqcup_{\mu \in \textrm{AR}_n} \bigsqcup_{\mathbf{l} \in \mu(\mathbf{k})} \textrm{SGT}(\mathbf{l}) \sijto \text{SGT}(\mathbf{k})$,
and then construct the desired sijection by induction on $n$ as (cf. Construction \ref{iotamt}):
\[
	\textrm{GMT}(\mathbf{k})
	= \bigsqcup_{\mu \in \textrm{AR}_n} \bigsqcup_{\mathbf{l} \in \mu(\mathbf{k})} \textrm{GMT}(\mathbf{l})
	\sijto \bigsqcup_{\mu \in \textrm{AR}_n} \bigsqcup_{\mathbf{l} \in \mu(\mathbf{k})} \textrm{SGT}(\mathbf{l})
	\sijto \textrm{SGT}(\mathbf{k}).
\]
For more details, see Problem 10 in \cite{FK1}. Their construction of $\Phi$ is given by composing the four sijections:
\begin{align*}
	&\bigsqcup_{\mu \in \textrm{AR}_n} \bigsqcup_{\mathbf{l} \in \mu(\mathbf{k})} \textrm{SGT}(\mathbf{l}) \\
	&\qquad \overset{\Phi_1}{\sijto} \bigsqcup_{\mu \in \text{AR}_n} \bigsqcup_{T \in \text{AP}_{n-1}} \bigsqcup_{\mathbf{m} \in S_1 \times S_2 \times \cdots S_{n-1}} \text{GT}(m_1+c_1(T),m_2+c_2(T),\ldots,m_{n-1}+c_{n-1}(T),x) \\
	&\qquad \overset{\Phi_2}{\sijto} \bigsqcup_{\mu \in \text{AR}_n} \bigsqcup_{T \in \text{AP}_{n}} \bigsqcup_{\mathbf{m} \in S_1 \times S_2 \times \cdots S_{n-1}} \text{GT}(m_1+c_1(T),m_2+c_2(T),\ldots,m_{n-1}+c_{n-1}(T),x) \\
	&\qquad \overset{\Phi_3}{\sijto} \bigsqcup_{\mu \in \text{AR}_n} \bigsqcup_{T \in \text{AP}_{n}} \bigsqcup_{i=1}^n \text{GT}(k_1+\delta_{\nearrow}(\mu_1)+c_1(T),\ldots,k_{i-1}+\delta_{\nearrow}(\mu_{i-1})+c_{i-1}(T), \\
	& \hspace{7cm} x,k_{i+1}-\delta_{\nwarrow}(\mu_{i+1})+c_{i}(T),\ldots,k_{n}-\delta_{\nwarrow}(\mu_{n})+c_{n-1}(T)) \\
	&\qquad \overset{\Phi_4}{\sijto} \text{SGT}(\mathbf{k}).
\end{align*}
Here, $S_i = (\{ k_{i}+\delta_{\nearrow}(\mu_{i}) \}, \{ k_{i+1}-\delta_{\nwarrow}(\mu_{i+1}) \} )$.
Note that we have slightly modified their result following our use of half-open intervals.
Our goal is to obtain a sijection compatible with the statistics $\eta_\text{top}$ and $\eta_\text{inv}$ by modifying the construction.
To describe our construction, we prepare some notations.
\begin{itemize}
\item Let $\Omega$ be a signed set $(\{0\},\{1\})$.
\item We define a function $m_i \colon \bs{\text{AR}_n} \times \bs{\text{AP}_{n-1}} \times \bs{\Omega} \to \mathbb{Z}$ by
\begin{align*}
m_i(\mu,T,0) = k_i+\delta_\nearrow(\mu_i)+c_i(T),&&m_i(\mu,T,1) = k_{i+1}-\delta_\nwarrow(\mu_{i+1})+c_i(T).
\end{align*}
\item We define an involution $r$ on arrows that acts on them as reversing the direction. For example, 
\begin{align*}
r(\nwarrow) = \searrow,&&r(\sxarrow) = \nxarrow.
\end{align*}
\end{itemize}
Our construction consists of $\Phi_1$, a modified $\Phi_3$ and a modified $\Phi_4$:
\begin{align*}
	&\bigsqcup_{\mu \in \textrm{AR}_n} \bigsqcup_{\mathbf{l} \in \mu(\mathbf{k})} \textrm{SGT}(\mathbf{l}) \\
	&\qquad \overset{\Phi_1}{\sijto} \bigsqcup_{\mu \in \text{AR}_n} \bigsqcup_{T \in \text{AP}_{n-1}} \bigsqcup_{\mathbf{m} \in S_1 \times S_2 \times \cdots S_{n-1}} \text{GT}(m_1+c_1(T),m_2+c_2(T),\ldots,m_{n-1}+c_{n-1}(T),x) \\
	&\qquad \overset{\Phi_3'}{\sijto} \bigsqcup_{i=1}^n \bigsqcup_{\mu \in \text{AR}_n} \bigsqcup_{T \in \text{AP}_{n-1}} 
	\text{GT}(m_1(\mu,T,0),\ldots,m_{i-1}(\mu,T,0), x,m_i(\mu,T,1),\ldots,m_{n-1}(\mu,T,1)) \\
	&\qquad \overset{\Phi_4'}{\sijto} \text{SGT}(\mathbf{k}).
\end{align*}

The sijection $\Phi_3'$ is essentially the same as $\Phi_3$, but the difference between  $\Phi_4'$ and $\Phi_4$ plays a key role for compatibility of $\Phi$.
Therefore, we explain the constructions of $\Phi_1$ and $\Phi_3'$ in this subsection as essentially known results and we explain $\Phi_4'$ in the next subsection.
The results in this subsection are established in \cite{FK1}.
As was the case in section \ref{secofGT}, our explanation is more transparent than the original one because of the use of half-open intervals.

\begin{construction}[See also Problem. 9 in \cite{FK1}] \label{Phi1}
We construct $\Phi_1$.
By the definition of shifted Gelfand-Tsetlin patterns and by applying Construction \ref{du_ord}, we have
\begin{align*}
	\bigsqcup_{\mu \in \textrm{AR}_n} \bigsqcup_{\mathbf{l} \in \mu(\mathbf{k})} \textrm{SGT}(\mathbf{l})
	&= \bigsqcup_{\mu \in \textrm{AR}_n} \bigsqcup_{\mathbf{l} \in \mu(\mathbf{k})} \bigsqcup_{T \in \textrm{AP}_{n-1}} \textrm{GT}(T(\mathbf{l})) \\
	&= \bigsqcup_{\mu \in \textrm{AR}_n} \bigsqcup_{T \in \textrm{AP}_{n-1}}  \bigsqcup_{\mathbf{l} \in \mu(\mathbf{k})}  \textrm{GT}(T(\mathbf{l})) \\
	&= \bigsqcup_{\mu \in \textrm{AR}_n} \bigsqcup_{T \in \textrm{AP}_{n-1}}  \bigsqcup_{\mathbf{l}' \in \bigsqcup_{\mathbf{l} \in \mu(\mathbf{k})} (\{T(\mathbf{l})\},\emptyset)} \textrm{GT}(\mathbf{l}').
\end{align*}
Since
\begin{multline*}
	\bigsqcup_{\mathbf{l} \in \mu(\mathbf{k})} (\{T(\mathbf{l})\},\emptyset)
	=
	\sint{k_1+\delta_{\nearrow}(\mu_1)+c_1(T)}{k_2-\delta_{\nwarrow}(\mu_2)+c_1(T)}\\ \times 
	\cdots\times\sint{k_{n-1}+\delta_{\nearrow}(\mu_{n-1})+c_{n-1}(T)}{k_n-\delta_{\nwarrow}(\mu_n)+c_{n-1}(T)},
\end{multline*}
using $\rho$ from Construction \ref{rho}, we obtain the construction.
\qed
\end{construction}

\begin{construction}[See also Problem. 9 in \cite{FK1}] \label{Phi3}
We construct $\Phi_3'$.
Here, we have
\begin{align*}
	&\bigsqcup_{\mu \in \text{AR}_n} \bigsqcup_{T \in \text{AP}_{n-1}} \bigsqcup_{\mathbf{m} \in S_1 \times S_2 \times \cdots S_{n-1}} \text{GT}(m_1+c_1(T),m_2+c_2(T),\ldots,m_{n-1}+c_{n-1}(T),x) \\
	&\qquad= \bigsqcup_{\mu \in \text{AR}_n}\bigsqcup_{T \in \text{AP}_{n-1}} \bigsqcup_{\omega \in \Omega^{n-1}}
	\text{GT}(m_1(\mu,T,\omega_1),m_2(\mu,T,\omega_2),\ldots,m_{n-1}(\mu,T,\omega_{n-1}),x) \\
	&\qquad= \bigsqcup_{\omega \in \Omega^{n-1}} \bigsqcup_{\mu \in \text{AR}_n}\bigsqcup_{T \in \text{AP}_{n-1}}
	\text{GT}(m_1(\mu,T,\omega_1),m_2(\mu,T,\omega_2),\ldots,m_{n-1}(\mu,T,\omega_{n-1}),x).
\end{align*}
For $1 \leq i \leq n$, we define $\omega^{(i)} \in \bs{\Omega^{n-1}}$ by $\omega^{(i)}_j = \begin{cases} 0 & j < i \\ 1 & j \geq i \end{cases}$.
Note that its sign as an element of $\Omega^{n-1}$ is $(-1)^{(n-i)}$.
Let $\Omega_1 = \left( \{ \omega_n, \omega_{n-2}, \ldots \},\, \{ \omega_{n-1}, \omega_{n-3}, \ldots \} \right)$, where $\bs{\Omega_1} = \{ \omega_1, \omega_2, \ldots, \omega_n\}$.
Furthermore, we define $\Omega_2$ by $\Omega_2^\pm = (\Omega^{n-1})^\pm \setminus \Omega_1^\pm$.
Then, using $\pi$ from Construction \ref{pi}, we have
\begin{align*}
	&\hspace{-4em}\bigsqcup_{\omega \in \Omega_1} \bigsqcup_{\mu \in \text{AR}_n}\bigsqcup_{T \in \text{AP}_{n-1}}
	\text{GT}(m_1(\mu,T,\omega_1),m_2(\mu,T,\omega_2),\ldots,m_{n-1}(\mu,T,\omega_{n-1}),x) \\
	\qquad=& \bigsqcup_{i=1}^n \bigsqcup_{\mu \in \text{AR}_n} \bigsqcup_{T \in \text{AP}_{n-1}}  (-1)^{(n-i)}
	\text{GT}(m_1(\mu,T,0),\ldots,m_{i-1}(\mu,T,0), \\
	&\hspace{240pt}m_i(\mu,T,1),\ldots,m_{n-1}(\mu,T,1),x) \\
	\qquad\overset{\pi}{\sijto}& \bigsqcup_{i=1}^n \bigsqcup_{\mu \in \text{AR}_n} \bigsqcup_{T \in \text{AP}_{n-1}} 
	\text{GT}(m_1(\mu,T,0),\ldots,m_{i-1}(\mu,T,0), x,m_i(\mu,T,1),\ldots,m_{n-1}(\mu,T,1)).
\end{align*}
It is therefore sufficient to construct
\[
	\bigsqcup_{\omega \in \Omega_2} \bigsqcup_{\mu \in \text{AR}_n}\bigsqcup_{T \in \text{AP}_{n-1}}
	\text{GT}(m_1(\mu,T,\omega_1),m_2(\mu,T,\omega_2),\ldots,m_{n-1}(\mu,T,\omega_{n-1}),x)
	\sijto (\emptyset,\emptyset).
\]
If we fix $\omega \insupp \Omega_2$ then, by the definition of $\Omega_2$, there is a minimum index $i \leq n-2$ such that $\omega_i = 1$ and $\omega_{i+1}=0$.
We define an involution on $\text{AR}_n \times \text{AP}_{n-1} \colon (\mu,T) \leftrightarrow (\mu',T')$ as follows:
\begin{itemize}
\item For $1 \leq j \leq i-1$, swap $t_{j,i}$ and $t_{j,i+1}$. Namely, $(t'_{j,i},t'_{j,i+1})=(t_{j,i+1},t_{j,i})$.
\item For $i+2 \leq j \leq n$, swap $t_{i,j}$ and $t_{i+1,j}$. Namely, $(t'_{i,j},t'_{i+1,j})=(t_{i+1,j},t_{i,j})$.
\item Let $t'_{i,i+1}=r(\mu_{i+1})$ and $\mu'_{i+1}=r(t_{i,i+1})$.
\item Other elements are left unchanged.
\end{itemize}
For example, when $n=5$ and $i=2$ the involution can be illustrated as
\[
\begin{tikzpicture}
\def\t{0.8}
\node (1) at (0,0) {$\mu_1$};
\node (2) at (2,0) {$\mu_2$};
\node (3) at (4,0) {$\mu_3$};
\node (4) at (6,0) {$\mu_4$};
\node (5) at (8,0) {$\mu_5$};
\foreach \i in {1,2,3,4}
	\tikzmath{
		integer \temp;
		\temp = \i+1;
	}
	\foreach \j in {\temp,...,5} 
		\tikzmath{ \x = \i+\j-1; \y = \t*(\j-\i-1)+1; }
		\node (\i\j) at (\x,\y) {$t_{\i,\j}$};
\draw[<->] (12)--(13);
\draw[<->] (24)--(34);
\draw[<->] (25)--(35);
\draw[<->] (3)--(23);
\end{tikzpicture}.
\]

Then, we have 
\begin{align*}
m_i(\mu,T,1) = k_{i+1}-\delta_\nwarrow(\mu_{i+1})+c_i(T) = k_{i+1}+\delta_\nearrow(\mu'_{i+1})+c_{i+1}(T') = m_{i+1}(\mu',T',0), \\
m_{i+1}(\mu,T,0) = k_{i+1}+\delta_\nearrow(\mu_{i+1})+c_{i+1}(T) = k_{i+1}-\delta_\nwarrow(\mu'_{i+1})+c_{i}(T') = m_{i}(\mu',T',1),
\end{align*}
and since the involution acts on $(\mu,T)$ as permuting elements it preserves the sign.
Therefore, from Construction \ref{pi}, we have
\begin{multline*}
	\text{GT}(m_1(\mu,T,\omega_1),m_2(\mu,T,\omega_2),\ldots,m_{n-1}(\mu,T,\omega_{n-1}),x) \\
	\sijto -\text{GT}(m_1(\mu',T',\omega_1),m_2(\mu',T',\omega_2),\ldots,m_{n-1}(\mu',T',\omega_{n-1}),x).
\end{multline*}
This completes the construction. \qed
\end{construction}

\subsection{
A more natural sijection $\text{GMT}(\mathbf{k}) \sijto \text{SGT}(\mathbf{k})$}\label{ASM-SGT::MR}
In this section we construct $\Phi_4'$ to complete the construction of the sijection between $\text{GMT}(\mathbf{k})$ and $\text{SGT}(\mathbf{k})$.
After that, we prove its compatibility with the top and the inversion number statistics.

\begin{construction} \label{Phi4}
We shall construct $\Phi_4'$.
First, we construct a bijection $\bs{\text{AR}_n} \times \bs{\text{AP}_{n-1}} \to \bs{\text{AR}_1} \times \bs{\text{AP}_n} ; (\mu,T) \mapsto (\mu',T')$ as follows (cf. \cite[Problem. 8]{FK1}):
\begin{gather*}
t'_{p,q} = \begin{cases} t_{p,q} & p<q<i \\ t_{p,q-1} & p<i<q \\ t_{p-1,q-1} & i<p<q \\ r(\mu_p) & p<i=q \\ r(\mu_q) & p=i<q \end{cases} \\
\mu'_1 = \mu_i.
\end{gather*}
For example, when $n=5$ and $i=3$ we have
\begin{align*}
	\mu'=(\mu_3),&& T' = \quad
\begin{tikzpicture}[baseline=30]
\node (12) at (0,0) {$t_{1,2}$};
\node (13) at (1,0.8) {$r(\mu_1)$};
\node (14) at (2,1.6) {$t_{1,3}$};
\node (15) at (3,2.4) {$t_{1,4}$};
\node (23) at (2,0) {$r(\mu_2)$};
\node (24) at (3,0.8) {$t_{2,3}$};
\node (25) at (4,1.6) {$t_{2,4}$};
\node (34) at (4,0) {$r(\mu_4)$};
\node (35) at (5,0.8) {$r(\mu_5)$};
\node (45) at (6,0) {$t_{3,4}$};
\end{tikzpicture}.
\end{align*}

Since this bijection only rearranges and reverses arrows, it preserves the sign. Therefore, it can be recognized as a sijection $\text{AR}_n \times \text{AP}_{n-1} \sijto \text{AR}_1 \times \text{AP}_n$.
In addition we have a sijection $\varphi_{\text{AR}_1} \colon \text{AR}_1 \sijto (\{\cdot\},\emptyset)$ defined by
\[
\begin{tikzpicture}[main/.style = {draw, circle}]
\node (S) at (0,3) {$\text{AR}_1$};
\node (T) at (3,3) {$(\{\cdot\},\emptyset)$};
\node (phi) at (-1.9,3) {$\varphi_{\text{AR}_1} \colon$};
\node (sij) at (1.5,3) {$\sijto$};
\node (+) at (-2,1.5) {$+$};
\node (-) at (-2,0) {$-$};
\draw (-2.5,-0.5) to (3.5,-0.5);
\draw (-2.5,0.5) to (3.5,0.5);
\draw (-2.5,2.5) to (3.5,2.5);
\node[scale=0.84,main,minimum size=.84cm] (1) at (3,2) {$\cdot$};
\node[scale=0.84,main] (2) at (0,1) {$\nearrow$};
\node[scale=0.84,main] (3) at (0,0) {$\nxarrow$};
\node[scale=0.84,main] (4) at (0,2) {$\nwarrow$};
\draw[-] (1) -- (4);
\draw[-] (2) to [out=0,in=0] (3);
\end{tikzpicture},
\]
 so we get a sijection $\Psi_{n,i} \colon \text{AR}_n \times \text{AP}_{n-1} \sijto \text{AP}_n$ such that
\begin{align*}
	m_j(\mu,T,0) &= k_j+c_j(\Psi_{n,i}(\mu,T)) &&\hspace{-80pt} \text{ if } j<i, \\
	m_j(\mu,T,1) &= k_{j+1}+c_{j+1}(\Psi_{n,i}(\mu,T)) &&\hspace{-80pt} \text{ if } j \geq i.
\end{align*}
By applying this result and Definition \ref{def_sij_DIS}, we have
\begin{multline*}
	\bigsqcup_{i=1}^n \bigsqcup_{\mu \in \text{AR}_n} \bigsqcup_{T \in \text{AP}_{n-1}} 
	\text{GT}(m_1(\mu,T,0),\ldots,m_{i-1}(\mu,T,0), x,m_i(\mu,T,1),\ldots,m_{n-1}(\mu,T,1)) \\
	\sijto \bigsqcup_{i=1}^n \bigsqcup_{T \in \text{AP}_{n}} \text{GT}(k_1+c_1(T),\ldots,k_{i-1}+c_{i-1}(T),x,k_{i+1}+c_{i+1}(T),\ldots,k_{n}+c_{n}(T)).
\end{multline*}
Then, using Construction \ref{du_ord} and $\tau$ in Construction \ref{tau}, we have
\begin{multline*}
	\bigsqcup_{i=1}^n \bigsqcup_{T \in \text{AP}_{n}} \text{GT}(k_1+c_1(T),\ldots,k_{i-1}+c_{i-1}(T),x,k_{i+1}+c_{i+1}(T),\ldots,k_{n}+c_{n}(T)) \\
	= \bigsqcup_{T \in \text{AP}_{n}} \bigsqcup_{i=1}^n \text{GT}(k_1+c_1(T),\ldots,k_{i-1}+c_{i-1}(T),x,k_{i+1}+c_{i+1}(T),\ldots,k_{n}+c_{n}(T)) \\
	\sijto \bigsqcup_{T \in \text{AP}_{n}} \text{GT}(k_1+c_1(T),k_2+c_2(T),\ldots,k_{n}+c_{n}(T)) = \text{SGT}(\mathbf{k}),
\end{multline*}
which completes the construction. \qed
\end{construction}

Finally, we obtain the desired sijection.
\begin{construction}\label{Gamma}
We construct a sijection $\Gamma \colon \text{GMT}(\mathbf{k}) \sijto \text{SGT}(\mathbf{k})$ for any $\mathbf{k} \in \mathbb{Z}^n$, $n \in \mathbb{Z}$.
When $n=1$, $\text{GMT}(\mathbf{k})=\text{AR}_1$ and $\text{SGT}(\mathbf{k}) = (\{k_1\},\emptyset)$.
Therefore, the construction is given as:
\[
\begin{tikzpicture}[main/.style = {draw, circle}]
\node (S) at (0,3) {$\text{GMT}(\mathbf{k})$};
\node (T) at (3,3) {$\text{SGT}(\mathbf{k})$};
\node (phi) at (-1.9,3) {$\varphi \colon$};
\node (sij) at (1.5,3) {$\sijto$};
\node (+) at (-2,1.5) {$+$};
\node (-) at (-2,0) {$-$};
\draw (-2.5,-0.5) to (3.5,-0.5);
\draw (-2.5,0.5) to (3.5,0.5);
\draw (-2.5,2.5) to (3.5,2.5);
\node[scale=0.84,main] (1) at (3,2) {$k_1$};
\node[scale=0.84,main] (2) at (0,1) {$\nearrow$};
\node[scale=0.84,main] (3) at (0,0) {$\nxarrow$};
\node[scale=0.84,main] (4) at (0,2) {$\nwarrow$};
\draw[-] (1) -- (4);
\draw[-] (2) to [out=0,in=0] (3);
\end{tikzpicture}.
\]
When $n>1$, the construction is given by induction on $n$:
\[
	\textrm{GMT}(\mathbf{k})
	= \bigsqcup_{\mu \in \textrm{AR}_n} \bigsqcup_{\mathbf{l} \in \mu(\mathbf{k})} \textrm{GMT}(\mathbf{l})
	\sijto \bigsqcup_{\mu \in \textrm{AR}_n} \bigsqcup_{\mathbf{l} \in \mu(\mathbf{k})} \textrm{SGT}(\mathbf{l})
	\overset{\Phi'}{\sijto} \textrm{SGT}(\mathbf{k}),
\]
where $\Phi' = \Phi_4' \circ \Phi_3' \circ \Phi_1$. \qed
\end{construction}

Last, we check the compatibility of the sijection $\Gamma$.
\begin{proposition} \label{prop::MR-SGT}
The sijection $\Gamma$ in Construction \ref{gamma} is compatible with the two statistics $\eta_\text{top}$ and $\eta_\text{inv}$.
In particular, it is compatible with the statistic $(\eta_\text{top},\eta_\text{inv})$.
\end{proposition}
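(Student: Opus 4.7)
The plan is to induct on $n$, following the explicit decomposition $\Gamma = \Phi_4' \circ \Phi_3' \circ \Phi_1 \circ \left(\bigsqcup_{\mu} \bigsqcup_{\mathbf{l}} \Gamma_{\mathbf{l}}\right)$ of Construction \ref{Gamma}. I first extend $\eta_\text{top}$ and $\eta_\text{inv}$ to every intermediate signed set in the natural way: $\eta_\text{top}$ is the apex entry of whichever Gelfand--Tsetlin component an element carries, and $\eta_\text{inv}$ is the sum $|\mu|_{\nearrow,\nxarrow} + |T|_{\swarrow,\sxarrow}$ over the outer arrow row $\mu$ and the currently occurring arrow pattern $T$ (either of which may be absent). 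By Lemma \ref{lem:comp_comp} combined with the disjoint-union and Cartesian-product preservation results (Lemmas \ref{lemofsqcup}, \ref{lemoftimes} and the remark following Definition \ref{def_sij_DIS}), compatibility of $\Gamma$ reduces to compatibility of each link separately.

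The base case $n=1$ is an inspection: the four-element sijection pairs $\nwarrow \leftrightarrow k_1$ and $\nearrow \leftrightarrow \nxarrow$, and each pair matches on both statistics (apex value $k_1$ on both sides of the first pair; inversion contribution $0$ for $\nwarrow/k_1$ and $1$ for $\nearrow/\nxarrow$). In the inductive step, the identity rewriting $\text{GMT}(\mathbf{k}) = \bigsqcup_\mu \bigsqcup_\mathbf{l} \text{GMT}(\mathbf{l})$ is trivially compatible, and the disjoint union of inner $\Gamma_\mathbf{l}$'s is compatible by the inductive hypothesis. The sijection $\Phi_1$ is built from $\rho$, already known to preserve $\eta_\text{top}$ (Proposition \ref{prop::rho}), together with purely formal reorderings of disjoint unions that leave $\mu$ and $T$ fixed, so both statistics pass through. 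The sijection $\Phi_3'$ combines $\pi$ (compatible with $\eta_\text{top}$ by Proposition \ref{prop::pi}) with the explicit involution on $\bs{\text{AR}_n} \times \bs{\text{AP}_{n-1}}$ that annihilates the $\Omega_2$-part; the swaps it performs either lie entirely within $T$ or exchange $\mu_{i+1}$ with $t_{i,i+1}$ via the reversal $r$, and since $r$ bijectively interchanges $\{\nearrow,\nxarrow\}$ with $\{\swarrow,\sxarrow\}$, the combined inversion count is conserved.

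The heart of the argument is the analysis of $\Phi_4'$. Its key ingredient is the bijection $\Psi_{n,i} \colon \text{AR}_n \times \text{AP}_{n-1} \sijto \text{AR}_1 \times \text{AP}_n$ of Construction \ref{Phi4}, which rewrites each $\mu_p$ with $p \ne i$ as $r(\mu_p)$ on the $i$-th diagonals of $T'$ while sending $\mu_i$ to $\mu'_1$. Because $r$ interchanges $\{\nearrow,\nxarrow\}$ with $\{\swarrow,\sxarrow\}$ bijectively, this yields the key identity
\[
|\mu|_{\nearrow,\nxarrow} + |T|_{\swarrow,\sxarrow} = |\mu'|_{\nearrow,\nxarrow} + |T'|_{\swarrow,\sxarrow}.
\]
The cancellation $\varphi_{\text{AR}_1}$ pairs $\nwarrow$ with the marker $\cdot$ (both contributing $0$) and $\nearrow$ with $\nxarrow$ (both contributing $1$), hence is statistic-preserving; and $\tau$ is compatible with $\eta_\text{top}$ by Proposition \ref{prop::tau} (since the bottom rows have length $n \geq 2$) while leaving $T'$ entirely untouched, so the $\swarrow/\sxarrow$ count of $T'$ passes unchanged to the final $\text{SGT}(\mathbf{k})$.

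I expect the subtlest step to be the bookkeeping for $\Psi_{n,i}$: one must verify that the ``missing'' entries $\mu_1,\ldots,\widehat{\mu_i},\ldots,\mu_n$ of the domain correspond under $r$ to exactly the ``new'' entries on the $i$-th column and row of $T'$ inside the enlarged arrow pattern, while $\mu_i$ itself is absorbed cleanly into $\mu'_1$ and then cancels through $\varphi_{\text{AR}_1}$ in an inversion-respecting manner. Once this accounting is settled, the compatibility of every link follows directly, and composition via Lemma \ref{lem:comp_comp} yields the joint compatibility of $\Gamma$ with $(\eta_\text{top}, \eta_\text{inv})$.
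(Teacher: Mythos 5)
Your proposal is correct and follows essentially the same route as the paper's proof: extend $\eta_\text{top}$ and $\eta_\text{inv}$ to the intermediate signed sets, reduce to the links $\Phi_1$, $\Phi_3'$, $\Phi_4'$ via Lemma \ref{lem:comp_comp} and induction, use the compatibility of $\rho$, $\pi$, $\tau$ with $\eta_\text{top}$ (Propositions \ref{prop::rho}--\ref{prop::tau}), and observe that every step acts on arrows only by permuting and reversing (with $r$ exchanging $\{\nearrow,\nxarrow\}$ and $\{\swarrow,\sxarrow\}$, and $\varphi_{\text{AR}_1}$ pairing equal inversion contributions). Your more explicit bookkeeping for $\Psi_{n,i}$ is a harmless elaboration of what the paper states in one line.
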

\begin{proof}
When $n=1$, this is trivial. In the following, we assume that $n>1$.
First, we define the statistics for the intermediates. For $(T,\mathbf{l},\mu) \insupp \bigsqcup_{\mu \in \textrm{AR}_n} \bigsqcup_{\mathbf{l} \in \mu(\mathbf{k})} \textrm{GMT}(\mathbf{l})$,
we define
\begin{align*}
	\eta_\text{top}((T,\mathbf{l},\mu)) &= \eta_\text{top}(T), \\
	\eta_\text{inv}((T,\mathbf{l},\mu)) &= \eta_\text{inv}(T) + \eta_\text{inv}(\mu),
\end{align*}
and for $(T,\mathbf{l},\mu) \insupp \bigsqcup_{\mu \in \textrm{AR}_n} \bigsqcup_{\mathbf{l} \in \mu(\mathbf{k})} \textrm{SGT}(\mathbf{l})$, we define
\begin{align*}
	\eta_\text{top}((T,\mathbf{l},\mu)) &= \eta_\text{top}(T), \\
	\eta_\text{inv}((T,\mathbf{l},\mu)) &= \eta_\text{inv}(T) + \eta_\text{inv}(\mu).
\end{align*}
The trivial sijection $\textrm{GMT}(\mathbf{k})
	= \bigsqcup_{\mu \in \textrm{AR}_n} \bigsqcup_{\mathbf{l} \in \mu(\mathbf{k})} \textrm{GMT}(\mathbf{l})$
is compatible with these statistics and the compatibility of the second sijection follows from the cases for smaller $n$.
Therefore, it is sufficient to check the compatibility of $\Phi'$.
\begin{itemize}
\item $\Phi_1$: The compatibility with $\eta_\text{top}$ follows from the compatibility of $\rho$ (Proposition \ref{prop::rho}).
The sijection does not act on arrows, and hence it is compatible with inversion numbers.
\item $\Phi_3'$: The compatibility with $\eta_\text{top}$ follows from the compatibility of $\pi$ (Proposition \ref{prop::pi}).
The sijection acts on arrows as reversing and permuting, and hence it is compatible with inversion numbers.
\item $\Phi_4'$: The compatibility with $\eta_\text{top}$ follows from the compatibility of $\text{id}_\text{GT}$ and $\tau$ (Proposition \ref{prop::tau}).
The sijection acts on arrows as a permutation except for the actions of $\varphi_{\text{AR}_1}$. Since the actions of $\varphi_{\text{AR}_1}$ do not affect the inversion number, the sijection is compatible with it.
\end{itemize}
According to Lemma \ref{lem:comp_comp}, the compatibility of $\Phi' = \Phi_4' \circ \Phi_3' \circ \Phi_1$ follows from the above.
\end{proof}

\begin{remark}
One of the advantages of proving refined enumerations with compatibility appears in the way one proves doubly-refined enumerations.
If a sijection is compatible with two statistics $\eta_A$ and $\eta_B$ then it is also compatible with the pair statistic $(\eta_A,\eta_B)$.
In general, however, the result of doubly-refined enumeration with respect to $(\eta_A,\eta_B)$ does not follow immidiately from the results of refined enumeration with respect to each of $\eta_A$ and $\eta_B$ separately.
\end{remark}

\subsection{About the choice of the parameter $x$ in the construction}
Fischer and Konvalinka's construction as well as ours depends on a parameter $x$ which, in fact, can be made to depend on the other parameters in each sijection that appears in the construction, by induction, notably on $\mathbf{k}$.
Here we show that there exists a choice of $x$ that has particularly nice properties at the level of the sijections.
We discuss how to choose the parameter in this subsection.
In particular we prove the following proposition.
\begin{proposition}\label{result::limit}
Let $\mathbf{k} \in \mathbb{Z}^n$. Then, there exist $X_+,X_- \in \mathbb{Z}$ such that
\begin{align*}
	\forall x \geq X_+,\,\Gamma_{\mathbf{k},x} = \Gamma_{\mathbf{k},X_+}, \\
	\forall x \leq X_-,\,\Gamma_{\mathbf{k},x} = \Gamma_{\mathbf{k},X_-},
\end{align*}
where $\Gamma_{\mathbf{k},x} \colon \text{GMT}(\mathbf{k}) \sijto \text{SGT}(\mathbf{k})$ is the sijection constructed in Construction \ref{gamma}.
\end{proposition}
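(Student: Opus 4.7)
The plan is to proceed by induction on the length $n$ of $\mathbf{k}$. In the base case $n = 1$, Construction \ref{Gamma} defines $\Gamma_{\mathbf{k},x}$ with no reference to $x$ at all, so the claim is trivial with any $X_{+}$ and $X_{-}$.

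For the inductive step I would write $\Gamma_{\mathbf{k},x}$ using its recursive definition from Construction \ref{Gamma},
\[
	\Gamma_{\mathbf{k},x} \;=\; \Phi'_{\mathbf{k},x} \circ \Bigl( \bigsqcup_{\mu \in \text{AR}_n} \bigsqcup_{\mathbf{l} \in \mu(\mathbf{k})} \Gamma_{\mathbf{l}, x'_{\mathbf{l}}} \Bigr),
\]
where the $x'_{\mathbf{l}}$ are the auxiliary parameters used inside each recursive call. Since each such $\mathbf{l}$ has length $n-1$, the inductive hypothesis furnishes integers $X_{+}^{(\mathbf{l})}$ and $X_{-}^{(\mathbf{l})}$ beyond which the inner sijections stabilize; fix each $x'_{\mathbf{l}}$ to be, say, $X_{+}^{(\mathbf{l})}$, so the inner piece becomes entirely $x$-independent. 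The remaining $x$-dependence is then concentrated in $\Phi'_{\mathbf{k},x} = \Phi_4'_{\mathbf{k},x} \circ \Phi_3'_{\mathbf{k},x} \circ \Phi_{1,\mathbf{k},x}$ from Constructions \ref{Phi1}, \ref{Phi3} and \ref{Phi4}.

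Next I would analyze $\Phi_1$, $\Phi_3'$ and $\Phi_4'$ individually. Each is built from the sijections $\rho$, $\pi$ (via $\sigma$) and $\tau$ (via $\gamma$) of Section \ref{secofGT}; unwinding their recursive definitions, each one ultimately reduces to a finite combination of instances of the elementary sijection $\sint{a}{b} \sijto \sint{a}{c} \sqcup \sint{c}{b}$ of Example \ref{ex_sij}, combined through the trivial operations of opposite, disjoint union, Cartesian product and identity. Crucially, that elementary sijection depends on its three parameters only through their relative linear order. Consequently, the trajectory of any single element of $\bs{\text{GMT}(\mathbf{k})} \sqcup \bs{\text{SGT}(\mathbf{k})}$ through the composition $\Phi'_{\mathbf{k},x}$ is governed by a finite decision tree whose tests are all comparisons of $x$ with members of a finite set $C(\mathbf{k}) \subset \mathbb{Z}$ of values that do not themselves depend on $x$ (the endpoints of the signed intervals arising in the construction, together with the now-fixed $x'_{\mathbf{l}}$).

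Taking $X_{+} > \max C(\mathbf{k})$, and symmetrically $X_{-} < \min C(\mathbf{k})$, then forces every comparison involving $x$ to be resolved uniformly for $x \geq X_{+}$ (respectively $x \leq X_{-}$), so the element-by-element matching defined by $\Gamma_{\mathbf{k},x}$ between the fixed signed sets $\text{GMT}(\mathbf{k})$ and $\text{SGT}(\mathbf{k})$ is independent of $x$ on each of the two half-lines. The main obstacle I expect is verifying that $C(\mathbf{k})$ really is a finite, $x$-independent set, i.e.\ that no ``new'' $x$-dependent threshold sneaks in at any level of the recursion. This will require a careful but essentially mechanical inspection of the elementary steps in Constructions \ref{rho}, \ref{pi}, \ref{tau}, \ref{Phi1}, \ref{Phi3}, \ref{Phi4}, confirming at each step that the only way $x$ enters is through comparisons of the above form.
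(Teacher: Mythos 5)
Your overall strategy --- trace the recursive constructions and show that for $x$ beyond a finite threshold nothing changes --- is the same as the paper's, and your base case and treatment of the recursive calls via the inductive hypothesis match what the paper does implicitly. The difference is in how the stabilization of $\Phi' = \Phi_4' \circ \Phi_3' \circ \Phi_1$ is made rigorous. The paper introduces an inclusion relation on sijections ($\varphi_1 \subset \varphi_2$ meaning the domains and codomains are included and $\varphi_2$ restricts to $\varphi_1$ on $\bs{S_1 \sqcup T_1}$), proves that inclusion is preserved under composition, disjoint union, Cartesian product and indexed disjoint union, shows that the elementary sijection $\sint{a}{b} \sijto \sint{a}{x} \sqcup \sint{x}{b}$ forms an increasing chain in $x \geq \max(a,b)$, and then propagates the chain property through $\beta, \rho, \pi, \sigma, \gamma, \tau, \Phi_1, \Phi_3', \Phi_4'$ to $\Gamma$; since the domain and codomain of $\Gamma$ do not depend on $x$, a chain with fixed endpoints is constant.

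The gap in your version is that you have located the difficulty in the wrong place. The issue is not whether the comparison thresholds $C(\mathbf{k})$ are finite and $x$-independent; it is that the \emph{intermediate signed sets} in the composition genuinely grow with $x$ (e.g. the codomain $\sint{a}{x} \sqcup \sint{x}{b}$ of the elementary sijection acquires new elements as $x$ increases). A trajectory argument therefore has to rule out that an element of the fixed sets $\text{GMT}(\mathbf{k})$ or $\text{SGT}(\mathbf{k})$ ever bounces into one of these newly created portions, since the matching there is new and could in principle redirect the trajectory. ``All comparisons of $x$ with $C(\mathbf{k})$ resolve the same way'' does not by itself exclude this. What does exclude it is precisely the inclusion property: for the elementary sijection the new elements $(\{x\},\{x\})$ are matched with each other and the restriction to the old support is unchanged, and the closure lemmas guarantee this persists under all the combining operations (including the Cartesian product, whose case-by-case definition makes this nontrivial). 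So your ``careful but essentially mechanical inspection'' should be checking stability of restrictions, not finiteness of a comparison set; with that reformulation your argument becomes the paper's.
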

According to the proposition, we define $\Gamma_{\mathbf{k},\pm\infty} = \Gamma_{\mathbf{k},X_\pm}$. Note that $X_\pm$ depend on $\mathbf{k}$.
Let $f_t \colon \mathbb{Z} \to \mathbb{Z}; k \mapsto k+1$ be a translation.
The definition of $f_t$ is extended to $\mathbf{Z}^n$, $\text{GMT}$ and $\text{SGT}$ in a natural way.
The choice of parameters in $\Gamma_{\mathbf{k},\pm\infty}$ is good in the sense that it satisfies
\[
f_t \circ \Gamma_{\mathbf{k},\pm\infty} = \Gamma_{f_t(\mathbf{k}),\pm\infty} \circ f_t.
\]

The rest of this subsection is devoted to prove the proposition and the above relation.
First we define inclusion relations of signed sets and sijections.
Let $S$ and $T$ be signed sets. We say $S$ is included in $T$ and denote it by $S \subset T$ if $S^+ \subset T^+$ and $S^- \subset T^-$ hold.
Let $\varphi_i \colon S_i \sijto T_i$ ($i=1,2$) be sijections. We say $\varphi_1$ is included in $\varphi_2$ and denote it by $\varphi_1 \subset \varphi_2$ if all of the following conditions are met:
\begin{align*}
	S_1 \subset S_2,&& T_1 \subset T_2,&& \forall s \in \bs{S_1 \sqcup T_1},\, \varphi_1(s) = \varphi_2(s).
\end{align*}
We also define the difference of signed sets and sijections as follows:
\begin{itemize}
\item When $S \subset T$, the signed set $T \setminus S$ is $(T^+ \setminus S^+, T^- \setminus S^-)$.
\item When $\varphi_i \colon S_i \sijto T_i$ ($i=1,2$) meet $\varphi_1 \subset \varphi_2$, the sijection $\varphi_2 \setminus \varphi_1 \colon S_2 \setminus S_1 \sijto T_2 \setminus T_1$ is a restriction of $\varphi_2$.
\end{itemize}
Then, we have $S \sqcup (T \setminus S) = T$ and $\varphi_2 = \varphi_1 \sqcup (\varphi_2 \setminus \varphi_1)$.
The converse is also true. Namely, we have $S \subset S \sqcup T$ and $(S \sqcup T) \setminus S = T$ for any signed sets $S$ and $T$ and 
$\varphi \subset \varphi \sqcup \psi$ and $(\varphi \sqcup \psi) \setminus \varphi = \psi$ for any sijections $\varphi$ and $\psi$.
In particular, when $\varphi_i \colon S_i \sijto T_i$, $\psi_i \colon T_i \sijto U_i$, ($i=1,2$) satisfy $\varphi_1 \subset \varphi_2,\,\psi_1 \subset \psi_2$, we have
\[
	\psi_2 \circ \varphi_2 = (\psi_1 \sqcup (\psi_2 \setminus \psi_1)) \circ (\varphi_1 \sqcup (\varphi_2 \setminus \varphi_1)) = (\psi_1 \circ \varphi_1) \sqcup ((\psi_2 \setminus \psi_1) \circ (\varphi_2 \setminus \varphi_1)).
\]
Therefore, we have $\psi_1 \circ \varphi_1 \subset \psi_2 \circ \varphi_2$.
Particularly, if $S_1 = S_2$ and $T_1 = T_2$, then $\psi_1 \circ \varphi_1 = \psi_2 \circ \varphi_2$.

Let $\{\varphi_x\}_{x \in \mathbb{Z}}$ be a sequence of sijections.
We say $\{\varphi_x\}_{x \in \mathbb{Z}}$ makes a \textit{chain} in $x \geq X$, when
\[
	\varphi_X \subset \varphi_{X+1} \subset \varphi_{X+2} \subset \cdots.
\]
Similarly, we say $\{\varphi_x\}_{x \in \mathbb{Z}}$ makes a chain in $x \leq X$, when
\[
	\varphi_X \subset \varphi_{X-1} \subset \varphi_{X-2} \subset \cdots.
\]
\begin{proposition}\label{prop::alpha}
Let $a$, $b$, $x$ be integers and $\varphi_x \colon \sint{a}{b} \sijto \sint{a}{x} \sqcup \sint{x}{b}$ the sijection constructed in Example \ref{ex_sij}.
Then, $\{ \varphi_x\}_{x \in \mathbb{Z}}$ makes a chain in $x \geq \max(a,b)$ and in $x \leq \min(a,b)$.
\end{proposition}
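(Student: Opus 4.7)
The plan is to make the sijection $\varphi_x$ fully explicit when $x$ lies outside the interval spanned by $a$ and $b$, and then read the chain property off that explicit form. I will treat $x \geq \max(a,b)$ in detail; the symmetric case $x \leq \min(a,b)$ follows either by repeating the argument with signs reversed or by invoking the identity $\sint{p}{q} = -\sint{q}{p}$.

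First I would specialize to $a \leq b$ (the case $a > b$ is formally identical after swapping signs) and assume $x \geq b$. The only nontrivial parts of the relevant signed sets are then $\sint{a}{b}^+ = \{a,\ldots,b-1\}$, $\sint{a}{x}^+ = \{a,\ldots,x-1\}$, and $\sint{x}{b}^- = \{b,\ldots,x-1\}$. Unwinding the construction of Example \ref{ex_sij} in this regime, the sijection $\varphi_x$ is the ``identity on integer labels'': it pairs $y \in \sint{a}{b}^+$ with $y \in \sint{a}{x}^+$ for $a \leq y < b$, and pairs $y \in \sint{a}{x}^+$ with $y \in \sint{x}{b}^-$ for $b \leq y < x$.

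Next I would verify $\varphi_x \subset \varphi_{x+1}$ directly from this description. Domains coincide; the codomain inclusion $\sint{a}{x} \sqcup \sint{x}{b} \subset \sint{a}{x+1} \sqcup \sint{x+1}{b}$ reduces summand by summand to $\sint{a}{x} \subset \sint{a}{x+1}$ (adjoining $x$ to the plus part) and $\sint{x}{b} \subset \sint{x+1}{b}$ (adjoining $x$ to the minus part). The explicit form of $\varphi_x$ shows that $\varphi_{x+1}$ agrees with $\varphi_x$ on the common support and introduces exactly one new pair, namely $x \leftrightarrow x$ between the newly-added label $x$ in $\sint{a}{x+1}^+$ and in $\sint{x+1}{b}^-$. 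This gives the chain on $x \geq \max(a,b)$.

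For the case $x \leq \min(a,b)$ I would simply transcribe the argument: now $\sint{a}{x}^-$ and $\sint{x}{b}^+$ are the nontrivial parts, the matching is again the identity on integer labels, and decrementing $x$ by one adjoins precisely the single new pair $x-1 \leftrightarrow x-1$. The boundary values $x = \max(a,b)$ and $x = \min(a,b)$, as well as $a = b$, are automatic since one of the codomain summands has empty support and $\varphi_x$ becomes an identity sijection. The only real obstacle is the uniformity of the description: Example \ref{ex_sij} is presented case by case, so one has to verify that in every configuration with $x$ outside $[\min(a,b), \max(a,b)]$ the construction collapses to the identity on integer labels. Once this observation is in place, the chain property is immediate.
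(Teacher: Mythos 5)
Your proof is correct and is essentially the paper's argument: the paper states directly that for $x \geq \max(a,b)$ one has $\varphi_{x+1} = \varphi_x \sqcup \psi_x$ with $\psi_x$ the unique sijection $(\emptyset,\emptyset) \sijto (\{x\},\{x\})$, which is exactly the single new pair $x \leftrightarrow x$ you identify after unwinding Example \ref{ex_sij}. You simply spell out in more detail the explicit ``identity on labels'' form of $\varphi_x$ that the paper leaves implicit.
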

\begin{proof}
For $x \geq \max(a,b)$, we have $\varphi_{x+1} = \varphi_x \sqcup \psi_x$, where $\psi_x$ is the unique sijection between $(\emptyset,\emptyset)$ and $(\{x\},\{x\})$.
Therefore, $\{ \varphi_x\}_{x \in \mathbb{Z}}$ makes a chain in $x \geq \max(a,b)$.
We can prove the case $x \leq \min(a,b)$ similarly.
\end{proof}
We prepare some lemmas to prove Proposition \ref{result::limit}.
\begin{lemma}\label{chain::lem::ids}
Let $S$ and $T$ be signed sets such that $S \subset T$. Then, we have $\text{id}_S \subset \text{id}_T$.
In addition, let $\varphi_S \colon S \sqcup -S \sijto (\emptyset,\emptyset)$ be the sijection induced by $\text{id}_S$ as in Construction \ref{sij_oppo}.
We define $\varphi_T$ similarly. Then, we have $\varphi_S \subset \varphi_T$.
\end{lemma}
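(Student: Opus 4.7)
The plan is to unwind the definition of inclusion of sijections and verify each of its three clauses (inclusion of sources, inclusion of targets, and agreement of the underlying involutions on the support) directly for each of the two claims. No calculation is needed: each clause will hold tautologically once the underlying signed-set constructions are made explicit.

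First I would dispose of the claim $\text{id}_S \subset \text{id}_T$. By definition, $\text{id}_S$ is the sijection $S \sijto S$ sending every element of $\bs{S} \sqcup \bs{S}$ to itself, and likewise for $\text{id}_T$. The source inclusion $S \subset T$ and the target inclusion $S \subset T$ are given by hypothesis. The agreement condition requires that $\text{id}_S(s) = \text{id}_T(s)$ for every $s \in \bs{S} \sqcup \bs{S}$, but both sides equal $s$. Hence $\text{id}_S \subset \text{id}_T$.

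For the second part I would recall the concrete construction of Example \ref{sij_oppo}: applied to $\text{id}_S$, it produces the involution $\varphi_S$ on $\bs{S \sqcup -S} = \bs{S} \sqcup \bs{S}$ that exchanges each element of $\bs{S}$ sitting in the $S$-copy with the same element sitting in the $-S$-copy. The source inclusion $S \sqcup -S \subset T \sqcup -T$ follows immediately from $S \subset T$, since taking the opposite is obviously monotone for $\subset$ and so is disjoint union. The target inclusion $(\emptyset,\emptyset) \subset (\emptyset,\emptyset)$ is trivial. For the agreement clause, take any $s \in \bs{S \sqcup -S}$; the element $\varphi_T(s)$ is by construction the twin copy of $s$ obtained by swapping between the $T$-copy and the $-T$-copy. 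Since $s$ belongs to $\bs{S}$, its twin also belongs to $\bs{S}$, so $\varphi_T(s)$ is an element of $\bs{S \sqcup -S}$ and coincides with $\varphi_S(s)$ by the very same description applied inside $S \sqcup -S$.

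The only place that requires a bit of care, and therefore the main (and mild) obstacle, is the bookkeeping of the four copies $S^+$, $S^-$, $(-S)^+=S^-$, $(-S)^-=S^+$ inside $\bs{S \sqcup -S}$, and the verification that the restriction of $\varphi_T$ to $\bs{S \sqcup -S}$ indeed lands back in $\bs{S \sqcup -S}$. This is automatic from the definition of the construction in Example \ref{sij_oppo}, because the involution is defined locally: it only uses the action of $\text{id}_T$ on elements of $\bs{S}$, which coincides with that of $\text{id}_S$ by the first half of the lemma. No further input is required.
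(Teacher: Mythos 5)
Your proof is correct and matches the paper, which simply states that the lemma follows immediately from the definitions; you have just spelled out the routine verification of the three clauses of sijection inclusion that the paper leaves implicit.
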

\begin{proof}
The proof follows immediately from the definitions.
\end{proof}
\begin{lemma}\label{lem::times}
Let $\varphi_i \colon S_i \sijto T_i$ be sijections for $i=1,2$ and $\psi \colon U \sijto V$ a sijection.
Then, we have
\[
	\varphi_1 \subset \varphi_2 \Rightarrow \psi \sqcup \varphi_1 \subset \psi \sqcup \varphi_2.
\]
and
\[
	\varphi_1 \subset \varphi_2 \Rightarrow \psi \times \varphi_1 \subset \psi \times \varphi_2.
\]
\end{lemma}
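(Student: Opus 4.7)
The plan is to unwind the definitions of $\subset$, $\sqcup$, and $\times$ in both settings and check, case by case, that the source and target inclusions hold and that the two sijections agree on the smaller support. None of the steps is subtle; the only care required is in managing the three-case definition of the Cartesian product of sijections.

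For the disjoint union statement, I would first observe that the inclusions of source and target signed sets are immediate: from $S_1 \subset S_2$ and $T_1 \subset T_2$ one gets $U \sqcup S_1 \subset U \sqcup S_2$ and $V \sqcup T_1 \subset V \sqcup T_2$ componentwise in the plus and minus parts. For the agreement condition, recall from the definition that $(\psi \sqcup \varphi_i)(s) = \psi(s)$ when $s \in \bs{U} \sqcup \bs{V}$ and $(\psi \sqcup \varphi_i)(s) = \varphi_i(s)$ when $s \in \bs{S_i} \sqcup \bs{T_i}$. On $\bs{U \sqcup V}$ both maps act as $\psi$, and on $\bs{S_1 \sqcup T_1}$ they act as $\varphi_1$ and $\varphi_2$ respectively, which coincide by hypothesis. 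This settles the first implication.

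For the Cartesian product statement, the inclusions $U \times S_1 \subset U \times S_2$ and $V \times T_1 \subset V \times T_2$ hold because the Cartesian product preserves inclusions in each factor (checked separately on the plus and minus parts, both of which are unions of products of plus/minus parts of the factors). For the agreement condition I would fix $(u,s) \in \bs{U \times S_1} = \bs{U} \times \bs{S_1}$ and go through the three cases defining $\psi \times \varphi_i$, keyed on whether $\psi(u) \in \bs{U}$ or $\psi(u) \in \bs{V}$, and in the latter case whether $\varphi_i(s) \in \bs{S_i}$ or $\varphi_i(s) \in \bs{T_i}$. Since $s \in \bs{S_1}$, the hypothesis $\varphi_1 \subset \varphi_2$ gives $\varphi_1(s) = \varphi_2(s)$, and this common value lies in $\bs{S_1} \subset \bs{S_2}$ or in $\bs{T_1} \subset \bs{T_2}$ depending on the case. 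Hence the case selection is the same for $\psi \times \varphi_1$ and $\psi \times \varphi_2$, and the outputs coincide. An entirely parallel argument for $(v,t) \in \bs{V \times T_1}$ finishes the verification.

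The one place where a slip could occur is confusing the case selection with the value: the definition branches on where $\varphi_i(s)$ lands, so one must check that ``$\varphi_1(s) \in \bs{S_1}$'' is equivalent to ``$\varphi_2(s) \in \bs{S_2}$'' under the assumption. This is true because $\varphi_2(s) = \varphi_1(s)$ on $\bs{S_1 \sqcup T_1}$ and the inclusions $\bs{S_1} \subset \bs{S_2}$, $\bs{T_1} \subset \bs{T_2}$ are disjoint from $\bs{T_2}$, $\bs{S_2}$ respectively (since signed sets are disjoint pairs and $S_i, T_i$ are disjoint). Once this compatibility of cases is noted, the rest of the argument is a direct substitution.
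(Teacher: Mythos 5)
Your proposal is correct and matches the paper's approach: the paper simply states that the lemma ``follows immediately from the definitions,'' and your case-by-case unwinding (including the careful check that the case selection in the Cartesian product definition is the same for $\varphi_1$ and $\varphi_2$) is exactly the verification being left implicit there.
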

\begin{proof}
The proof follows immediately from the definitions.
\end{proof}
When $\varphi_i \colon S_i \sijto T_i$ and $\psi_i \colon U_i \sijto V_i$ are sijections for $i=1,2$ such that $\varphi_1 \subset \varphi_2$ and $\psi_1 \subset \psi_2$, then we have from this lemma that
\[
	\psi_1 \times \varphi_1 \subset \psi_1 \times \varphi_2 \subset \psi_2 \times \varphi_2.
\]
Furthermore, for a disjoint union with signed index, the following lemma holds.
\begin{lemma}\label{lem::DSU}
Let $\psi_i \colon T_i \sijto \tilde{T}_i$ be sijections for $i \in \mathbb{Z}$ such that $\psi_1 \subset \psi_2 \subset \cdots$.
Also, for $S = \bigcup_{i=1}^{\infty} \left(T_i^+\right) \sqcup \bigcup_{i=1}^{\infty} \left(\tilde{T}_i^-\right)$,
let $\left\{\varphi_s\colon U_s \sijto V_s\right\}_{s \in S}$ be a family of sijections.
Then we have
\[
	\bigsqcup_{t \in T_1 \sqcup \tilde{T}_1} \varphi_t \subset \bigsqcup_{t \in T_2 \sqcup \tilde{T}_2} \varphi_t \subset \cdots.
\]
\end{lemma}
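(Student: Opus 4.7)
The plan is to unwind Definition \ref{def_sij_DIS} and the definitions of inclusion in turn, reducing each required condition to the corresponding condition already known for the chain $\psi_1 \subset \psi_2 \subset \cdots$. It therefore suffices to treat two consecutive indices $i$ and $i+1$ and show $\bigsqcup_{t \in T_i \sqcup \tilde{T}_i} \varphi_t \subset \bigsqcup_{t \in T_{i+1} \sqcup \tilde{T}_{i+1}} \varphi_t$; the full chain follows by iteration.

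First I would verify the inclusion of the domain and codomain signed sets. The domain of the $i$-th disjoint union is $\bigsqcup_{t \in T_i} U_t$, whose plus part is $\bigsqcup_{t \in T_i^+}(U_t^+) \sqcup \bigsqcup_{t \in T_i^-}(U_t^-)$ and minus part is $\bigsqcup_{t \in T_i^+}(U_t^-) \sqcup \bigsqcup_{t \in T_i^-}(U_t^+)$. Since $\psi_i \subset \psi_{i+1}$ implies $T_i^\pm \subset T_{i+1}^\pm$ by the definition of inclusion of sijections, each of these two parts is contained in the corresponding part of $\bigsqcup_{t \in T_{i+1}} U_t$. The same reasoning, applied to $\tilde{T}_i^\pm \subset \tilde{T}_{i+1}^\pm$, handles the codomain side.

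Second, I would check pointwise agreement on $\bs{\bigsqcup_{t \in T_i} U_t} \sqcup \bs{\bigsqcup_{t \in \tilde{T}_i} V_t}$. Any such element has the form $(s_t, t)$ with $t \in \bs{T_i \sqcup \tilde{T}_i}$, and the formula in Definition \ref{def_sij_DIS} returns either $(\varphi_t(s_t), t)$ or $(\varphi_t(s_t), \psi_i(t))$, depending only on where $\varphi_t(s_t)$ lands. Because $\psi_i \subset \psi_{i+1}$ means $\psi_i$ and $\psi_{i+1}$ agree on $\bs{T_i \sqcup \tilde{T}_i}$, and because the indexing family $\{\varphi_s\}_{s \in S}$ (together with the convention $\varphi_t = \varphi_{\psi_i(t)} = \varphi_{\psi_{i+1}(t)}$ for $t \in T_i^- \sqcup \tilde{T}_i^+$) does not depend on $i$, the image is identical to that produced by the $(i+1)$-th disjoint union.

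The statement is essentially a compatibility assertion between the construction of Definition \ref{def_sij_DIS} and the inclusion of sijections, and all the data used in that construction — the index involution $\psi$ and the parameterizing family $\{\varphi_s\}$ — either behave monotonically by hypothesis or are fixed independently of $i$. The only real care needed is to handle the plus and minus parts of signed sets separately, which the first step above does; beyond that, there is no substantive obstacle.
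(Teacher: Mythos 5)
Your proposal is correct and matches the paper, which simply states that the claim ``follows immediately from the definitions''; you have merely spelled out that unwinding (inclusion of the plus/minus parts of the indexed disjoint unions, plus pointwise agreement using that $\psi_i$ and $\psi_{i+1}$ agree on $\bs{T_i \sqcup \tilde{T}_i}$ and that the family $\{\varphi_s\}$ is independent of $i$).
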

\begin{proof}
The proof follows immediately from the definitions.
\end{proof}
\begin{proof}[Proof of Proposition \ref{result::limit}]
The following claims can be checked by tracing the corresponding construction.
\begin{enumerate}
\item The sijection $\beta$ in Subconstruction \ref{beta} makes a chain in $x \geq \max(a_n,b_n)$ and in $x \leq \min(a_n,b_n)$.
This follows from Proposition \ref{prop::alpha}, Lemma \ref{lem::times} and the induction.
\item The sijection $\rho$ in Construction \ref{rho} makes a chain in $x \geq \max(a_n,b_n)$ and in $x \leq \min(a_n,b_n)$.
This follows from the property of $\beta$ and Lemma \ref{lem::DSU}.
\item The sijection $\pi$ in Construction \ref{pi} makes a chain in $k_j \geq \max(k_1,k_2,\ldots,k_{j-1},k_{j+1},\ldots,k_n)$ and in $k_j \leq \min(k_1,k_2,\ldots,k_{j-1},k_{j+1},\ldots,k_n)$ for $j \in \{1,2,\ldots,n\}$.
Also, the sijection $\sigma$ in Construction \ref{pi} makes a chain in $a_j \geq b_j$, in $a_j \leq b_j$, in $b_j \geq a_j$ and in $b_j \leq a_j$ for $j \in \{1,2,\ldots,i-1,i+2,\ldots,n\}$.
Furthermore, let $\mathbf{a}' = (a_1,a_2,\ldots,a_{i-1},x,x,a_{i+2},\ldots,a_n)$ and $\mathbf{b}' = (b_1,b_2,\ldots,b_{i-1},x,x,b_{i+2},\ldots,b_n)$.
Then, $\sigma_{\mathbf{a}',\mathbf{b},x}$ makes a chain in $x \geq \max(b_i,b_{i+1})$ and in $x \leq \min(b_i,b_{i+1})$
and $\sigma_{\mathbf{a},\mathbf{b}',x}$ makes a chain in $x \geq \max(a_i,a_{i+1})$ and in $x \leq \min(a_i,a_{i+1})$.
These properties of $\sigma_n$ follow from Lemma \ref{chain::lem::ids} and the property of $\pi_n$ and the property of $\pi_n$ follows from those of $\sigma_{n-1}$.
Therefore the proof of these properties is given by induction.
\item The sijection $\gamma$ in Subconstruction \ref{gamma} makes a chain in $x \geq \max(\mathbf{k})$ and in $x \leq \min(\mathbf{k})$.
This follows from the lemmas.
\item The sijection $\tau$ in Subconstruction \ref{tau} makes a chain in $x \geq \max(\mathbf{k})$ and in $x \leq \min(\mathbf{k})$.
This follows from the properties of $\gamma$ and $\sigma$.
\item The sijection $\Phi_1$ in Construction \ref{Phi1} makes a chain in $x \geq \max(\mathbf{k})+n$ and in $x \leq \min(\mathbf{k})-n$.
This follows from the property $\rho$.
\item The sijection $\Phi_3'$ in Construction \ref{Phi3} makes a chain in $x \geq \max(\mathbf{k})+n$ and in $x \leq \min(\mathbf{k})-n$.
This follows from the property of $\pi$.
\item The sijection $\Phi_4'$ in Construction \ref{Phi4} makes a chain in $x \geq \max(\mathbf{k})+n$ and in $x \leq \min(\mathbf{k})-n$.
This follows from the Lemma \ref{lem::DSU} and the property of $\tau$.
\item The sijection $\Phi' = \Phi_4' \circ \Phi_3' \circ \Phi_1$ makes a chain in $x \geq \max(\mathbf{k})+n$ and in $x \leq \min(\mathbf{k})-n$.
This follows from the properties of $\Phi_1$, $\Phi_3'$ and $\Phi_4'$.
\item The sijection $\Gamma$ in Construction \ref{Gamma} makes a chain in $x \geq \max(\mathbf{k})+n$ and in $x \leq \min(\mathbf{k})-n$.
This follows from the property of $\Phi'$.
\end{enumerate}
Since the domain and codomain of $\Gamma$ ($\text{GMT}(\mathbf{k})$ and $\text{SGT}(\mathbf{k})$) are independent of the parameter $x$,
we have
\begin{align*}
\Gamma_{X_+} = \Gamma_{X_++1} = \Gamma_{X_++2} = \cdots,&& \text{where $X_+ = \max(\mathbf{k})+n$,} \\
\Gamma_{X_-} = \Gamma_{X_--1} = \Gamma_{X_--2} = \cdots,&& \text{where $X_- = \min(\mathbf{k})-n$.}
\end{align*}
\end{proof}

Last, we check the property of $\Phi_{\pm\infty}$ with respect to translations:
\[
f_t \circ \Gamma_{\mathbf{k},\pm\infty} = \Gamma_{f_t(\mathbf{k}),\pm\infty} \circ f_t.
\]
By the constructions, we have
\[
f_t \circ \Gamma_{\mathbf{k},x} = \Gamma_{f_t(\mathbf{k}),f_t(x)} \circ f_t.
\]
Considering a parameter $x \geq \max(X_+(\mathbf{k}),X_+(f_t(\mathbf{k})))$, we have
\[
f_t \circ \Gamma_{\mathbf{k},+\infty} = \Gamma_{f_t(\mathbf{k}),+\infty} \circ f_t.
\]
We can prove a similar result for $-\infty$.

\subsection{A natural sijection $\text{GMT}(\mathbf{k}) \sijto \text{AR}_n \times \text{SGT}(\mathbf{k})$} \label{ssec::gmt-ar_sgt}

In this subsection, we start with the weighted enumeration formula of a variant of monotone triangles, which is proved in \cite{AF}.
From this formula, we construct a sijection that is compatible with $n+3$ statistics between $\text{GMT}(\mathbf{k})$ and $\text{AR}_n \times \text{SGT}(\mathbf{k})$.
This serves as a good example of transforming a non-constructive proof for a weighted enumeration formula into a constructive one using the concept of compatibility.
First, to align with the terms of \cite{AF}, we will slightly modify the definitions of the combinatorial objects introduced so far.
\begin{itemize}
\item Let $\text{AR}_n=(\{\nwarrow,\nearrow,\nxarrow\},\emptyset)^n$.
The action of an element of $\text{AR}_n$ on $\mathbf{k} \in \mathbb{Z}^n$ is defined as before.
The definition of generalized monotone triangles is also modified accordingly.
\item Let $\mathrm{AP}_n=(\{\searrow,\swarrow,\sxarrow\},\emptyset)^{n-1} \times (\{\searrow,\swarrow,\sxarrow\},\emptyset)^{n-2} \times \cdots \times (\{\searrow,\swarrow,\sxarrow\},\emptyset)^{1}
$.
The action of an element of $\text{AP}_n$ on $\mathbf{k} \in \mathbb{Z}^n$ is defined as before.
The definition of Shifted Gelfand-Tsetlin patterns is also modified accordingly.
\end{itemize}
This modification just changes the signs of $\nxarrow$ and $\sxarrow$.
Therefore, except for the parts involving cancellations with $\nearrow\, \leftrightarrow \nxarrow$ and $\swarrow \leftrightarrow \sxarrow$, the constructions of the sijection described thus far remain unaffected.
In \cite{AF}, weighted enumerations of several variants of monotone triangles are provided.
When rewritten in accordance with the definitions and terminology of this paper, we obtain the following result for the weighted enumeration of generalized monotone triangles.
Here, the weighted enumeration of a signed set $S$ is defined as:
\[
	\sum_{s \in S^+} \text{weight}(s) + \sum_{s \in S^+} \text{weight}(s).
\]

\begin{theorem}[Theorem 2.2 and 3.2 in \cite{AF}] \label{formulae}
For $T = \left(\mathbf{k}^{(n)},\mu^{(n)},\mathbf{k}^{(n-1)},\mu^{(n-1)},\ldots,\mathbf{k}^{(1)},\mu^{(1)}\right) \insupp \textrm{GMT}(\mathbf{k})$, we define its weight as follows:
\begin{itemize}
\item Let $\eta_u(T)$, $\eta_v(T)$ and $\eta_w(T)$ be the number of $\nearrow$, $\nwarrow$ and $\nxarrow$ respectively in $\mu^{(1)},\mu^{(2)},\ldots,\mu^{(n)}$.
\item Let $\eta_{X_i}(T) = \sum \mathbf{k}^{(i)} - \sum \mathbf{k}^{(i-1)} + (\text{$\#$ of $\nearrow$ in $\mu^{(i)}$}) - (\text{$\#$ of $\nwarrow$ in $\mu^{(i)}$})$,
where $\sum \mathbf{k}^{(i)}$ means the summation of entries of $\mathbf{k}^{(i)}$ and we define $\sum \mathbf{k}^{(0)} = 0$.
\item Then, the weight of $T$ is $u^{\eta_u(T)} v^{\eta_v(T)} w^{\eta_w(T)} X_i^{\eta_{X_i}(T)}$.
\end{itemize}
As a result, the weighted enumeration of generalized monotone triangles with bottom row $\mathbf{k}$ is 
\[
	\prod_{i=1}^n \left( uX_i + vX_i^{-1} + w \right) \prod_{1 \leq p < q \leq n} \left( u E_{k_p} + v E_{k_q}^{-1} + w E_{k_p} E_{k_q}^{-1} \right) \tilde{s}_{(k_n,k_{n-1},\ldots,k_1)}(X_1,X_2,\ldots,X_n),
\]
where $E_x$ is a shift operator such that $E_x p(x)=p(x+1)$ and $\tilde{s}_{(k_n,k_{n-1},\ldots,k_1)}$ is a transformed Schur polynomial.
\end{theorem}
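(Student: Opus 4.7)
The plan is to prove the theorem by induction on $n$, exploiting the recursive decomposition $\text{GMT}(\mathbf{k}) = \bigsqcup_{\mu \in \text{AR}_n} \bigsqcup_{\mathbf{l} \in \mu(\mathbf{k})} \text{GMT}(\mathbf{l})$. For the base case $n=1$, a direct computation gives the weighted enumeration of $\text{GMT}((k_1)) = \text{AR}_1$ as $uX_1^{k_1+1} + vX_1^{k_1-1} + wX_1^{k_1} = X_1^{k_1}(uX_1 + vX_1^{-1} + w)$, which matches the right-hand side once one verifies $\tilde{s}_{(k_1)}(X_1) = X_1^{k_1}$ (the operator product over $p<q$ being empty).

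For the inductive step, I would split the weight of $T \in \text{GMT}(\mathbf{k})$ with top arrow row $\mu^{(n)} = \mu$ and second bottom row $\mathbf{k}^{(n-1)} = \mathbf{l}$ into a local factor $u^{a}v^{b}w^{c} X_n^{\sum\mathbf{k} - \sum\mathbf{l} + a - b}$, where $a, b, c$ count the occurrences of $\nearrow$, $\nwarrow$, $\nxarrow$ in $\mu$, times the weight of the sub-GMT with bottom row $\mathbf{l}$. Since $\eta_{X_i}$ for $i < n$ depends only on data inside the sub-GMT, the induction hypothesis identifies the inner weighted enumeration with the claimed formula for $n-1$. It then remains to show that summing the local factor against this expression, over $\mu \in \text{AR}_n$ and $\mathbf{l} \in \mu(\mathbf{k})$, reproduces the full formula for $n$.

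The last component $\mu_n$ of the arrow row cleanly produces the extra factor $uX_n + vX_n^{-1} + w$; the sum over the remaining $\mu_1, \ldots, \mu_{n-1}$ together with the sum over $l_i \in \sint{k_i + \delta_{\nearrow}(\mu_i)}{k_{i+1} - \delta_{\nwarrow}(\mu_{i+1})}$ should generate the operator factors $uE_{k_p} + vE_{k_n}^{-1} + wE_{k_p}E_{k_n}^{-1}$ for $p = 1, \ldots, n-1$, since the three choices for each $\mu_i$ correspond bijectively to the three summands of the operator (with the $\mathbf{l}$-summation implementing the shift from $\tilde{s}_{(l_{n-1}, \ldots, l_1)}$ back to an expression in $k_1, \ldots, k_n$).

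The main obstacle will be the Schur-polynomial identity arising at the end: one must verify that summing $\tilde{s}_{(l_{n-1}, \ldots, l_1)}(X_1, \ldots, X_{n-1})$ against the $X_n$-weights, with the accumulated shift operators applied, reproduces $\tilde{s}_{(k_n, \ldots, k_1)}(X_1, \ldots, X_n)$ up to the operator factors already accounted for. I expect this to follow from the Jacobi--Trudi determinantal formula combined with a branching-type manipulation, but the bookkeeping with half-open intervals and the three arrow types is where the technical weight sits, and where the full machinery of \cite{AF} is needed.
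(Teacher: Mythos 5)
The first thing to note is that the paper does not prove this theorem at all: it is imported verbatim (as ``Theorem 2.2 and 3.2 in [AF]'') from Fischer and Schreier-Aigner, and the paper's contribution in Subsection 5.6 is to \emph{reinterpret} the formula as a weighted enumeration of $\text{AR}_n \times \text{SGT}(\mathbf{k})$ and then to give a \emph{sijective} proof of the resulting identity (Corollary 5.25 and Construction 5.26), not to reprove the generating-function identity itself. So there is no in-paper proof to match your argument against; the relevant comparison is with [AF].

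As a proof sketch your induction is set up correctly: the base case computation is right, and the factorization of the weight of $T$ into a local factor $u^{a}v^{b}w^{c}X_n^{\sum\mathbf{k}-\sum\mathbf{l}+a-b}$ times the weight of the sub-GMT with bottom row $\mathbf{l}$ is legitimate, since $\eta_{X_i}$ for $i<n$ indeed only sees data below the top arrow row. The genuine gap is the final step, which is not a routine ``branching-type manipulation'' but is precisely the content of the theorem in [AF]. After applying the induction hypothesis you must show
\[
\sum_{\mu\in\text{AR}_n}\ \sum_{\mathbf{l}\in\mu(\mathbf{k})} u^{a}v^{b}w^{c}X_n^{\sum\mathbf{k}-\sum\mathbf{l}+a-b}\ \prod_{1\le p<q\le n-1}\left(uE_{l_p}+vE_{l_q}^{-1}+wE_{l_p}E_{l_q}^{-1}\right)\tilde{s}_{(l_{n-1},\ldots,l_1)}
\]
equals the $n$-row operator expression, and two things obstruct the clean factorization you describe. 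First, each interior arrow $\mu_i$ enters \emph{two} adjacent summation intervals (it shifts the left endpoint of the $i$-th interval via $\delta_{\nearrow}(\mu_i)$ and the right endpoint of the $(i-1)$-st via $\delta_{\nwarrow}(\mu_i)$), so the three choices of $\mu_i$ do not correspond termwise to the three summands of a single operator factor; in particular $\mu_n$ does not ``cleanly produce'' the factor $uX_n+vX_n^{-1}+w$, because it also moves the upper limit of the $l_{n-1}$-sum. Second, the inductive hypothesis delivers operators $E_{l_p}$ acting on the summation variables $\mathbf{l}$, and exchanging the weighted summation over $\mathbf{l}\in\mu(\mathbf{k})$ with these operators (to convert them into $E_{k_p}$ and to generate the new factors with $q=n$) requires a commutation identity for the summation operator against the antisymmetrized operator formula --- this is the key lemma of [AF], and your appeal to ``the full machinery of [AF]'' at exactly this point means the argument reduces the theorem to itself rather than proving it. To close the gap you would need to either prove that operator identity directly (e.g.\ via the Cauchy--Binet/Lindstr\"om-type expansion of $\tilde{s}$ and an explicit evaluation of the weighted sum $\sum_{l\in\sint{k+\varepsilon}{k'-\varepsilon'}}X_n^{-l}E_l^{m}$), or simply cite [AF] for the whole statement, as the paper does.
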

The relation between the transformed Schur polynomial $\tilde{s}$ and the extended Schur polynomial $s$ defined in \cite{AF} is
\[
	\tilde{s}_{(k_n,k_{n-1},\ldots,k_1)} = s_{(k_n-(n-1),k_{n-1}-(n-2),\ldots,k_1-0)}.
\]
This difference is the result of our change in convention from closed intervals to half-open ones.
For the definition of extended Schur polynomials, which we omit here as it is not necessary for this paper, 
please see \cite{AF}.
What is important in this paper is that $\tilde{s}{(k_n,k_{n-1},\ldots,k_1)}(X_1,X_2,\ldots,X_n)$ is the weighted enumeration of $\text{GT}(\mathbf{k})$ with appropriate weight.
In fact, for $A \insupp \text{GT}(\mathbf{k})$ we define ${\sum}_i(A) = \sum \eta_\text{row}(A)_i$, which means that ${\sum}_i(A)$ is the sum of elements in the $i$-th row when considering $A$ as a triangular array. Then, we can assign the weight as $\text{weight}(A) = X_i^{\eta_{X_i}(A)}$, where $ \eta_{X_i}(A) = {\sum}_i(A) - {\sum}_{i-1}(A)$.
Based on the observation and the definition of the shift operator, the part of 
$\prod_{1 \leq p < q \leq n} \left( u E_{k_p} + v E_{k_q}^{-1} + w E_{k_p} E_{k_q}^{-1} \right) \tilde{s}_{(k_n,k_{n-1},\ldots,k_1)}(X_1,X_2,\ldots,X_n)$
in the weighted enumeration of $\text{GMT}(\mathbf{k})$ is identified with a weighted enumeration of $\text{SGT}(\mathbf{k})$ with appropriate weight.
Furthermore, the remaining part is recognized as a weighted enumeration of $\text{AR}_n$ with appropriate weight.
To summarize the above discussion, the following corollary can be obtained.

\begin{corollary} \label{cor::gmt-ar-sgt}
There exists a sijection $\text{GMT}(\mathbf{k}) \sijto \text{AR}_n \times \text{SGT}(\mathbf{k})$ which is compatible with the following $n+3$ statistics
$\eta_u$, $\eta_v$, $\eta_w$, $\eta_{X_1}$, $\eta_{X_2}$, $\ldots$, $\eta_{X_n}$:
\begin{itemize}
\item For GMT side, the definitions of statistics are the same in Theorem \ref{formulae}.
\item For $(\mu,(A,T)) \insupp \text{AR}_n \times \text{SGT}(\mathbf{k})$, where $A \insupp \text{GT}(T(\mathbf{k}))$, we define its statistics as follows:
\begin{align*}
\eta_u((\mu,(A,T))) &= (\text{\# of $\nearrow$ in $\mu$}) + (\text{\# of $\swarrow$ in $T$}), \\
\eta_v((\mu,(A,T))) &= (\text{\# of $\nwarrow$ in $\mu$}) + (\text{\# of $\searrow$ in $T$}), \\
\eta_w((\mu,(A,T))) &= (\text{\# of $\nxarrow$ in $\mu$}) + (\text{\# of $\sxarrow$ in $T$}), \\
\eta_{X_i}((\mu,(A,T)) &= \eta_{X_i}(A) + \delta_\nearrow(\mu_i) - \delta_\nwarrow(\mu_i).
\end{align*}
\end{itemize}
\end{corollary}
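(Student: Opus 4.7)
The plan is to adapt the construction of $\Gamma \colon \text{GMT}(\mathbf{k}) \sijto \text{SGT}(\mathbf{k})$ from Construction \ref{Gamma} to the modified sign convention introduced at the start of this subsection, under which $\nxarrow$ and $\sxarrow$ carry positive signs. A careful reading of Constructions \ref{Phi1}, \ref{Phi3}, and \ref{Phi4} shows that only one step relies on a cancellation that is no longer available: the sijection $\varphi_{\text{AR}_1} \colon \text{AR}_1 \sijto (\{\cdot\},\emptyset)$ at the end of Construction \ref{Phi4}, which exploits the negative sign of $\nxarrow$ to cancel $\nearrow + \nxarrow$ against $\nwarrow$. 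Every other ingredient ($\rho$, $\pi$, $\tau$, the rearrangement bijection $\text{AR}_n \times \text{AP}_{n-1} \to \text{AR}_1 \times \text{AP}_n$, and the various sign-independent operations of disjoint union, Cartesian product, and forgetful identification) survives the change of convention unchanged. I will therefore replace $\Phi_4'$ by a variant $\tilde{\Phi}_4'$ in which the $\text{AR}_1$ factor is retained rather than cancelled, yielding $\tilde{\Phi}' := \tilde{\Phi}_4' \circ \Phi_3' \circ \Phi_1$ with codomain $\text{AR}_1 \times \text{SGT}(\mathbf{k})$.

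A sijection $\tilde{\Gamma}_n \colon \text{GMT}(\mathbf{k}) \sijto \text{AR}_n \times \text{SGT}(\mathbf{k})$ is then built by induction on $n$; the base case $n = 1$ is immediate. For the inductive step, using the canonical identification $\text{AR}_n = \text{AR}_{n-1} \times \text{AR}_1$, I compose
\begin{align*}
\text{GMT}(\mathbf{k}) &= \bigsqcup_{\mu^{(n)} \in \text{AR}_n} \bigsqcup_{\mathbf{l}} \text{GMT}(\mathbf{l}) \overset{\mathrm{IH}}{\sijto} \text{AR}_{n-1} \times \bigsqcup_{\mu^{(n)} \in \text{AR}_n} \bigsqcup_{\mathbf{l}} \text{SGT}(\mathbf{l}) \\
&\overset{\mathrm{id} \times \tilde{\Phi}'}{\sijto} \text{AR}_{n-1} \times \text{AR}_1 \times \text{SGT}(\mathbf{k}) = \text{AR}_n \times \text{SGT}(\mathbf{k}).
\end{align*}

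Compatibility of $\tilde{\Gamma}_n$ with $\eta_u, \eta_v, \eta_w$ will be established by tracing arrows through the construction: the rearrangement $\text{AR}_n \times \text{AP}_{n-1} \to \text{AR}_1 \times \text{AP}_n$ of Construction \ref{Phi4} either moves an arrow to a new position without altering it, or reverses it via the involution $r$ that sends $\nwarrow \leftrightarrow \searrow$, $\nearrow \leftrightarrow \swarrow$, $\nxarrow \leftrightarrow \sxarrow$; these three pairings are exactly the ones defining $\eta_u, \eta_v, \eta_w$ on $\text{AR}_n \times \text{SGT}(\mathbf{k})$. No other component of $\tilde{\Phi}'$, nor the induction step, modifies arrow counts. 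For each $\eta_{X_i}$, compatibility follows from the observation that $\eta_{X_i}$ is determined by $\eta_{\text{row},i}$, $\eta_{\text{row},i-1}$, and the shifts carried by the arrows in $\text{AR}_n \sqcup \text{AP}_n$; Propositions \ref{prop::rho}, \ref{prop::pi}, and \ref{prop::tau} guarantee that $\rho$, $\pi$, $\tau$ preserve the row-sum data, while the shifts contributed by $c_j(T')$ of the $\text{AP}_n$-part together with the $\delta_\nearrow(\mu_i) - \delta_\nwarrow(\mu_i)$ contribution of the extracted $\text{AR}_1$ match the $X_i$-degree differences between consecutive rows of the GMT.

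The main obstacle I anticipate is the bookkeeping for the $\eta_{X_i}$ statistics across the induction: tracking which arrow of $\text{AR}_n \times \text{AP}_{n-1}$ ends up where in $\text{AR}_1 \times \text{AP}_n$, together with the shift it induces, and verifying that the contributions assemble correctly after $n$ rounds of the recursion. This is essentially the combinatorial content of the operator formula underlying Theorem \ref{formulae}, so once the construction is in place the verification should reduce to a routine induction in the same spirit as the proof of Proposition \ref{prop::MR-SGT}.
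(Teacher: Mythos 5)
Your construction is the same as the paper's (Construction \ref{construction::asm-sgt}): keep $\Phi_1$ and $\Phi_3'$ unchanged under the new sign convention, replace $\Phi_4'$ by a variant that retains the $\text{AR}_1$ factor produced by the rearrangement bijection $\text{AR}_n\times\text{AP}_{n-1}\to\text{AR}_1\times\text{AP}_n$ instead of cancelling it via $\varphi_{\text{AR}_1}$, and assemble $\text{GMT}(\mathbf{k})\sijto\text{AR}_{n-1}\times\text{AR}_1\times\text{SGT}(\mathbf{k})$ by induction. Your verification of $\eta_u,\eta_v,\eta_w$ (arrows are only permuted or reversed by $r$, and $r$ pairs $\nearrow\leftrightarrow\swarrow$, $\nwarrow\leftrightarrow\searrow$, $\nxarrow\leftrightarrow\sxarrow$ exactly as the statistics require) and of $\eta_{X_1},\ldots,\eta_{X_{n-1}}$ via the row statistics also matches the paper.

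The one place you diverge is $\eta_{X_n}$, and the "routine induction" you anticipate there is harder than you suggest: Proposition \ref{prop::tau} gives compatibility of $\tau$ only with $\eta_{\text{row},1},\ldots,\eta_{\text{row},n-1}$, not with $\eta_{\text{row},n}$, since $\tau$ alters the bottom row. So the row-sum-preservation argument that handles $i<n$ does not extend to $i=n$, and a direct verification would force you to track how the change in the bottom-row sum is compensated by the arrow shifts through $\Phi_1$, $\Phi_3'$ and the modified $\Phi_4'$. The paper sidesteps this entirely with a conservation law: on both sides one checks that $\sum_{i=1}^n \eta_{X_i}-\eta_u+\eta_v=\sum\mathbf{k}$ (on the SGT side this uses ${\sum}_n A=\sum T(\mathbf{k})$ and the definition of $T(\mathbf{k})$), so compatibility with $\eta_{X_n}$ follows formally from the already-established compatibility with $\eta_u$, $\eta_v$ and $\eta_{X_1},\ldots,\eta_{X_{n-1}}$. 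You should either adopt this identity or be prepared for substantially more bookkeeping in the $i=n$ case than in the others.
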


The rest of this subsection is devoted to actually constructing this sijection.
\begin{construction} \label{construction::asm-sgt}
We shall construct a sijection which satisfies the conditions described in Corollary \ref{cor::gmt-ar-sgt}.
The construction is by induction on $n$.
If $n=1$, it is trivial since we have $\text{GMT}(\mathbf{k})=\text{AR}_1$ and $\text{SGT}(\mathbf{k}) = \text{GT}(\mathbf{k}) = (\{k\},\emptyset)$.
If $n>1$, it is given as follows:
\begin{align*}
\text{GMT}(\mathbf{k}) =& \bigsqcup_{\mu \in \textrm{AR}_n} \bigsqcup_{\mathbf{l} \in \mu(\mathbf{k})} \textrm{GMT}(\mathbf{l}) \\
\sijto& \bigsqcup_{\mu \in \textrm{AR}_n} \bigsqcup_{\mathbf{l} \in \mu(\mathbf{k})} \left( \text{AR}_{n-1} \times \text{SGT}(\mathbf{l}) \right) \\
=& \text{AR}_{n-1} \times \bigsqcup_{\mu \in \textrm{AR}_n} \bigsqcup_{\mathbf{l} \in \mu(\mathbf{k})} \text{SGT}(\mathbf{l}) \\
\overset{\text{id}_{\text{AR}_{n-1}} \times \Phi_1}{\sijto}& \text{AR}_{n-1} \times \bigsqcup_{\mu \in \text{AR}_n} \bigsqcup_{T \in \text{AP}_{n-1}} \bigsqcup_{\mathbf{m} \in S_1 \times S_2 \times \cdots S_{n-1}} \\
&\qquad\qquad\qquad\text{GT}(m_1+c_1(T),m_2+c_2(T),\ldots,m_{n-1}+c_{n-1}(T),x) \\
\overset{\text{id}_{\text{AR}_{n-1}} \times \Phi_3'}{\sijto}& \text{AR}_{n-1} \times \bigsqcup_{i=1}^n \bigsqcup_{\mu \in \text{AR}_n} \bigsqcup_{T \in \text{AP}_{n-1}} \\
&\qquad\qquad\text{GT}(m_1(\mu,T,0),\ldots,m_{i-1}(\mu,T,0), x,m_i(\mu,T,1),\ldots,m_{n-1}(\mu,T,1)) \\
\overset{\Phi_4''}{\sijto}& \text{AR}_{n-1} \times \text{AR}_1 \times \text{SGT}(\mathbf{k}) \\
=& \text{AR}_n \times \text{SGT}(\mathbf{k}),
\end{align*}
where the assumption of induction is used in the second line.
For the definition of $\Phi_1$, $\Phi_3'$, $S_i$ and $m_i$, refer to the Subsection \ref{ASM-SGT::KR}.
In fact, as the constructions of $\Phi_1$ and $\Phi_3'$ do not involve the cancellations $\nearrow\, \leftrightarrow \nxarrow$ and $\swarrow \leftrightarrow \sxarrow$,
these sijections can be applied directly in the current case.
On the other hand, since $\varphi_{\text{AR}_1}$ does involve such cancellations in the construction of $\Phi_4'$, this construction should be modified.
However, the modification is not difficult.
Using the bijection $\text{AR}_n \times \text{AP}_{n-1} \to \text{AR}_1 \times \text{AP}_n$ constructed in Construction \ref{Phi4} instead of $\Psi_{n,i}$, we obtain
\begin{align*}
	&\bigsqcup_{i=1}^n \bigsqcup_{\mu \in \text{AR}_n} \bigsqcup_{T \in \text{AP}_{n-1}} 
	\text{GT}(m_1(\mu,T,0),\ldots,m_{i-1}(\mu,T,0), x,m_i(\mu,T,1),\ldots,m_{n-1}(\mu,T,1)) \\
	&\quad \sijto \bigsqcup_{i=1}^n \bigsqcup_{(\mu,T) \in \text{AR}_1 \times \text{AP}_{n}} \text{GT}(k_1+c_1(T),\ldots,k_{i-1}+c_{i-1}(T),x,k_{i+1}+c_{i+1}(T),\ldots,k_{n}+c_{n}(T)) \\
	&\qquad = \text{AR}_1 \times \bigsqcup_{i=1}^n \bigsqcup_{T \in \text{AP}_{n}} \text{GT}(k_1+c_1(T),\ldots,k_{i-1}+c_{i-1}(T),x,k_{i+1}+c_{i+1}(T),\ldots,k_{n}+c_{n}(T)),
\end{align*}
and the construction of $\Phi_4''$ can be completed as in Construction \ref{Phi4}.

Next, we will verify that the sijection satisfies the compatibility conditions in Corollary \ref{cor::gmt-ar-sgt}.
First, all the sijections used in the construction act on arrows as reversing and permuting, and hence the sijection is compatible with $\eta_u$, $\eta_v$ and $\eta_w$.
In addition, it is compatible with $\eta_{X_1},\eta_{X_2},\ldots,\eta_{X_{n-1}}$ for the same reason as Proposition \ref{prop::MR-SGT}.
Last, we will prove the sijection is compatible with $\eta_{X_n}$.
In fact, the value of $\sum_{i=1}^n \eta_{X_i} -\eta_u + \eta_v$ is const. Indeed,
\begin{itemize}
\item for $T \in \text{GMT}(\mathbf{k})$, we have
\[
	\left(\sum_{i=1}^n \eta_{X_i} -\eta_u + \eta_v\right)(T) = {\sum}_n T = \sum \mathbf{k},
\]
\item and for $ (\mu,(A,T)) \in \text{AR}_n \times \text{SGT}(\mathbf{k})$, we have
\begin{align*}
	\left(\sum_{i=1}^n \eta_{X_i} -\eta_u + \eta_v\right)((\mu,(A,T))) = {\sum}_n A - (\text{\# of $\swarrow$ in $T$}) + (\text{\# of $\searrow$ in $T$}).
\end{align*}
Here, we have ${\sum}_nA = \sum T(\mathbf{k})$ from the definition of $\text{SGT}(\mathbf{k})$. Thus, the value is equal to $\sum \mathbf{k}$, based on the definition of $T(\mathbf{k})$.
\end{itemize}
Therefore, the value of $$\eta_{X_n} = \left(\sum_{i=1}^n \eta_{X_i} -\eta_u + \eta_v\right) - \left(\sum_{i=1}^{n-1} \eta_{X_i} -\eta_u + \eta_v\right)
= \sum \mathbf{k} - \left(\sum_{i=1}^{n-1} \eta_{X_i} -\eta_u + \eta_v\right),$$ is preserved under the action of the sijection.
Thus, the sijection is compatible with all $n+3$ statistics in Corollary \ref{cor::gmt-ar-sgt}.
\qed
\end{construction}

\begin{remark}
In Corollary \ref{cor::gmt-ar-sgt}, we say there are $n+3$ statistics with which the sijection is compatible, but some of them are degenerate, as can be understood from the proof of the compatibility in Construction \ref{construction::asm-sgt}.
In addition to the relation $\sum_{i=1}^n \eta_{X_i} -\eta_u + \eta_v = \sum \mathbf{k}$, we have $\eta_u+\eta_v+\eta_w = \binom{n}{2}$.
Thus, there are at most $n+1$ independent statistics, where we say statistics are independent if there are no relations between them.
Note that the number of independent statistics is not essentially meaningful.
While it may be considered that the appropriateness of a family of statistics can be measured by how fine it is, we will not delve further into this as it is beyond the scope of this paper.
\end{remark}

\section{Conclusions}
In this paper, we introduce the notion of compatibility of sijections and give transparent proofs of basic properties relevant to signed sets, sijections and the notion of compatibility.
In addition, we explain some combinatorial results by means of compatibility.
Our main contributions are as follows.
\begin{itemize}
\item We find and describe in detail the canonical one-to-one correspondences between Gelfand-Tsetlin patterns with a bottom row and that with a permuted bottom row.
This leads us to a new computational proof of the signed enumeration of Gelfand-Tsetlin patterns.
In addition, inspired by this proof, we define a generalization of Gelfand-Tsetlin patterns and extend the signed enumeration to this generalization.
\item We extend the definition of inversion numbers for shifted Gelfand-Tsetlin patterns and construct a more natural sijection than the conventional one between (generalized) monotone triangles and shifted Gelfand-Tsetlin patterns, through the notion of compatibility with the inversion numbers.
\item We give a bijective proof for the refined enumeration of an extension of alternating sign matrices with $n+3$ statistics, first proved in \cite{AF}.
\end{itemize}

We would also like to discuss the possibility of future applications of the notion of compatibility.
In fact, we believe it could be applied to many topics in integrable combinatorics.
On the one hand we have many determinant formulae in this field. (For example, those derived from the Lindstrom-Gessel-Viennot lemma, the Yang-Baxter equation, etc.)
As mentioned in \cite{FK2}, the theory of signed sets and sijections is closely related to linear algebra.
Therefore, known computational proofs of determinant formulae can be translated into bijective proofs.
On the other hand we have many results about refined enumerations in this field.
We should import these results into the bijective proofs through the notion of compatibility,
since this may offer hints to construct more natural (or completely new) sijections, as in Section 5.
In addition, considering compatibility can lead us to novel combinatorial results, as in Section 4.

Besides the above, the knowledge of this paper could be applied to the results obtained in \cite{FK2}, which is the sequel of the paper we mainly based our paper upon.
This might lead to the discovery of novel relations between alternating sign matrices and descending plane partitions.

\section*{Acknowledgements}
I would like to thank my supervisor, Prof. Ralph Willox for helpful discussions and comments on the manuscript.
I would also like to thank Prof. Takafumi Mase for useful discussions.
The author is partially supported by FoPM, WINGS Program, the University of Tokyo and JSPS KAKENHI No.\ 23KJ0795.

\appendix
\section{The details of Construction \ref{pi}}
In this section, we will describe the details of the construction of $\pi$ in Construction \ref{pi}.
We construct $\pi$ from $\sigma$ for one smaller $n$.
For $x_1,x_2,x_3,x_4 \in \mathbb{Z}$, we have sijections compatible with the normal statistics
\begin{align*}
	\sint{x_1}{x_2} \times \sint{x_2}{x_3} &\sijto -\sint{x_2}{x_1} \times ( \sint{x_2}{x_1} \sqcup \sint{x_1}{x_3} ) \\
	&\sijto -\sint{x_2}{x_1} \times \sint{x_2}{x_1} \sqcup -\sint{x_2}{x_1} \times \sint{x_1}{x_3},
\end{align*}
\begin{align*}
	\sint{x_1}{x_2} \times \sint{x_2}{x_3} &\sijto ( \sint{x_1}{x_3} \sqcup \sint{x_3}{x_2} ) \times -\sint{x_3}{x_2} \\
	&\sijto -\sint{x_1}{x_3} \times \sint{x_3}{x_2} \sqcup -\sint{x_3}{x_2} \times \sint{x_3}{x_2}
\end{align*}
and
\begin{align*}
	\sint{x_1}{x_2} \times \sint{x_2}{x_3} \times \sint{x_3}{x_4}
	&\sijto (\sint{x_1}{x_3} \sqcup \sint{x_3}{x_2}) \times -\sint{x_3}{x_2} \times (\sint{x_3}{x_2} \sqcup \sint{x_2}{x_4}) \\
	&\sijto -\sint{x_1}{x_3} \times \sint{x_3}{x_2} \times \sint {x_3}{x_2} \sqcup -\sint{x_1}{x_3} \times \sint{x_3}{x_2} \times \sint {x_2}{x_4} \\
	&\qquad \qquad \sqcup -\sint{x_3}{x_2} \times \sint{x_3}{x_2} \times \sint {x_3}{x_2} \sqcup -\sint{x_3}{x_2} \times \sint{x_3}{x_2} \times \sint {x_2}{x_4}.
\end{align*}
When $i=1$, we have
\begin{align*}
\text{GT}(\mathbf{l}) =& \bigsqcup_{\mathbf{m} \in \sint{l_1}{l_2} \times \sint{l_2}{l_3} \times \cdots \times \sint{l_{n-1}}{l_n}} \text{GT}(\mathbf{m}) \\
	\overset{C. \ref{du_dist}}{\ \sijto\ }& \bigsqcup_{\mathbf{m} \in -\sint{l_2}{l_1} \times \sint{l_2}{l_1} \times \sint{l_3}{l_4} \times \cdots \times \sint{l_{n-1}}{l_n}} \text{GT}(\mathbf{m})
	\sqcup \bigsqcup_{\mathbf{m} \in -\sint{l_2}{l_1} \times \sint{l_1}{l_3} \times \sint{l_3}{l_4} \cdots \times \sint{l_{n-1}}{l_n}} \text{GT}(\mathbf{m}) \\
	\overset{\sigma \sqcup =}{\ \sijto\ }& (\emptyset,\emptyset) \sqcup - \bigsqcup_{\mathbf{m} \in -\sint{l_2}{l_1} \times \sint{l_1}{l_3} \times \sint{l_3}{l_4} \cdots \times \sint{l_{n-1}}{l_n}} \text{GT}(\mathbf{m}) \\
	=& -\text{GT}(l_2,l_1,l_3,\ldots,l_n).
\end{align*}
When $i=n-1$, we have
\begin{align*}
\text{GT}(\mathbf{l}) =& \bigsqcup_{\mathbf{m} \in \sint{l_1}{l_2} \times \sint{l_2}{l_3} \times \cdots \times \sint{l_{n-1}}{l_n}} \text{GT}(\mathbf{m}) \\
	\overset{C. \ref{du_dist}}{\ \sijto\ }& \bigsqcup_{\mathbf{m} \in \sint{l_1}{l_2} \times \cdots \times \sint{l_{n-3}}{l_{n-2}} \times -\sint{l_{n-2}}{l_n} \times \sint{l_n}{l_{n-1}}} \text{GT}(\mathbf{m})
	\sqcup \bigsqcup_{\mathbf{m} \in \sint{l_1}{l_2} \times \cdots \times \sint{l_{n-3}}{l_{n-2}} \times -\sint{l_n}{l_{n-1}} \times \sint{l_n}{l_{n-1}}} \text{GT}(\mathbf{m}) \\
	\overset{= \sqcup \sigma}{\ \sijto\ }&  -\bigsqcup_{\mathbf{m} \in \sint{l_1}{l_2} \times \cdots \times \sint{l_{n-3}}{l_{n-2}} \times \sint{l_{n-2}}{l_n} \times \sint{l_n}{l_{n-1}}} \text{GT}(\mathbf{m}) \sqcup (\emptyset,\emptyset) \\
	=& -\text{GT}(l_1,l_2,\ldots,l_{n-2},l_n,l_{n-1}).
\end{align*}
Otherwise, we have
\begin{align*}
\text{GT}(\mathbf{l}) =& \bigsqcup_{\mathbf{m} \in \sint{l_1}{l_2} \times \sint{l_2}{l_3} \times \cdots \times \sint{l_{n-1}}{l_n}} \text{GT}(\mathbf{m}) \\
	\overset{C. \ref{du_dist}}{\ \sijto\ }&
	\begin{multlined}[t]
	\bigsqcup_{\mathbf{m} \in \sint{l_1}{l_2} \times \cdots \times \sint{l_{i-2}}{l_{i-1}} \times -\sint{l_{i-1}}{l_{i+1}} \times \sint{l_{i+1}}{l_{i}} \times \sint{l_{i+1}}{l_{i}} \times \sint{l_{i+2}}{l_{i+3}} \times \cdots \times \sint{l_{n-1}}{l_{n}}} \text{GT}(\mathbf{m}) \\
	\sqcup \bigsqcup_{\mathbf{m} \in \sint{l_1}{l_2} \times \cdots \times \sint{l_{i-2}}{l_{i-1}} \times -\sint{l_{i-1}}{l_{i+1}} \times \sint{l_{i+1}}{l_{i}} \times \sint{l_{i}}{l_{i+2}} \times \sint{l_{i+2}}{l_{i+3}} \times \cdots \times \sint{l_{n-1}}{l_{n}}} \text{GT}(\mathbf{m}) \\
	\sqcup \bigsqcup_{\mathbf{m} \in \sint{l_1}{l_2} \times \cdots \times \sint{l_{i-2}}{l_{i-1}} \times -\sint{l_{i+1}}{l_{i}} \times \sint{l_{i+1}}{l_{i}} \times \sint{l_{i+1}}{l_{i}} \times \sint{l_{i+2}}{l_{i+3}} \times \cdots \times \sint{l_{n-1}}{l_{n}}} \text{GT}(\mathbf{m}) \\
	\sqcup \bigsqcup_{\mathbf{m} \in \sint{l_1}{l_2} \times \cdots \times \sint{l_{i-2}}{l_{i-1}} \times -\sint{l_{i+1}}{l_{i}} \times \sint{l_{i+1}}{l_{i}} \times \sint{l_{i}}{l_{i+2}} \times \sint{l_{i+2}}{l_{i+3}} \times \cdots \times \sint{l_{n-1}}{l_{n}}} \text{GT}(\mathbf{m})
	\end{multlined} \\
	\overset{\sigma}{\ \sijto\ }&  (\emptyset,\emptyset) \sqcup -\bigsqcup_{\mathbf{m} \in \sint{l_1}{l_2} \times \cdots \times \sint{l_{i-2}}{l_{i-1}} \times \sint{l_{i-1}}{l_{i+1}} \times \sint{l_{i+1}}{l_{i}} \times \sint{l_{i}}{l_{i+2}} \times \sint{l_{i+2}}{l_{i+3}} \times \cdots \times \sint{l_{n-1}}{l_{n}}} \text{GT}(\mathbf{m})
	\sqcup (\emptyset,\emptyset) \sqcup (\emptyset,\emptyset) \\
	=& -\text{GT}(l_1,l_2,\ldots,l_{i-1},l_{i+1},l_{i},l_{i+2},\ldots,l_n). \qed
\end{align*}


\end{document}